\def \hat{\widehat}
\def \tilde{\widetilde}
\def \bar{\overline}
\begin{document}



\section{Introduction}

There are many ways of specifying Markov processes, 
the most popular being as solutions of stochastic 
equations, as solutions of martingale problems, or in 
terms of solutions of the Kolmogorov forward equation 
(the Fokker-Planck equation or the master equation 
depending on context).  The solution of a stochastic 
equation explicitly gives a process while a solution 
of a martingale problem gives the distribution of a
process and a solution of a forward equation gives the 
one dimensional distributions of a process.
Typically, these approaches are 
equivalent (assuming that there is a stochastic equation 
formulation) in the sense that existence of a solution 
specified by one method implies existence of 
corresponding solutions to 
the other two (weak existence for the stochastic 
equation) and hence uniqueness for one method implies 
uniqueness for the other two (distributional uniqueness 
for the stochastic equation).

One approach to proving uniqueness for a forward 
equation and hence for the corresponding martingale 
problem is to verify a condition on the generator similar 
to the range condition of the Hille-Yosida theorem. (See 
Corollary \ref{rngunq}.)  
We show that the original generator $A$ of our martingale 
problem (or a restriction of the original generator $A$, 
in the case of martingale 
problems with boundary conditions) can be extended 
to a generator $\hat {A}$ such that every solution of 
the martingale problem for $A$ is a solution for $\hat {A}$ and $
\hat {A}$ 
satisfies the range condition of Corollary \ref{rngunq}.  
Our extension is constructed by including viscosity solutions of 
the resolvent equation 
\begin{equation}\lambda u-Au=h.\label{ireso}\end{equation}

Viscosity solutions have been used to study value 
functions in stochastic optimal 
control and optimal stopping theory since the very 
beginning (see the classical references \cite{CIL92}, \cite{Ph98},   
as well as \cite{FS06}). It may be interesting to note 
that, in the context of 
Hamilton-Jacobi equations, the idea of studying a parabolic 
equation by solving a resolvent 
equation in the viscosity sense appears already in \cite{CL83}, Section VI.3, 
where it is applied to a model problem. 
The methodology is also important for related 
problems in finance (for example \cite{ST00},  
\cite{BKR01}, \cite{KK04}, \cite{BCM10}, \cite{CPD12} and 
many others).

Viscosity solutions have also been used to study the 
partial differential equations associated with
forward-backward stochastic differential equations 
(\cite{MZ11},\cite{EKTZ14}) and in the theory of large deviations (\cite{FK06}). 

The basic data for our work is an operator 
$A\subset C_b(E)\times C_b(E)$ on a complete, separable metric space 
$E$.  We offer an abstract definition of viscosity 
sub/supersolution for (\ref{ireso}) (which for 
integro-differential operators in ${\bf R}^d$ is equivalent to the 
usual one) and prove, under very general conditions, that 
the martingale problem for $A$ has a unique solution if 
the comparison principle for $(\ref{ireso})$ holds.  

We believe the interest of this result is twofold: on one 
hand it clarifies the general connection between 
viscosity solutions and martingale problems; on the 
other, there are still many martingale problems, for 
instance in infinite dimension, for which 
uniqueness is an open question. 

We also discuss two alternative abstract definitions 
of viscosity sub/supersolution that might be especially 
useful in infinite dimensional spaces. 
All our proofs work under these alternative definitions 
as well.

The first alternative definition is a modification of 
a definition suggested to us by Nizar Touzi and used in 
\cite{EKTZ14}. Being a stronger definition (it allows 
for more test functions), it should be easier to prove 
comparison results under this definition. 

The second alternative definition appears in \cite{FK06} 
and is a stronger definition too.   
Under this definition, a sort of converse of our main 
result holds, namely if $h$ belongs to $\overline {{\cal R}(\lambda 
-A)}$ (under 
uniform convergence), then the comparison principle for 
semisolutions of (\ref{ireso}) holds (Theorem 
\ref{converse}).  When $E$ is compact, 
this definition is equivalent to our main definition, 
hence the comparison principle holds for semisolutions 
in that sense as well.  

Next we consider stochastic processes that must satisfy 
some boundary conditions, for example, reflecting 
diffusions.  Boundary conditions are expressed in terms 
of an operator $B$ which enters into the formulation of a 
{\em constrained martingale problem\/} (see \cite{Kur90}).  We 
restrict our attention to models in which the boundary 
term in the constrained martingale problem is expressed 
as an integral against a local time.  Then it still holds 
that uniqueness of the solution of the constrained 
martingale problem follows from the comparison 
principle between viscosity sub and supersolutions of 
(\ref{ireso}) with the appropriate boundary conditions.  
Notice that, as for the standard martingale problem, 
uniqueness for the constrained martingale problem 
implies that the solution is Markovian (see \cite{Kur90}, 
Proposition 2.6).  
  
In the presence of boundary conditions, even for 
${\bf R}^d$-valued diffusions, there are examples for which 
uniqueness of the martingale problem is not known.  
Processes in domains with boundaries that are only 
piecewise smooth or with boundary operators that are 
second order or with directions of reflection that are 
tangential on some part of the boundary continue to be a 
challenge.  In this last case, as an example of an application 
of our results, we use the comparison principle proved 
in \cite{PK05} to obtain uniqueness.  

The strategy of our proofs has been initially inspired by the proof 
of Krylov's selection theorem for martingale problems 
that appears in \cite{EK86} 
and originally appeared in unpublished work of 
\cite{GG77}. In that proof the generator is 
recursively extended in such a way that 
there are always solutions of the martingale problem
for the extended generator, but eventually only one. 
If uniqueness fails for the original martingale problem, 
there is more than one way 
to do the extension. Conversely if, at each stage of the 
recursion, there is only one 
way to do the extension and all solutions of the 
martingale problem for the original generator remain 
solutions for the extended generator, then 
uniqueness must hold for the original generator.

Analogously, assuming the comparison principle 
for (\ref{ireso}) (or (\ref{ireso}) with the appropriate 
boundary conditions) holds for a large enough class of 
functions $h$, we construct an 
extension $\hat {A}$ of the original operator $A$ 
(of a restriction of the original operator $A$, 
in the case of constrained martingale problems) 
such that all solutions of the martingale problem (the  
constrained martingale problem) for $A$ 
are solutions of the martingale problem for $\hat {A}$, and such 
that uniqueness holds for $\hat {A}$. Actually, in the case of 
ordinary martingale problems the extension, although 
possible, is not needed, because 
the comparison principle for $\mbox{\rm (\ref{ireso})}$ directly yields a condition 
($(\ref{lapid})$) that, if valid for a large enough class of 
functions $h$, ensures uniqueness of the one-dimensional distributions 
of solutions to the martingale problem, and hence 
uniqueness of the solution. The extension is needed, 
instead, for constrained martingale problems. 

A few works on viscosity solutions of partial 
differential equations 
and weak solutions of stochastic differential equations 
have appeared in recent years. 
For diffusions in ${\bf R}^d$, \cite{BS12}, assuming a comparison 
principle exists, 
show that the backward equation has a
unique viscosity solution, and it follows that the 
corresponding stochastic differential equation has a 
unique weak solution. For Markovian forward-backward 
stochastic differential equations, \cite{MZ11} also derive 
uniqueness of the weak solution from existence of a 
comparison principle for the corresponding partial 
differential equation. In the non-Markovian case, the 
associated partial differential equation becomes path 
dependent. \cite{EKTZ14} propose the notion of viscosity 
solution of (semilinear) path dependent partial 
differential equations on the space of 
continuous paths already mentioned above and prove a comparison principle.  

The rest of this paper is organized as follows: Section 
\ref{backgrnd} contains some background material on 
martingale problems and on viscosity solutions; Section 
\ref{mgpunq} deals with martingale problems; 
the alternative definitions of viscosity solution are 
discussed in Section \ref{altdef}; 
Section \ref{cmgpunq} deals with martingale 
problems with boundary conditions; finally in Section 
\ref{examples}, we present two examples, including the 
application to diffusions with degenerate direction of reflection.

\section{Background}\label{backgrnd}

\subsection{Martingale problems}

Throughout this paper we will assume that $(E,r)$ is a 
complete separable metric space, $D_E[0,\infty )$ is the space of 
cadlag, $E$-valued functions endowed with the Skorohod topology, 
${\cal B}(D_E[0,\infty )))$ is the $\sigma$-algebra of Borel sets of $
D_E[0,\infty )$, 
$C_b(E)$ denotes the space of bounded, continuous 
functions on $(E,r)$,  $B(E)$ denotes the space of 
bounded, measurable functions on $(E,r)$,
 and ${\cal P}(E)$ denotes the space of 
probability measures on $(E,r)$. $||\cdot ||$ will denote the 
supremum norm on $C_b(E)$ or $B(E)$.

\begin{definition}\label{mgp}
A measurable 
stochastic process $X$ defined on a probability 
space $(\Omega ,{\cal F},{\mathbb P})$ is a solution of 
the {\em martingale problem\/} for
\[A\subset B(E)\times B(E),\]
provided there exists a filtration $\{{\cal F}_t\}$ such that $X$ and 
$\int_0^{\dot{}}g(X(s))ds$ are $\{{\cal F}_t\}$-adapted, for every $
g\in B(E)$, and 
\begin{equation}M_f(t)=f(X(t))-f(X(0))-\int_0^tg(X(s))ds\label{mg}\end{equation}
is a $\{{\cal F}_t\}$-martingale for each $(f,g)\in A$.  If $X(0)$ has distribution $
\mu$, 
we say $X$ is a solution of the martingale problem for $(A,\mu )$.
\end{definition}\ 

\begin{remark}
Because linear combinations of martingales are martingales, 
without loss of generality, we can, but need not, 
assume that $A$ is linear and that $(1,0)\in A$.

We do not, and cannot for our purposes, require $A$ to be 
single valued.  In particular, the operator $\hat {A}$ defined in 
the proof of Theorem \ref{cmgpb} will typically not be 
single valued.
\end{remark}\medskip

In the next sections 
we will restrict our attention to 
processes $X$ with sample paths in $D_E[0,\infty )$. 

\begin{definition}\label{Dmgp}
A stochastic process $X$, defined on a probability 
space $(\Omega ,{\cal F},{\mathbb P})$, with sample paths in $D_E[0,
\infty )$ is a solution of 
the {\em martingale problem\/} for
\[A\subset B(E)\times B(E),\]
{\em in} $D_E[0,\infty )$ provided there exists a filtration $\{{\cal F}_
t\}$ such 
that $X$ is $\{{\cal F}_t\}$-adapted and 
\begin{equation}M_f(t)=f(X(t))-f(X(0))-\int_0^tg(X(s))ds\label{Dmg}\end{equation}
is a $\{{\cal F}_t\}$-martingale for each $(f,g)\in A$.  If $X(0)$ has distribution $
\mu$, 
we say $X$ is a solution of the martingale problem for 
$(A,\mu )$ in $D_E[0,\infty )$.
\end{definition}\ 

\begin{remark}
Since $X$ has sample paths in $D_E[0,\infty )$, the fact that 
it is $\{{\cal F}_t\}$-adapted implies that 
$\int_0^{\dot{}}g(X(s))ds$ is $\{{\cal F}_t\}$-adapted, for every $
g\in B(E)$. 
\end{remark}\medskip

\begin{remark}\label{cadlag}
The requirement that $X$ have sample paths in $D_E[0,\infty )$ is 
usually fulfilled provided the state space $E$ is selected 
appropriately.
Moreover, if $A\subset C_b(E)\times C_b(E)$, ${\cal D}(A)$ is dense in $
C_b(E)$ in the topology of 
uniform convergence on compact sets and 
for each compact $K\subset E$, $\varepsilon >0$, and $T>0$, there 
exists a compact $K'\subset E$ such that 
\[P\{X(t)\in K',t\leq T,X(0)\in K\}\geq (1-\varepsilon )P\{X(0)\in 
K\},\]
for every progressive process such that (\ref{mg}) is a martingale, 
then every such process  
has a modification with sample paths in $D_E[0,\infty )$ (See 
Theorem 4.3.6 of \cite{EK86}.) 
\end{remark}\medskip

\begin{remark}\label{fdd}
The martingale property is equivalent to the 
requirement that
\[E[(f(X(t+r))-f(X(t))-\int_t^{t+r}g(X(s))ds)\prod_ih_i(X(t_i))]=
0\]
for all choices of $(f,g)\in A$, $h_i\in B(E)$, $t,r\geq 0$, and 
$0\leq t_i\leq t$.  Consequently, the property of being a solution 
of a martingale problem is a property of the finite-
dimensional distributions of $X$. 

In particular, for the martingale problem in $D_E[0,\infty )$, 
the property of being a solution is a property of the 
distribution of $X$ on $D_E[0,\infty )$. 
Much of what follows in the next sections will be 
formulated in terms of the collection $\Pi\subset {\cal P}(D_E[0,
\infty ))$ of 
distributions of solutions of the martingale problem. For 
some purposes, it will be convenient to assume that $X$ is 
the canonical process defined on 
$(\Omega ,{\cal F},\mathbb{P})=(D_E[0,\infty ),{\cal B}(D_E[0,\infty 
))),P)$, for some 
$P\in\Pi$. 
\end{remark}\medskip

In view of Remark \ref{fdd} it is clear that 
uniqueness of the solution to the martingale problem for 
an operator $A$ is to be meant as uniqueness of the 
finite-dimensional distributions. 

\begin{definition}\label{uniq}
We say that {\em uniqueness holds for the martingale }
{\em problem for} $A$  if, for every $
\mu$, any two solutions of the martingale 
problem for $(A,\mu )$  have the same finite-dimensional 
distributions. If we restrict our attention to solutions in $D_E[0,\infty)$, then uniqueness holds if any two solutions for $(A,\mu)$ have the same distribution on $D_E[0,\infty)$
\end{definition}

One of the most important consequences of the 
martingale approach to Markov processes is that 
uniqueness of one-dimensional distributions implies 
uniqueness of finite-dimensional distributions.

\begin{theorem}\label{1dfid}
Suppose that for each $\mu\in {\cal P}(E)$, 
any two solutions of the martingale 
problem for $(A,\mu )$  have the same one-dimensional 
distributions.  Then any two solutions have the same 
finite-dimensional distributions. If any two solutions of the martingale 
problem for $(A,\mu )$  in $D_E[0,\infty)$ have the same one-dimensional 
distributions, then they have the same 
distribution on $D_E[0,\infty)$.
\end{theorem}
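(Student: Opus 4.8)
The plan is to argue by induction on the number of time points, peeling off the first one at each step by a regular‑conditional‑distribution argument that reduces it to uniqueness of one‑dimensional distributions. Precisely, I would establish by induction on $n\ge 1$ the statement $(U_n)$: for every $\nu\in\mathcal P(E)$, any two solutions of the martingale problem for $(A,\nu)$ have the same joint law of $(X(s_1),\dots,X(s_n))$ for all $0\le s_1<\cdots<s_n$; and, run in parallel, the same statement with ``solution'' replaced by ``solution in $D_E[0,\infty)$''. The base case $(U_1)$ is exactly the hypothesis.

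The technical heart of the argument — and the step I expect to be the main obstacle — is a standard conditioning lemma (see the proof of Theorem~4.4.2 in \cite{EK86}): if $X$ solves the martingale problem for $(A,\nu)$ relative to $\{\mathcal F_t\}$ and $t\ge 0$ is fixed, then the shifted process $\{X(t+u):u\ge 0\}$ admits a regular conditional distribution $\omega\mapsto Q_\omega$ given $\mathcal F_t$ (on the appropriate path space) such that, for $\mathbb P$‑a.e.\ $\omega$, $Q_\omega$ is the law of a solution of the martingale problem for $(A,\delta_{X(t,\omega)})$, and $\omega\mapsto Q_\omega$ is measurable. Separability of $E$ is what is needed here: it gives existence of regular conditional distributions and the measurability (hence Borel measurability in $x$ of the resulting one‑point transition mechanism), while preservation of the martingale property under the conditioning uses that $M_f(t+u)-M_f(t)$ is a $\{\mathcal F_{t+u}\}$‑martingale together with standard properties of regular conditional probabilities. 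In the cadlag setting the same holds with $Q_\omega\in\mathcal P(D_E[0,\infty))$, since $D_E[0,\infty)$ is Polish and is carried into itself by the time shift.

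Granting the lemma, the inductive step $(U_n)\Rightarrow(U_{n+1})$ goes as follows. Fix $0\le s_1<\cdots<s_{n+1}$ and bounded measurable $f_1,\dots,f_{n+1}$; for a solution $X$ of $(A,\nu)$,
\[
E\Big[\prod_{i=1}^{n+1}f_i(X(s_i))\Big]
=E\Big[f_1(X(s_1))\,E\Big[\prod_{i=2}^{n+1}f_i(X(s_i))\,\Big|\,\mathcal F_{s_1}\Big]\Big].
\]
By the lemma applied at $t=s_1$, the inner conditional expectation equals $\Psi(X(s_1))$, where $\Psi(x)$ is the common value of $E[\prod_{i=2}^{n+1}f_i(Y(s_i-s_1))]$ over solutions $Y$ of the martingale problem for $(A,\delta_x)$ — independence of the choice of solution is exactly $(U_n)$ applied to the $n$ times $0\le s_2-s_1<\cdots<s_{n+1}-s_1$ with initial law $\delta_x$, and $\Psi$ is Borel measurable by the measurability of $\omega\mapsto Q_\omega$. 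Therefore $E[\prod_{i=1}^{n+1}f_i(X(s_i))]=\int_E f_1(x)\,\Psi(x)\,\mathbb P(X(s_1)\in dx)$, and $\mathbb P(X(s_1)\in\cdot)$ is determined by $A$ and $\nu$ by $(U_1)$. Hence the $(n+1)$‑point distribution is determined by $A$ and $\nu$, which is $(U_{n+1})$; the identical computation, performed entirely within the class of $D_E[0,\infty)$‑solutions, gives its cadlag version.

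The first assertion of the theorem is now $(U_n)$ for all $n$. For the second, recall that $\mathcal B(D_E[0,\infty))$ is generated by the coordinate maps $x\mapsto x(t)$ (see \cite{EK86}), so two probability measures on $D_E[0,\infty)$ agreeing on all finite‑dimensional distributions coincide; combined with the cadlag version of $(U_n)$ for all $n$, this shows any two solutions of the martingale problem for $(A,\mu)$ in $D_E[0,\infty)$ have the same law on $D_E[0,\infty)$.
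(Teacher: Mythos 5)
Your overall strategy (induction on the number of time points, reducing to the one-dimensional hypothesis by conditioning at the first time) is the right shape, but it is not the argument the paper is leaning on, and as written it has a genuine gap. The paper cites Theorem 4.4.2 and Corollary 4.4.3 of \cite{EK86}, and that proof does \emph{not} use regular conditional distributions: it uses the tilting device that appears in this paper as Lemma \ref{rnlem}. For positive $h_1,\dots,h_n\in B(E)$ one sets $H=\prod_i h_i(X(t_i))$ and observes that under $d\mathbb{P}^H=H\,d\mathbb{P}/E[H]$ the shifted process $X(t_n+\cdot)$ is again a solution of the martingale problem, with initial law $\Gamma\mapsto E[H\,{\bf 1}_\Gamma(X(t_n))]/E[H]$; the induction hypothesis makes these data the same for the two solutions, and the one-dimensional uniqueness then gives $E[Hf(X(t_n+s))]=E[Hf(Y(t_n+s))]$, which is the $(n{+}1)$-point statement. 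Crucially, that argument needs the martingale property for only one $(f,g)\in A$ at a time, and under $\mathbb{P}^H$ it holds outright by optional sampling, not merely off a null set.

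The gap in your version sits exactly in the ``conditioning lemma'' you identify as the technical heart. For each fixed $(f,g)\in A$, fixed pair of times and finitely many test functions, the martingale identity for $Q_\omega$ holds off a $\mathbb{P}$-null set; to conclude that $Q_\omega$ is a.s.\ the law of a solution of the martingale problem for $(A,\delta_{X(t,\omega)})$ --- which is what you need in order to apply $(U_n)$ to $Q_\omega$ --- you must produce a single null set that works for \emph{all} $(f,g)\in A$ simultaneously. Separability of $E$ gives existence of the regular conditional distribution but says nothing about the cardinality of $A$, which in this theorem is an arbitrary subset of $B(E)\times B(E)$ and need not be countable or separable in any useful sense; moreover, in the first (non-cadlag) half of the statement there is no Polish path space on which to disintegrate in the first place. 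Two secondary points: the Borel measurability of $\Psi$ on $E$ does not follow from measurability of $\omega\mapsto Q_\omega$ (you get an $\mathcal{F}_{s_1}$-measurable random variable that factors set-theoretically through $X(s_1)$, and you need a Doob--Dynkin step to replace $\Psi$ by a Borel version agreeing $\mu_{s_1}$-a.e., with $\mu_{s_1}$ common to both solutions); and your attribution of the conditioning lemma to the proof of Theorem 4.4.2 of \cite{EK86} is inaccurate, since that proof is structured precisely to avoid it. Replacing the regular conditional distribution by the $H$-tilting of Lemma \ref{rnlem} repairs all of this at once and yields the theorem in the stated generality.
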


\begin{proof}
This is a classical result. See for instance Theorem 4.4.2 
and Corollary 4.4.3 of \cite{EK86}.
\end{proof}

For $\mu\in {\cal P}(E)$ and $f\in B(E)$ we will use the notation 
\begin{equation}\mu f=\int_Ef(x)\mu (dx).\label{notat-i}\end{equation}

\begin{lemma}\label{resid}Let $X$ be a $\{{\cal F}_t\}$-adapted stochastic process 
with sample paths in $D_E[0,\infty )$, with initial distribution 
$\mu$, $f,g\in B(E)$ and $\lambda >0$. Then $(\mbox{\rm \ref{Dmg}}
)$ is 
a $\{{\cal F}_t\}$-martingale if and only if 
\begin{equation}M_f^{\lambda}(t)=e^{-\lambda t}f(X(t))-f(X(0))+\int_0^te^{-\lambda 
s}(\lambda f(X(s))-g(X(s))ds\label{resmg}\end{equation}
is a $\{{\cal F}_t\}$-martingale. In particular, if $(\mbox{\rm \ref{Dmg}}
)$ is a $\{{\cal F}_t\}$-martingale 
\begin{equation}\mu f=E[\int_0^{\infty}e^{-\lambda s}(\lambda f(X
(s))-g(X(s)))ds].\label{inrg}\end{equation}
\end{lemma}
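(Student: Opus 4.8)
The plan is to prove Lemma \ref{resid} by an integration-by-parts / stochastic-calculus-free argument based on the martingale property itself. First I would observe that \eqref{resmg} can be rewritten using the deterministic identity $e^{-\lambda t}=1-\lambda\int_0^t e^{-\lambda s}\,ds$. Writing $M_f$ for the process in \eqref{Dmg}, I would try to express $M_f^\lambda(t)$ as a linear combination of $M_f(t)$ and a ``time-averaged'' version $\int_0^t \lambda e^{-\lambda s} M_f(s)\,ds$, so that one direction of the equivalence follows because the averaged integral of a martingale against a nice deterministic kernel is again a martingale (by Fubini for conditional expectations), and the converse follows by inverting the same linear relation.

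Concretely, the key computation is to check the pathwise identity
\begin{equation}
M_f^\lambda(t) = e^{-\lambda t}M_f(t) + \lambda\int_0^t e^{-\lambda s}M_f(s)\,ds,
\label{idplan}
\end{equation}
which I would verify by plugging in the definition $M_f(s)=f(X(s))-f(X(0))-\int_0^s g(X(u))\,du$ on the right-hand side, using $e^{-\lambda t}=1-\lambda\int_0^t e^{-\lambda s}ds$ on the $f(X(0))$ term, and applying the elementary Fubini identity $\int_0^t e^{-\lambda s}\int_0^s g(X(u))\,du\,ds = \int_0^t g(X(u))\big(\int_u^t e^{-\lambda s}ds\big)du$ together with $\int_u^t e^{-\lambda s}ds = \lambda^{-1}(e^{-\lambda u}-e^{-\lambda t})$. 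Collecting terms should reproduce exactly \eqref{resmg}. Once \eqref{idplan} holds, if $M_f$ is an $\{{\cal F}_t\}$-martingale then $e^{-\lambda t}M_f(t)$ has constant expectation and, by the conditional Fubini theorem, $\int_0^t e^{-\lambda s}M_f(s)\,ds$ is an $\{{\cal F}_t\}$-martingale (it is adapted, integrable since $M_f$ has at most linear-in-time integrability bounds from $f,g\in B(E)$, and its conditional increments vanish because $E[M_f(s)\mid {\cal F}_r]=M_f(r)$ for $s\ge r$ and we split the integral at $r$); hence $M_f^\lambda$ is a martingale. For the converse, I would note the relation can be inverted: from \eqref{idplan}, $e^{\lambda t}M_f^\lambda(t)$ differs from $M_f(t)$ by $\lambda\int_0^t e^{\lambda(t-s)}(\text{something})$, or more cleanly, solve the linear ODE-type relation $M_f(t) = e^{\lambda t}M_f^\lambda(t) - \lambda\int_0^t e^{\lambda t}e^{-\lambda s}M_f^\lambda(s)\,ds$ wait --- I would instead observe directly that \eqref{idplan} expresses $M_f^\lambda$ in terms of $M_f$ via an invertible linear (Volterra) map with continuous kernel, so that $M_f$ is a martingale iff $M_f^\lambda$ is; alternatively one repeats the same averaging argument starting from $M_f^\lambda$.

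For the final assertion \eqref{inrg}, I would use that when \eqref{Dmg} is a martingale, so is $M_f^\lambda$, hence $E[M_f^\lambda(t)] = E[M_f^\lambda(0)] = 0$ for all $t$. Writing this out, $E[e^{-\lambda t}f(X(t))] - \mu f + E\big[\int_0^t e^{-\lambda s}(\lambda f(X(s))-g(X(s)))\,ds\big] = 0$. Since $f$ is bounded, $E[e^{-\lambda t}f(X(t))]\to 0$ as $t\to\infty$, and since $\lambda f - g$ is bounded the integrand is dominated by $\|\lambda f-g\|e^{-\lambda s}$, which is integrable on $[0,\infty)$, so by dominated convergence the expectation of the integral converges to $E[\int_0^\infty e^{-\lambda s}(\lambda f(X(s))-g(X(s)))\,ds]$. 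Letting $t\to\infty$ yields \eqref{inrg}.

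The main obstacle is purely bookkeeping: getting the constants and signs in the pathwise identity \eqref{idplan} exactly right, and making sure the integrability and measurability hypotheses (which follow from $f,g\in B(E)$, $X$ cadlag and adapted, as noted in the remark after Definition \ref{Dmgp}) are genuinely enough to apply the conditional Fubini theorem to $\int_0^t e^{-\lambda s}M_f(s)\,ds$. There is no deep difficulty here — no stochastic integration is needed, only linearity of conditional expectation and ordinary Fubini — so the proof should be short.
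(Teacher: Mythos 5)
Your proof is correct, and it takes a genuinely different route from the paper. The paper disposes of the general equivalence by citing Lemma 4.3.2 of \cite{EK86} and then, for continuous $f$, gives an It\^o's-formula argument: it integrates $e^{-\lambda s}$ against the martingale $M_f$ to get one direction and applies It\^o's formula to $f(X(t))=e^{\lambda t}\bigl(e^{-\lambda t}f(X(t))\bigr)$ for the converse. You instead establish the purely pathwise identity $M_f^{\lambda}(t)=e^{-\lambda t}M_f(t)+\lambda\int_0^te^{-\lambda s}M_f(s)\,ds$ (which does check out: the $f(X(0))$ terms combine via $e^{-\lambda t}+\lambda\int_0^te^{-\lambda s}ds=1$, and the ordinary Fubini computation collapses the two $g$-integrals to $-\int_0^te^{-\lambda s}g(X(s))ds$), and then use only the conditional Fubini theorem -- legitimate here because $M_f$ is jointly measurable and bounded on compact time intervals since $f,g\in B(E)$ and $X$ is cadlag and adapted -- to transfer the martingale property; the converse follows from the mirror identity $M_f(t)=e^{\lambda t}M_f^{\lambda}(t)-\lambda\int_0^te^{\lambda s}M_f^{\lambda}(s)\,ds$, which one verifies by the same computation with $\lambda$ replaced by $-\lambda$ (your ``repeat the averaging argument starting from $M_f^{\lambda}$''). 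This is essentially the proof of the cited EK86 lemma made explicit, and it buys you two things: it needs no stochastic integration and no continuity of $f$, so it covers the stated hypothesis $f,g\in B(E)$ directly, whereas the paper's It\^o argument is only given for continuous $f$. Your treatment of (\ref{inrg}) -- take expectations, use boundedness of $f$ to kill $E[e^{-\lambda t}f(X(t))]$ and dominated convergence on the integral as $t\to\infty$ -- coincides with the paper's. The only blemish is the mid-sentence ``wait --- I would instead observe'' hedge about the inversion; the inversion is in fact clean and explicit, so you should simply state and verify the inverse identity rather than appeal vaguely to invertibility of a Volterra operator.
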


\begin{proof}\ The general statement is a special case of Lemma 
4.3.2 of \cite{EK86}. If $f$ is  
continuous, as will typically be the case in the next sections, $M_f$ will be cadlag and
we can apply It\^o's formula to obtain 
\[e^{-\lambda t}f(X(t))-f(X(0))=\int_0^t(-f(X(s))\lambda e^{-\lambda 
s}+e^{-\lambda s}g(X(s)))ds+\int_0^te^{-\lambda s}dM_f(s),\]
where the last term on right is a 
$\{{\cal F}_t\}$-martingale. (Note that, since all the processes involved are cadlag, 
we do not need to require the filtration $\{{\cal F}_
t\}$ 
to satisfy the `usual conditions'.) 
Conversely, if $(\mbox{\rm \ref{resmg}})$ is a 
$\{{\cal F}_t\}$-martingale, the assertion follows by applying It\^o's 
formula to $f(X(t))=e^{\lambda t}\Big(e^{-\lambda t}f(X(t))\Big)$. 

In particular, if $(\mbox{\rm \ref{Dmg}})$ is a $\{{\cal F}_t\}$-martingale 
\[E[f(X(0)]-E[e^{-\lambda t}f(X(t))]=E[\int_0^te^{-\lambda s}(\lambda 
f(X(s))-g(X(s))ds],\]
and the second statement follows by taking $t\rightarrow\infty$. 
\end{proof}

\begin{lemma}\label{rnlem}
Let $X$ be a solution of the martingale problem for $A\subset C_b(E)\times C_b(E)$ in 
$D_E[0,\infty )$ with respect to a filtration $\{{\cal F}_t\}$.
Let $\tau\geq 0$ be a finite $\{{\cal F}_t\}$-stopping time and 
$H\geq 0$ be a ${\cal F}_{\tau}$-measurable random variable such that 
$0<E[H]<\infty$. Define $P^{\tau ,H}$ by 
\begin{equation}P^{\tau ,H}(C)=\frac {E[H{\bf 1}_C(X(\tau +\cdot 
))]}{E[H]},\quad C\in {\cal B}(D_E[0,\infty )).\label{phdist}\end{equation}
Then $P^{\tau ,H}$ is the distribution of a solution of the martingale 
problem for $A$ in $D_E[0,\infty )$.
\end{lemma}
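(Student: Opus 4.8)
The plan is to show that $P^{\tau,H}$ solves the martingale problem for $A$ by verifying the martingale property for the shifted process via a conditioning/optional-sampling argument. First I would set $Y = X(\tau + \cdot)$ and introduce the shifted filtration $\mathcal{G}_t = \mathcal{F}_{\tau+t}$; by the optional sampling theorem applied to the martingale $M_f$ of \eqref{Dmg} together with the finite stopping time $\tau$, the process $N_f(t) = M_f(\tau+t) - M_f(\tau) = f(Y(t)) - f(Y(0)) - \int_0^t g(Y(s))\,ds$ is a $\{\mathcal{G}_t\}$-martingale (using boundedness of $f$ and $g$ to handle integrability and the convergence needed in optional sampling). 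Thus under the original measure $\mathbb{P}$, $Y$ is a solution of the martingale problem for $A$ in $D_E[0,\infty)$ with respect to $\{\mathcal{G}_t\}$, started from the (random) distribution of $X(\tau)$.

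Next I would pass to the tilted measure. Define a new probability measure $\mathbb{Q}$ on $(\Omega,\mathcal{F})$ by $d\mathbb{Q}/d\mathbb{P} = H/E[H]$; then $P^{\tau,H}$ is precisely the law of $Y$ under $\mathbb{Q}$. The key point is that since $H$ is $\mathcal{F}_\tau$-measurable, hence $\mathcal{G}_0$-measurable, the Radon--Nikodym density is $\mathcal{G}_0$-measurable, so for any bounded $\{\mathcal{G}_t\}$-martingale $N$ under $\mathbb{P}$ the process $N$ remains a $\{\mathcal{G}_t\}$-martingale under $\mathbb{Q}$: indeed for $s \le t$ and bounded $\mathcal{G}_s$-measurable $Z$,
\[
E_{\mathbb{Q}}[N_f(t)Z] = \frac{E[H N_f(t) Z]}{E[H]} = \frac{E[H N_f(s) Z]}{E[H]} = E_{\mathbb{Q}}[N_f(s)Z],
\]
where the middle equality uses that $HZ$ is $\mathcal{G}_s$-measurable and bounded together with the $\{\mathcal{G}_t\}$-martingale property of $N_f$ under $\mathbb{P}$. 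Hence $N_f$ is a $\{\mathcal{G}_t\}$-martingale under $\mathbb{Q}$ for every $(f,g) \in A$, which says exactly that $Y$ under $\mathbb{Q}$ — equivalently, the canonical process under $P^{\tau,H}$ — solves the martingale problem for $A$ in $D_E[0,\infty)$.

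Finally I would note that $Y$ has sample paths in $D_E[0,\infty)$ because $X$ does and time-shifting preserves the cadlag property, and that $\mathbb{Q}$ is a genuine probability measure because $0 < E[H] < \infty$. By Remark \ref{fdd}, being a solution of the martingale problem in $D_E[0,\infty)$ is a property of the law on $D_E[0,\infty)$, so it suffices that the canonical process under $P^{\tau,H}$ has this property, which is what we have shown. I expect the main technical obstacle to be the careful justification of the optional sampling step — ensuring the uniform integrability needed to shift $M_f$ by the finite (but possibly unbounded) stopping time $\tau$; here the boundedness of $f$ forces $f(X(\tau+t)) - f(X(0))$ to be bounded, while $\int_0^{\tau+t} g(X(s))\,ds$ is only integrable, so one may need to first stop at $\tau \wedge n$, apply optional sampling at bounded times, and then pass to the limit $n \to \infty$ using dominated/monotone convergence, or alternatively invoke the corresponding result (Lemma 4.3.2 / Theorem 4.3.6-type machinery) from \cite{EK86}.
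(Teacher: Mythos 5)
Your proof is correct and follows essentially the same route as the paper's: tilt the measure by $H/E[H]$, shift to the filtration $\{\mathcal{F}_{\tau+t}\}$, and verify the martingale property for $X(\tau+\cdot)$ via the optional sampling theorem, using that $H$ is $\mathcal{F}_\tau$-measurable. The paper merely merges your two steps (shift, then tilt) into a single expectation computation against products $\prod_i f_i(X^\tau(t_i))$, and, like you, leaves the integrability details of optional sampling at the finite stopping time to the reader.
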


\begin{proof}
Let $(\Omega ,{\cal F},\mathbb{P})$ be the probability space 
on which $X$ is defined, and define $\mathbb{P}^H$ on $(\Omega ,{\cal F}
)$ by 
\[\mathbb{P}^H(C)=\frac {E^{\mathbb{P}}[H{\bf 1}_C]}{E^{\mathbb{P}}
[H]},\quad C\in {\cal F}.\]
Define  $X^{\tau}$ by $X^{\tau}(t)=X(\tau +t)$. 
$X^{\tau}$ is adapted to the filtration $\{{\cal F}_{\tau +t}\}$ and 
for  $0\leq t_1<\cdots <t_n<t_{n+1}$ and  $f_1,\cdots ,f_n\in B(E
)$,
\begin{eqnarray*}
&&E^{\mathbb{P}^H}\Big[\Big\{f(X^{\tau}(t_{n+1}))-f(X^{\tau}(t_n)
)-\int_{t_n}^{t_{n+1}}Af(X^{\tau}(s))ds\Big\}\,\Pi_{i=1}^nf_i(X^{
\tau}(t_i))\Big]\qquad\qquad\qquad\qquad\qquad\qquad\qquad\qquad\\
&&\quad =\frac 1{E^{\mathbb{P}}[H]}E^{\mathbb{P}}\Big[H\Big\{f(X(
\tau +t_{n+1}))-f(X(\tau +t_n))\\
&&\qquad\qquad\qquad\qquad -\int_{\tau +t_n}^{\tau +t_{n+1}}Af(X(
s))ds\Big\}\,\Pi_{i=1}^nf_i(X(\tau +t_i))\Big]\qquad\qquad\qquad\qquad
\qquad\qquad\\
&&\quad =0\end{eqnarray*}
by the optional sampling theorem.
Therefore, under $\mathbb{P}^H$, $X^{\tau}$ is a solution of the 
martingale problem. $P^{\tau ,H}$, given by (\ref{phdist}),
is the distribution of $X^{\tau}$ on $D_E[0,\infty )$.
\end{proof}

\begin{lemma}\label{ext} Let $\lambda >0$. Suppose $u,h\in B(E)$ satisfy 
\begin{equation}\mu u=E[\int_0^{\infty}e^{-\lambda t}h(X(t))dt],\label{mgcond}\end{equation}
for every solution of the martingale problem for $A$  in $D_E[0,\infty )$ with 
initial distribution $\mu$ and for every $\mu\in {\cal P}(E)$.
Then 
\begin{equation}u(X(t))-\int_0^t(\lambda u(X(s))-h(X(s)))ds\label{mgprop}\end{equation}
is a $\left\{{\cal F}_t^X\right\}$-martingale for every solution $
X$ of the martingale 
problem for $A$ in $D_E[0,\infty )$.
\end{lemma}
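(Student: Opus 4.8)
The plan is to use the Markov-type regeneration property established in Lemma \ref{rnlem} together with the resolvent identity of Lemma \ref{resid}. Fix a solution $X$ of the martingale problem for $A$ in $D_E[0,\infty)$ relative to a filtration $\{{\cal F}_t\}$. To prove that (\ref{mgprop}) is an $\{{\cal F}_t^X\}$-martingale, by Lemma \ref{resid} it is equivalent to prove that $e^{-\lambda t}u(X(t)) - u(X(0)) + \int_0^t e^{-\lambda s}(\lambda u(X(s)) - (\lambda u - h)(X(s)))\,ds = e^{-\lambda t}u(X(t)) - u(X(0)) + \int_0^t e^{-\lambda s} h(X(s))\,ds$ is an $\{{\cal F}_t^X\}$-martingale. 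Equivalently, using the integrated form, it suffices to show that for every finite $\{{\cal F}_t^X\}$-stopping time $\tau$,
\begin{equation}
E\big[u(X(\tau))\,\big|\,{\cal F}_\tau^X\big] \ \text{and the identity}\ \ E\Big[\int_0^\infty e^{-\lambda t} h(X(\tau+t))\,dt\,\Big|\,{\cal F}_\tau^X\Big] = u(X(\tau)) \quad \text{a.s.}\label{proofplan-key}
\end{equation}
More precisely, the cleanest route is: show that for bounded nonnegative ${\cal F}_\tau^X$-measurable $H$ with $0 < E[H] < \infty$, one has $E[H\,u(X(\tau))] = E\big[H \int_0^\infty e^{-\lambda t} h(X(\tau+t))\,dt\big]$, and then conclude that (\ref{mgprop}) has the martingale property.

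The key step is the following. Given such $\tau$ and $H$, Lemma \ref{rnlem} tells us that $P^{\tau,H}$ defined by (\ref{phdist}) is the distribution of a solution of the martingale problem for $A$ in $D_E[0,\infty)$; call its initial distribution $\mu^{\tau,H}$, which is the $P^{\tau,H}$-law of the coordinate at time $0$, i.e. the normalized law $E[H\,{\bf 1}_{\{X(\tau)\in\cdot\}}]/E[H]$. Applying hypothesis (\ref{mgcond}) to this solution gives
\begin{equation}
\mu^{\tau,H} u = E^{P^{\tau,H}}\Big[\int_0^\infty e^{-\lambda t} h(X(t))\,dt\Big],\label{proofplan-apply}
\end{equation}
and unwinding the definition of $P^{\tau,H}$, the left side is $E[H\,u(X(\tau))]/E[H]$ while the right side is $E\big[H\int_0^\infty e^{-\lambda t} h(X(\tau+t))\,dt\big]/E[H]$. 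Hence $E[H\,u(X(\tau))] = E\big[H\int_0^\infty e^{-\lambda t} h(X(\tau+t))\,dt\big]$ for all such $H$, which, since $H$ ranges over a generating family of bounded ${\cal F}_\tau^X$-measurable functions, yields $u(X(\tau)) = E\big[\int_0^\infty e^{-\lambda t} h(X(\tau+t))\,dt\,\big|\,{\cal F}_\tau^X\big]$ a.s.

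Finally I would convert this stopping-time identity into the martingale statement. Writing $Y(t) = \int_0^\infty e^{-\lambda s} h(X(t+s))\,ds$, the identity says $u(X(t)) = E[Y(t)\mid {\cal F}_t^X]$ for each fixed $t$ (taking $\tau\equiv t$); but $e^{-\lambda t} Y(t) = \int_t^\infty e^{-\lambda s} h(X(s))\,ds = Z - \int_0^t e^{-\lambda s} h(X(s))\,ds$ where $Z = \int_0^\infty e^{-\lambda s} h(X(s))\,ds$ is an integrable random variable, so $e^{-\lambda t} u(X(t)) + \int_0^t e^{-\lambda s} h(X(s))\,ds = E[Z\mid {\cal F}_t^X]$, manifestly an $\{{\cal F}_t^X\}$-martingale (closed by $Z$). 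Then Lemma \ref{resid}, applied in the direction "(\ref{resmg}) martingale $\Rightarrow$ (\ref{Dmg})-type martingale" with $g$ replaced by $\lambda u - h$, gives that (\ref{mgprop}) is an $\{{\cal F}_t^X\}$-martingale. The only point requiring a little care — the main obstacle — is the measurability and integrability bookkeeping: one must check $h\in B(E)$ and $\lambda>0$ make $Z$ and $Y(t)$ well-defined bounded-in-expectation random variables (immediate since $|Z|\le \|h\|/\lambda$), that $t\mapsto X(t)$ being cadlag makes $Y(t)$ and $\int_0^t e^{-\lambda s}h(X(s))\,ds$ adapted, and that one may apply (\ref{mgcond}) to the solution with distribution $P^{\tau,H}$ — which is legitimate precisely because the hypothesis is assumed for \emph{every} solution and \emph{every} initial distribution $\mu\in{\cal P}(E)$, and $P^{\tau,H}$ is such a solution by Lemma \ref{rnlem}.
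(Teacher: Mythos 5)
Your proposal is correct and follows essentially the same route as the paper: condition/reweight at a (stopping) time via Lemma \ref{rnlem} to see that the shifted process is again a solution, apply the hypothesis (\ref{mgcond}) to its initial distribution to identify $u(X(t))$ with $E[\int_0^\infty e^{-\lambda s}h(X(t+s))\,ds\mid{\cal F}_t^X]$, conclude that $e^{-\lambda t}u(X(t))+\int_0^te^{-\lambda s}h(X(s))\,ds$ is a closed martingale, and finish with Lemma \ref{resid} taking $f=u$, $g=\lambda u-h$. The paper does this with fixed times $t$ and indicator weights ${\bf 1}_B$, $B\in{\cal F}_t$, which is the special case of your $(\tau,H)$ formulation, so the two arguments coincide in substance.
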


\begin{proof}\ The lemma is Lemma 4.5.18 of \cite{EK86}, 
but we repeat the proof here for the convenience of the 
reader. 

Let $X$ be a solution of the martingale problem for $A$ on a 
probability space $(\Omega ,{\cal F},\mathbb{P})$. For $t\geq 0$ and 
$B\in {\cal F}_t$ with $\mathbb{P}(B)>0$, define $\mathbb{Q}$ on $
(\Omega ,{\cal F})$ by 
\[\mathbb{Q}(C)=\frac {E^{\mathbb{P}}[{\bf 1}_B{\bf 1}_C]}{\mathbb{
P}(B)},\quad C\in {\cal F}.\]
Then 
\begin{eqnarray*}
&&E^{\mathbb{P}}[{\bf 1}_Be^{\lambda t}\int_t^{\infty}e^{-\lambda 
s}h(X(s))ds]=E^{\mathbb{P}}[{\bf 1}_B\int_0^{\infty}e^{-\lambda s}
h(X(t+s))ds]\\
&&=\mathbb{P}(B)E^{\mathbb{Q}}[\int_0^{\infty}e^{-\lambda s}h(X(t
+s))ds].\end{eqnarray*}
By the same arguments as in the proof of Lemma 
\ref{rnlem}, $X(t+\cdot )$ is a solution of the martingale 
problem for $A$ on the probability space $(\Omega ,{\cal F},\mathbb{
Q})$ 
with  respect to the filtration $\{{\cal F}_{t+\cdot}\}$, with initial 
distribution 
$\nu (C)=\mathbb{Q}(X(t)\in C)=\mathbb{P}(X(t)\in C|B)$. 
Hence, by the assumptions of the lemma, 
\[E^{\mathbb{P}}[{\bf 1}_Be^t\int_t^{\infty}e^{-\lambda s}h(X(s))
ds]=\mathbb{P}(B)\nu u=E^{\mathbb{P}}[{\bf 1}_Bu(X(t))].\]
Therefore
\[E[\int_0^{\infty}e^{-\lambda s}h(X(s))ds|{\cal F}_t]=e^{-\lambda 
t}u(X(t))+\int_0^te^{-\lambda s}h(X(s))ds,\]
and the assertion follows from Lemma \ref{resid} with 
$f=u$ and $g=\lambda u-h$.
\end{proof}

Our approach to proving uniqueness of the solution of 
the martingale problem for $(A,\mu )$ relies on the following 
theorem.

\begin{definition}\ A class of functions $M\subset {\cal C}_b(E)$ is 
{\em separating\/} if, for $\mu ,\nu\in {\cal P}(E)$, $\mu f=\nu 
f$ for all 
$f\in M$ implies $\mu =\nu$.
\end{definition}

\begin{theorem}\label{lapunq} 
For each $\lambda >0$, suppose that 
\begin{equation}E[\int_0^{\infty}e^{-\lambda s}h(X(s))ds]=E[\int_0^{\infty}e^{-
\lambda s}h(Y(s))ds],\label{lapid}\end{equation}
for any two solutions of the martingale problem for 
$A$ in $D_E[0,\infty )$ with the same initial distribution and  
for all $h$ in a separating class of functions $M_{\lambda}$. 
Then  any two solutions of the martingale problem for 
$A$ in $D_E[0,\infty )$ with the same initial distribution 
have the same distribution on $D_E[0,\infty )$. 
\end{theorem}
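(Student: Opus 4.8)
The plan is to reduce the statement about distributions on $D_E[0,\infty)$ to the statement about one-dimensional distributions via Theorem \ref{1dfid}, and then to prove the one-dimensional statement by a bootstrap on finite-dimensional distributions using the Markov-type restarting from Lemma \ref{rnlem} together with the resolvent identity \eqref{lapid} and the separating property of $M_\lambda$. The key observation is that \eqref{lapid}, applied to $h\in M_\lambda$, says precisely that the Laplace transforms (in time) of the one-dimensional distributions of $X$ and $Y$ agree when integrated against $h$; since $M_\lambda$ is separating, this will force equality of the appropriately weighted time-averaged occupation measures, and uniqueness of Laplace transforms in $\lambda$ then yields equality of the one-dimensional distributions for (Lebesgue-)almost every $t$, hence — by right continuity of the paths in $D_E[0,\infty)$ and the separating class — for every $t$.

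More concretely, first I would fix $\mu\in\mathcal P(E)$ and two solutions $X,Y$ for $(A,\mu)$ in $D_E[0,\infty)$. For $h\in M_\lambda$, \eqref{lapid} reads $\int_0^\infty e^{-\lambda s}E[h(X(s))]\,ds=\int_0^\infty e^{-\lambda s}E[h(Y(s))]\,ds$. Writing $\nu^X_s(\cdot)=P(X(s)\in\cdot)$ and $\nu^Y_s$ similarly, this is $\int_0^\infty e^{-\lambda s}\nu^X_s h\,ds=\int_0^\infty e^{-\lambda s}\nu^Y_s h\,ds$ for all $\lambda>0$ and all $h\in M_\lambda$. Since this holds for every $\lambda>0$ and the map $s\mapsto \nu^X_s h$ is bounded and measurable, uniqueness of the Laplace transform gives $\nu^X_s h=\nu^Y_s h$ for Lebesgue-a.e.\ $s\ge 0$ — but there is a subtlety here, namely that the exceptional null set could depend on $h$, and $M_\lambda$ need not be countable. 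I would handle this by using that $s\mapsto h(X(s))$ and $s\mapsto h(Y(s))$ are right-continuous in $s$ (paths in $D_E[0,\infty)$ and $h\in C_b(E)$), so $s\mapsto\nu^X_s h-\nu^Y_s h$ is right-continuous by dominated convergence; a right-continuous function vanishing a.e.\ vanishes identically, giving $\nu^X_s h=\nu^Y_s h$ for \emph{all} $s\ge 0$ and each fixed $h\in M_\lambda$. Then the separating property of $M_\lambda$ yields $\nu^X_s=\nu^Y_s$ for all $s$, i.e.\ equality of one-dimensional distributions.

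The remaining point is that \eqref{lapid} is hypothesized only for solutions \emph{with the same initial distribution}, whereas to invoke Theorem \ref{1dfid} for the full conclusion I would want the one-dimensional uniqueness for \emph{every} $\mu\in\mathcal P(E)$ — which is exactly what the previous paragraph delivers, since $\mu$ was arbitrary. Thus one-dimensional distributions coincide for every initial law, and Theorem \ref{1dfid} (its second assertion, for solutions in $D_E[0,\infty)$) upgrades this to equality of the distributions on $D_E[0,\infty)$. I expect the main obstacle to be the measure-theoretic care in the middle step: controlling the $h$-dependent null set and justifying the right-continuity/dominated-convergence argument, and making sure $\lambda$ ranges over enough values (all $\lambda>0$, or even a sequence $\lambda_n\to\infty$, suffices for Laplace-transform uniqueness) while the separating class $M_\lambda$ is allowed to vary with $\lambda$ — one only needs, for each fixed $\lambda$ in the chosen set, that $M_\lambda$ separates, and then run the Laplace inversion with $h$ held fixed across a range of $\lambda$'s, which requires a moment's thought if $M_\lambda$ genuinely changes with $\lambda$. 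A clean way around the latter is to note that it is enough to have the identity for a single $h$ and all $\lambda$ in a sequence going to infinity to conclude $\nu^X_s h = \nu^Y_s h$ near $s=0$ and then propagate; but the simplest route is to observe that $\bigcap$ over a countable cofinal set of $\lambda$'s of the (separating) $M_\lambda$'s still separates if one assumes, as is implicit, a common separating class, or else to argue $\lambda$ by $\lambda$ using right-continuity to pin down $\nu^X_s h$ at $s$ rational and then everywhere.
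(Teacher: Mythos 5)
Your overall architecture matches the paper's: reduce to one-dimensional distributions via Theorem \ref{1dfid}, and get those from \eqref{lapid} by Laplace inversion plus the separating property plus right continuity. But there is a genuine gap in the order in which you use the two main ingredients. You first fix $h\in M_\lambda$ and invert the Laplace transform in $\lambda$ to conclude $\nu^X_s h=\nu^Y_s h$ for a.e.\ $s$. That inversion needs the identity $\int_0^\infty e^{-\lambda s}\nu^X_s h\,ds=\int_0^\infty e^{-\lambda s}\nu^Y_s h\,ds$ to hold for the \emph{fixed} $h$ and all (or at least a set of uniqueness of) $\lambda>0$; the hypothesis only gives it for $h\in M_\lambda$, and $M_\lambda$ is allowed to depend on $\lambda$, so a given $h$ may satisfy the identity for only a single $\lambda$ --- from which nothing about $s\mapsto\nu^X_s h$ can be recovered. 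You flag this at the end, but none of your proposed repairs works: intersecting the $M_\lambda$ over a cofinal sequence may yield a non-separating (even empty) class and amounts to adding a hypothesis (``a common separating class'') that the theorem does not make; and ``arguing $\lambda$ by $\lambda$ using right-continuity'' cannot substitute for Laplace inversion, since one transform value at one $\lambda$ does not pin down the function.

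The fix is a simple reordering, and it is exactly what the paper does. For each \emph{fixed} $\lambda$, introduce the finite measures $m^X(C)=\int_0^\infty e^{-\lambda s}E[{\bf 1}_C(X(s))]\,ds$ and $m^Y$ likewise; the hypothesis says $m^Xh=m^Yh$ for all $h\in M_\lambda$, and since $\lambda m^X,\lambda m^Y$ are probability measures and $M_\lambda$ is separating, $m^X=m^Y$, i.e.\ the identity \eqref{lapid} holds for \emph{all} $h\in B(E)$ at that $\lambda$. Only now fix $h\in C_b(E)$: the identity holds for every $\lambda>0$, Laplace-transform uniqueness gives $E[h(X(s))]=E[h(Y(s))]$ for a.e.\ $s$, and right continuity of the paths upgrades this to all $s$ (this also disposes of your $h$-dependent null set worry, exactly as you suggest). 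Since $C_b(E)$ is separating, the one-dimensional distributions agree, and Theorem \ref{1dfid} finishes the proof. With that single transposition your argument becomes the paper's.
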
\ 

\begin{proof}
The proof is actually implicit in the proof of Corollary 
4.4.4 of \cite{EK86}, but we give it here for clarity.
For any two solutions of the martingale problem for 
$A$ in $D_E[0,\infty )$ with the same initial distribution, 
we have, for all $h\in M_{\lambda}$, 
\begin{equation}\int_0^{\infty}e^{-\lambda s}E[h(X(s))]ds=\int_0^{
\infty}e^{-\lambda s}E[h(Y(s))]ds.\label{lamid}\end{equation}
Consider the measures on $(E,{\cal B}(E))$ defined by 
\[m^X(C)=\int_0^{\infty}e^{-\lambda s}E[1_C(X(s))]ds,\quad m^Y(C)
=\int_0^{\infty}e^{-\lambda s}E[1_C(Y(s))]ds.\]
Then $(\ref{lamid})$ reads 
\[m^Xh=m^Yh,\]
and, as $M_{\lambda}$ is separating, (\ref{lamid}) holds for all 
$h\in B(E)$.  Since the identity holds for each $\lambda >0$,
by uniqueness of the Laplace transform, 
\[E[h(X(s))]=E[h(Y(s))],\qquad\mbox{\rm for almost every }s\geq 0
,\]
and by the right continuity of $X$ and $Y$, for all 
$s\geq 0$. Consequently, $X$ and 
$Y$ have the same one-dimensional distributions, and hence 
by Theorem \ref{1dfid}, the same finite-dimensional 
distributions and the same distribution on $D_E[0,\infty )$. 
\end{proof}

\begin{corollary}\label{rngunq}
Suppose that for each $\lambda >0$, 
${\cal R}(\lambda -A)$ is separating.  Then for each initial distribution 
$\mu\in {\cal P}(E)$, any two cadlag solutions of the martingale problem for 
$(A,\mu )$ have the same distribution on $D_E[0,\infty )$.  
\end{corollary}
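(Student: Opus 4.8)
The plan is to obtain the corollary as a direct consequence of Theorem~\ref{lapunq}, taking the separating class $M_\lambda$ there to be ${\cal R}(\lambda-A)$ itself. First I would fix $\lambda>0$ and an arbitrary $h\in{\cal R}(\lambda-A)$, and write $h=\lambda f-g$ with $(f,g)\in A$. For any solution $X$ of the martingale problem for $(A,\mu)$ in $D_E[0,\infty)$, the process (\ref{Dmg}) associated with this pair $(f,g)$ is a martingale, and since $f,g\in B(E)$ are bounded, Lemma~\ref{resid} applies; its conclusion (\ref{inrg}) then gives
$$\mu f=E\!\left[\int_0^\infty e^{-\lambda s}\bigl(\lambda f(X(s))-g(X(s))\bigr)\,ds\right]=E\!\left[\int_0^\infty e^{-\lambda s}h(X(s))\,ds\right],$$
the integrals converging absolutely because $h$ is bounded and $\lambda>0$.

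The point to exploit is that the left-hand side $\mu f$ depends only on $(f,\mu)$, not on which solution $X$ was chosen. So if $X$ and $Y$ are any two solutions for $(A,\mu)$ in $D_E[0,\infty)$, then
$$E\!\left[\int_0^\infty e^{-\lambda s}h(X(s))\,ds\right]=\mu f=E\!\left[\int_0^\infty e^{-\lambda s}h(Y(s))\,ds\right]\qquad\mbox{for all }h\in{\cal R}(\lambda-A).$$
By hypothesis ${\cal R}(\lambda-A)$ is separating, so it qualifies as the class $M_\lambda$ in Theorem~\ref{lapunq}; since this identity holds for every $\lambda>0$, Theorem~\ref{lapunq} immediately yields that $X$ and $Y$ have the same distribution on $D_E[0,\infty)$, as claimed.

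I do not expect any real obstacle: this is a formal corollary, and the only points requiring a word of care are (i) that Lemma~\ref{resid} is genuinely applicable --- which is immediate, since $(f,g)\in A$ forces $f,g$ bounded and makes (\ref{Dmg}) a martingale for every solution --- and (ii) the tacit understanding that ${\cal R}(\lambda-A)\subset C_b(E)$, so that the word ``separating'' is meaningful in the sense of the preceding definition, which holds automatically when $A\subset C_b(E)\times C_b(E)$. Beyond invoking Lemma~\ref{resid} and then Theorem~\ref{lapunq}, nothing further is needed.
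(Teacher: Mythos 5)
Your proof is correct and follows essentially the same route as the paper: verify the hypothesis of Theorem~\ref{lapunq} with $M_\lambda={\cal R}(\lambda-A)$ by noting that, for $h=\lambda f-g$ with $(f,g)\in A$, the identity (\ref{inrg}) forces $E[\int_0^\infty e^{-\lambda s}h(X(s))\,ds]=\mu f$ for every solution. The only cosmetic difference is that the paper cites Lemma~\ref{ext} alongside Theorem~\ref{lapunq}, whereas you invoke Lemma~\ref{resid} directly, which is if anything the more immediate source of the needed identity.
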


\begin{proof}
The assertion follows immediately from Lemma \ref{ext} 
and Theorem \ref{lapunq}.
\end{proof}

Martingale problems and dissipative operators are closely 
related.

\begin{definition}
A linear operator $A\subset B(E)\times B(E)$ is {\em dissipative\/} provided
\[\Vert\lambda f-g\Vert\geq\lambda\Vert f\Vert ,\]
for each $(f,g)\in A$ and each $\lambda >0$.
\end{definition}

\begin{lemma}\label{diss}
Suppose that for each $x\in E$, there exists a solution of the 
martingale problem for $(A,\delta_x)$. Then $A$ is dissipative.
\end{lemma}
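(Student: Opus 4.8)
The plan is to fix $(f,g)\in A$ and $\lambda>0$, pick an arbitrary $x\in E$, and use a solution $X$ of the martingale problem for $(A,\delta_x)$ to produce a pointwise lower bound on $\|\lambda f-g\|$ that, after optimizing over $x$, gives $\|\lambda f-g\|\ge\lambda\|f\|$. The starting point is the identity supplied by Lemma \ref{resid}: since $X$ solves the martingale problem for $A$ (hence for $(f,g)$), we have
\[
f(x)=\mu f=E\Big[\int_0^\infty e^{-\lambda s}\big(\lambda f(X(s))-g(X(s))\big)\,ds\Big].
\]
(Strictly, Lemma \ref{resid} is stated for $X$ with cadlag paths; if the given solution is merely measurable one invokes Remark \ref{cadlag}, or one works directly with (\ref{inrg}) which only uses the martingale property through finite-dimensional distributions and the Fubini/dominated-convergence argument in the proof of Lemma \ref{resid}.)

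Next I would estimate the right-hand side crudely:
\[
|f(x)|\le\int_0^\infty e^{-\lambda s}\,\big\|\lambda f-g\big\|\,ds=\frac1\lambda\,\|\lambda f-g\|,
\]
using $|\lambda f(X(s))-g(X(s))|\le\|\lambda f-g\|$ for every $s$. Hence $\lambda|f(x)|\le\|\lambda f-g\|$. Since $x\in E$ was arbitrary and a solution of the martingale problem for $(A,\delta_x)$ exists for every $x$ by hypothesis, I may take the supremum over $x\in E$ to obtain $\lambda\|f\|\le\|\lambda f-g\|$, which is exactly the dissipativity inequality. This holds for every $(f,g)\in A$ and every $\lambda>0$, so $A$ is dissipative.

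I do not expect a serious obstacle here; the only point requiring a little care is the measurability/integrability needed to apply Lemma \ref{resid} (equivalently, to justify passing to the limit $t\to\infty$ in $E[f(X(0))]-E[e^{-\lambda t}f(X(t))]=E[\int_0^t e^{-\lambda s}(\lambda f(X(s))-g(X(s)))\,ds]$), but this is handled by the boundedness of $f$ and $g$ together with dominated convergence exactly as in the proof of Lemma \ref{resid}. Everything else is the one-line bound above.
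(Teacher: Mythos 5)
Your proof is correct and follows essentially the same route as the paper: apply the identity (\ref{inrg}) from Lemma \ref{resid} with $\mu=\delta_x$ to get $f(x)=E[\int_0^\infty e^{-\lambda s}(\lambda f(X(s))-g(X(s)))\,ds]$, bound the integrand by $\|\lambda f-g\|$, and take the supremum over $x$. The extra remarks on measurability and the cadlag issue are fine but not needed beyond what Lemma \ref{resid} already provides.
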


\begin{proof}
By Lemma \ref{resid},
\[|f(x)|\leq E[\int_0^{\infty}e^{-\lambda s}|\lambda f(X(s))-g(X(
s))|ds]\leq\frac 1{\lambda}\Vert\lambda f-g\Vert .\]
\end{proof}

\subsection{Viscosity solutions}
\label{viscsol}

Let $A\subset C_b(E)\times C_b(E)$.  Theorem \ref{rngunq} implies that 
if for each $\lambda >0$, the equation 
\begin{equation}\lambda u-g=h\label{reso}\end{equation}
has a solution $(u,g)\in A$ for every $h$ in a class 
of functions $M_{\lambda}\subseteq C_b(E)$ that is separating, 
then for each initial distribution $\mu$, the martingale problem for 
$(A,\mu )$ has at most one solution.  
Unfortunately in many situations it is hard to verify 
that $(\ref{reso})$ has a solution in $A$. Thus one is lead to consider 
a weaker notion of solution, namely the notion of 
{\em viscosity solution}. 

\begin{definition}\label{visc}{\bf (viscosity (semi)solution)}

Let $A$ be as above, $\lambda >0$, and $h\in C_b(E)$.

\begin{itemize}

\item[a)]$u\in B(E)$ is a {\em viscosity subsolution\/} of (\ref{reso})
if and only if $u$ is upper semicontinuous  and 
if $(f,g)\in A$ and $x_0\in E$ satisfy
\begin{equation}\sup_x(u-f)(x)=(u-f)(x_0),\label{subtest}\end{equation}
then
\begin{equation}\lambda u(x_0)-g(x_0)\leq h(x_0).\label{subeq}\end{equation}

\item[b)]$u\in B(E)$ is a {\em viscosity supersolution\/} of $(\mbox{\rm \ref{reso}}
)$ 
if and only if $u$ is lower semicontinuous  and 
if $(f,g)\in A$ and $x_0\in E$ satisfy 
\begin{equation}\inf_x(u-f)(x)=(u-f)(x_0),\label{suptest}\end{equation}
then
\begin{equation}\lambda u(x_0)-g(x_0)\geq h(x_0).\label{supeq}\end{equation}
\end{itemize}

A function $u\in C_b(E)$ is a 
{\em viscosity solution\/} of (\ref{reso}) if it is both a 
subsolution and a supersolution.  
\end{definition}

In the theory of viscosity solutions, usually existence of 
a viscosity solution follows by existence of a viscosity 
subsolution and a viscosity supersolution, together with 
the following {\em comparison principle}.

\begin{definition}\label{compar} The {\em comparison principle }
holds for $(\mbox{\rm \ref{reso}})$ when every subsolution is 
pointwise less than or equal to every supersolution.
\end{definition}

\begin{remark}\label{do-in-v}
To better motivate the notion of viscosity solution in the 
context of martingale problems, assume that there exists 
a solution of the martingale problem for $(A,\delta_x)$ for each 
$x\in E$.  Suppose that 
there exists $v\in C_b(E)$ such that 
\begin{equation}e^{-\lambda t}v(X(t))+\int_0^te^{-\lambda s}h(X(s
))ds\label{lammgp}\end{equation}
is a $\{{\cal F}^X_t\}$-martingale for every solution $X$ of the 
martingale problem for $A$.  
Let $(f,g)\in A$ and $x_0$ satisfy
\[\sup_x(v-f)(x)=(v-f)(x_0).\]
Let $X$ be a solution of the martingale problem for $(A,\delta_{x_
0})$. 
Then 
\[e^{-\lambda t}(v(X(t))-f(X(t)))+\int_0^te^{-\lambda s}(h(X(s)-\lambda 
f(X(s))+g(X(s)))ds\]
is a $\{{\cal F}_t^X\}$-martingale by Lemma \ref{resid}, and 
\begin{eqnarray*}
&&E[\int_0^te^{-\lambda s}(\lambda v(X(s))-g(X(s))-h(X(s)))ds]\\
&&\qquad\qquad =E[\int_0^te^{-\lambda s}\lambda (v(X(s))-f(X(s)))
ds]\\
&&\qquad\qquad\qquad\qquad +E[e^{-\lambda t}(v(X(t)-f(X(t)))-(v(x_
0)-f(x_0))]\\
&&\qquad\qquad\leq 0.\end{eqnarray*}
Dividing by $t$ and letting $t\rightarrow 0$, we see that
\[\lambda v(x_0)-g(x_0)\leq h(x_0),\]
so $v$ is a subsolution for (\ref{reso}).  A similar argument 
shows that it is also a supersolution and hence a 
viscosity solution. We will give conditions such  that if the comparison 
principle holds for some $h$, then a viscosity solution $v$ exists and (\ref{lammgp}) is a 
martingale for every solution of the martingale 
problem for $A$. 
\end{remark}\medskip

In the case of a domain with boundary, in order to 
uniquely determine the solution of the martingale problem for $A$ one usually
must specify some boundary conditions, by means of 
boundary operators $B_1\ldots ,B_m$.

Let $E_0\subseteq E$ be an open set and let 
\[\partial E_0=\cup_{k=1}^mE_k,\]
for disjoint, nonempty Borel sets $E_1,\ldots ,E_m$.

Let $A\subseteq C_b(\bar {E}_0)\times C_b(\bar {E}_0)$, $B_k\subseteq 
C_b(\bar {E}_0)\times C(\bar {E}_0)$, $k=1,...,m$, 
be linear operators with a common domain ${\cal D}$. 
For simplicity we will assume that $ $$E$ is compact (hence 
the subscript $b$ will be dropped) and that $A,B_1,\ldots ,B_m$ are 
single valued. 

\begin{definition}
Let $A,B_1\ldots ,B_m$ be as above, and let $\lambda >0$.  For 
$h\in C_b(\bar {E}_0)$, consider the equation 
\begin{eqnarray}
\lambda u-Au&=&h,\quad\mbox{\rm on }E_0\label{creso}\\
-B_ku&=&0,\quad\mbox{\rm on }E_k,\quad k=1,\cdots ,m.\nonumber
\end{eqnarray}

\begin{itemize}
\item[a)]$u\in B(\bar {E}_0)$ is a {\em viscosity subsolution\/} of (\ref{creso})
if and only if $u$ is upper semicontinuous  and 
if $f\in {\cal D}$ and $x_0\in\bar {E}_0$ satisfy 
\begin{equation}\sup_x(u-f)(x)=(u-f)(x_0),\label{csubtest}\end{equation}
then
\begin{eqnarray}
\lambda u(x_0)-Af(x_0)\leq h(x_0),&\qquad&\mbox{\rm if }x_0\in E_
0,\label{interior}\\
(\lambda u(x_0)-Af(x_0)-h(x_0))\wedge\min_{k:x_0\in\bar {E}_k}(-B_
kf(x_0))\leq 0,&\qquad&\mbox{\rm if }x_0\in\partial E_0.\label{bndry}
\end{eqnarray}

\item[b)]$u\in B(\bar {E}_0)$ is a {\em viscosity supersolution\/} of 
$(\mbox{\rm \ref{creso}})$ 
if and only if $u$ is lower semicontinuous  and 
if $f\in {\cal D}$ and $x_0\in\bar {E}_0$ satisfy 
\begin{equation}\inf_x(u-f)(x)=(u-f)(x_0),\label{csuptest}\end{equation}
then
\begin{eqnarray}
\lambda u(x_0)-Af(x_0)\geq h(x_0),&\qquad&\mbox{\rm if }x_0\in E_
0,\label{csupeq}\\
(\lambda u(x_0)-Af(x_0)-h(x_0))\vee\max_{k:x_0\in\bar {E}_k}(-B_k
f(x_0))\geq 0,&\qquad&\mbox{\rm if }x_0\in\partial E_0.\nonumber\end{eqnarray}
\end{itemize}

A function $u\in C(\bar {E}_0)$ is a 
{\em viscosity solution\/} of (\ref{creso}) if it is both a 
subsolution and a supersolution.  

\end{definition}

\begin{remark}
The above definition, with the `relaxed' requirement that 
on the boundary either the interior inequality or the 
boundary inequality be satisfied by at least one among 
$-B_1f,\cdots ,-B_mf$ is the standard one in the theory of 
viscosity solutions 
where it is used in particular because it is stable under 
limit operations and because it can be localized. As will be 
clear in Section \ref{cmgpunq}, it suits perfectly our 
approach to martingale problems with boundary conditions.
\end{remark}\medskip

\section{Comparison principle and uniqueness for 
martingale problems}\label{mgpunq}

In this section, we restrict our attention to  
\[A\subset C_b(E)\times C_b(E)\]
and consider the martingale problem for $A$ in $D_E[0,\infty )$.  
Let $\Pi\subset {\cal P}(D_E[0,\infty ))$ denote the collection of distributions 
of solutions of 
the martingale problem for $A$ in $D_E[0,\infty )$, 
and, for $\mu\in {\cal P}(E)$, let 
$\Pi_{\mu}\subset\Pi$ denote the subcollection with initial distribution 
$\mu$.  If $\mu =\delta_x$, we will write $\Pi_x$ for $\Pi_{\delta_
x}$. In this section  
$X$ will be the canonical process on $D_E[0,\infty )$. 
Assume the following condition.

\begin{condition}\label{H}\hfill

\begin{itemize}
\item[a)] ${\cal D}(A)$ is dense in $C_b(E)$ in the topology of 
uniform convergence on compact sets.

\item[b)] For each $\mu\in {\cal P}(E)$, $\Pi_{\mu}\neq\emptyset$.

\item[c)]
If ${\cal K}\subset {\cal P}(E)$ is compact, then $\cup_{\mu\in {\cal K}}
\Pi_{\mu}$ is compact.  (See 
Proposition \ref{comp} below.)
\end{itemize}
\end{condition}

\begin{remark}\label{Jak}
In working with these conditions, it is simplest to take 
the usual Skorohod topology on $D_E[0,\infty )$. (See, for 
example, Sections 3.5-3.9 of \cite{EK86}.) The results of 
this paper also hold if we take the Jakubowski topology 
 (\cite{Jak97}).  The $\sigma$-algebra of 
Borel sets ${\cal B}(D_E[0,\infty ))$ is 
the same for both topologies and, in fact, is simply the 
smallest $\sigma$-algebra under which all mappings of the form 
$x\in D_E[0,\infty )\rightarrow x(t)$, $t\geq 0$, are measurable. 

It is also relevant to note that mappings of the form
\[x\in D_E[0,\infty )\rightarrow\int_0^{\infty}e^{-\lambda t}h(x(
t))dt,\quad h\in C_b(E),\lambda >0,\]
are continuous under both topologies.  The Jakubowski 
topology could be particularly useful for extensions of 
the results of Section \ref{cmgpunq} to constrained 
martingale problems in which the boundary terms are 
not local-time integrals.
\end{remark}\medskip

\begin{proposition}\label{comp}In addition to Condition 
\ref{H}(a), assume that
for each compact $K\subset E$, $\varepsilon >0$, and $T>0$, there 
exists a compact $K'\subset E$ such that 
\[P\{X(t)\in K',t\leq T,X(0)\in K\}\geq (1-\varepsilon )P\{X(0)\in 
K\},\quad\forall P\in\Pi .\]
Then Condition \ref{H}(c) holds.
\end{proposition}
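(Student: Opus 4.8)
The plan is to show that the family $\{\mu\mapsto\Pi_\mu\}$ produces a relatively compact set of distributions on $D_E[0,\infty)$ whose limit points are again solutions of the martingale problem with initial distributions in $\mathcal{K}$. First I would observe that the standard route is to verify the compact containment condition and an oscillation/tightness estimate for the processes $f(X(\cdot))$, $f\in\mathcal{D}(A)$, and then apply the Skorohod-space relative compactness criterion together with the fact that the martingale property passes to the limit. Concretely, let $\{P_n\}\subset\bigcup_{\mu\in\mathcal{K}}\Pi_\mu$ with $P_n\in\Pi_{\mu_n}$, $\mu_n\in\mathcal{K}$; since $\mathcal{K}$ is compact we may pass to a subsequence with $\mu_n\to\mu\in\mathcal{K}$.

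The key steps, in order, would be: (i) Use the hypothesis of the proposition (the uniform-in-$P$ compact containment estimate) together with compactness of $\mathcal{K}$ in $\mathcal{P}(E)$ to deduce a compact containment condition holding uniformly over $\bigcup_{\mu\in\mathcal{K}}\Pi_\mu$: given $\varepsilon,T$, first choose a compact $K\subset E$ with $\mu(K)\geq 1-\varepsilon/2$ for all $\mu\in\mathcal{K}$ (possible by tightness of the compact set $\mathcal{K}$), then apply the stated inequality to get $K'$ with $P\{X(t)\in K', t\leq T\}\geq 1-\varepsilon$ for all $P$ in the union. (ii) For each $f\in\mathcal{D}(A)$, use that $M_f$ is a martingale with $|M_f(t+h)-M_f(t)|$ controlled by $2\|f\|$ plus $h\|Af\|$ to get the Aldous-type tightness of $\{f(X(\cdot))\}$ under all the $P_n$; combined with (i) this gives relative compactness of $\{P_n\}$ in $\mathcal{P}(D_E[0,\infty))$ via Theorem 3.9.1 (or 3.7.2) of \cite{EK86}, using that $\mathcal{D}(A)$ is dense in $C_b(E)$ uniformly on compacts (Condition \ref{H}(a)) so that the functions $f$ separate points and the compact-containment reduces the problem to these test functions. (iii) Pass to a weakly convergent subsequence $P_{n_k}\to P$; show $P\in\Pi_\mu$: the map $x\mapsto x(0)$ is continuous at points of continuity, and since initial distributions converge, $P\circ(x(0))^{-1}=\mu$; and for each $(f,g)\in A$, using boundedness and continuity of $f,g$ together with the a.s.\ convergence of $X_{n_k}(t)\to X(t)$ for all but countably many $t$ (Skorohod representation), the martingale identity $E[(M_f(t+r)-M_f(t))\prod_i h_i(X(t_i))]=0$ (in the form of Remark \ref{fdd}, with $h_i\in C_b(E)$ and $t_i$ chosen to avoid the fixed discontinuity set) passes to the limit, so $P\in\Pi$. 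Hence $P\in\Pi_\mu\subset\bigcup_{\mu\in\mathcal{K}}\Pi_\mu$, proving the union is relatively compact; since it contains all its limit points it is compact, which is Condition \ref{H}(c).

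The main obstacle I expect is step (ii)–(iii): carefully justifying that relative compactness in $D_E[0,\infty)$ follows from compact containment plus tightness of $\{f(X(\cdot)):f\in\mathcal{D}(A)\}$ — this requires that $\mathcal{D}(A)$, being dense in $C_b(E)$ uniformly on compacts, furnishes enough functions to invoke the criterion of \cite{EK86} (e.g.\ Theorem 3.9.1 or Theorem 9.1 of Chapter 3), and then verifying the limit is a solution requires handling the Skorohod topology subtleties (fixed times of discontinuity, convergence $X_{n_k}(t)\to X(t)$ only at continuity points of the limit, and the need for the integral term $\int_0^t g(X(s))ds$ to converge, which follows from dominated convergence since $g$ is bounded continuous and convergence holds Lebesgue-a.e.\ in $s$). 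Everything else — the tightness estimates for bounded martingales and the reduction via the uniform compact containment — is routine once set up.
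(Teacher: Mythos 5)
Your argument is essentially a correct unfolding of the result the paper invokes: the paper's entire proof of this proposition is a one-line citation of Theorem 4.5.11(b) of \cite{EK86}, and your steps (i)--(iii) are precisely the standard proof of that theorem (uniform compact containment over $\bigcup_{\mu\in{\cal K}}\Pi_\mu$ obtained from tightness of the compact set ${\cal K}\subset{\cal P}(E)$ plus the hypothesis; relative compactness in $D_E[0,\infty)$ via the compact containment condition and the density of ${\cal D}(A)$, using Theorems 3.9.1 and 3.9.4 of \cite{EK86}; identification of the limit as a solution with initial law in ${\cal K}$ by passing to the limit in the martingale identity of Remark \ref{fdd}, avoiding fixed discontinuity times). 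So the two approaches coincide; yours just does not outsource the work. The one place you are too quick is the tightness of $\{f(X(\cdot))\}$: the bound $|M_f(t+h)-M_f(t)|\le 2\|f\|+h\|Af\|$ is a uniform bound, not an oscillation estimate, and by itself it does not give the Aldous condition --- the martingale increment does not visibly vanish as $h\to0$ from boundedness alone. What makes this work is exactly the content of Theorem 3.9.4 of \cite{EK86} (which the paper itself uses for the constrained analogue in Proposition \ref{suffHc}): the decomposition $f(X(t))=f(X(0))+\int_0^t g(X(s))\,ds+M_f(t)$ with $\sup_P\|g(X(\cdot))\|_\infty\le\|g\|<\infty$ satisfies the hypotheses of that theorem with $p=\infty$, and the nontrivial control of the martingale part is carried out there. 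Since you do cite that criterion, the gap is one of justification rather than of substance, but you should lean on Theorem 3.9.4 explicitly rather than on the crude increment bound.
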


\begin{proof}
The assertion is part of the thesis of Theorem 4.5.11 (b) of \cite{EK86}.
\end{proof}

Let $\lambda >0$, and for $h\in C_b(E)$, define 
\begin{equation}u_{+}(x)=u_{+}(x,h)=\sup_{P\in\Pi_x}E^P[\int_0^{\infty}
e^{-\lambda t}h(X(t))dt],\label{uplus}\end{equation}
\begin{equation}\quad u_{-}(x)=u_{-}(x,h)=\inf_{P\in\Pi_x}E^P[\int_
0^{\infty}e^{-\lambda t}h(X(t))dt].\label{uminus}\end{equation}
\begin{equation}\pi_{+}(\Pi_{\mu},h)=\sup_{P\in\Pi_{\mu}}E^P[\int_
0^{\infty}e^{-\lambda t}h(X(t))dt],\label{piplus}\end{equation}
\begin{equation}\pi_{-}(\Pi_{\mu},h)=\inf_{P\in\Pi_{\mu}}E^P[\int_
0^{\infty}e^{-\lambda t}h(X(t))dt].\label{piminus}\end{equation}

\begin{lemma}\label{intrep}
Under Condition \ref{H}, for $h\in C_b(E)$, 
$u_{+}(x,h)$ is upper semicontinuous (hence measurable), and 
\begin{equation}\pi_{+}(\Pi_{\mu},h)=\int_Eu_{+}(x,h)\mu (dx)\qquad
\forall\mu\in {\cal P}(E).\label{intrep2}\end{equation}
The analogous result holds for $u_{-}$ and $\pi_{-}$.  
\end{lemma}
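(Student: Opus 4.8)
The plan is to prove two facts: (i) upper semicontinuity of $u_+(\cdot,h)$, and (ii) the integral representation $\pi_+(\Pi_\mu,h)=\int_E u_+(x,h)\,\mu(dx)$; the argument for $u_-$ and $\pi_-$ is identical after replacing $\sup$ by $\inf$, upper by lower semicontinuity, and using that $h\in C_b(E)$ implies $-h\in C_b(E)$. Throughout, write $\Lambda(P)=E^P[\int_0^\infty e^{-\lambda t}h(X(t))\,dt]$. Two preliminary observations drive everything. First, by Remark \ref{Jak}, the map $x\in D_E[0,\infty)\mapsto\int_0^\infty e^{-\lambda t}h(x(t))\,dt$ is bounded and continuous on $D_E[0,\infty)$; hence $P\mapsto\Lambda(P)$ is bounded and continuous on $\mathcal{P}(D_E[0,\infty))$ with the weak topology. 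Second, by Condition \ref{H}(c) (via Proposition \ref{comp}), the set $\Pi_x=\cup_{\mu\in\{\delta_x\}}\Pi_\mu$ is compact, so the supremum in \eqref{uplus} is attained, and more generally $\cup_{\mu\in\mathcal{K}}\Pi_\mu$ is compact for any compact $\mathcal{K}\subset\mathcal{P}(E)$.

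For upper semicontinuity, I would take $x_n\to x$ in $E$ and let $P_n\in\Pi_{x_n}$ achieve $\Lambda(P_n)=u_+(x_n,h)$; I want to show $\limsup_n u_+(x_n,h)\le u_+(x,h)$. Since $\{\delta_{x_n}\}\cup\{\delta_x\}$ is a compact subset of $\mathcal{P}(E)$, the set $\mathcal{K}:=\cup_n\Pi_{x_n}\cup\Pi_x$ is compact in $\mathcal{P}(D_E[0,\infty))$, so along a subsequence realizing the $\limsup$ we have $P_{n_k}\Rightarrow P$ for some $P\in\mathcal{P}(D_E[0,\infty))$. The key point is that $P\in\Pi_x$: membership in $\Pi$ is preserved under weak limits because, by Remark \ref{fdd}, being a solution of the martingale problem is characterized by the vanishing of expectations of the form $E[(f(X(t+r))-f(X(t))-\int_t^{t+r}Af(X(s))ds)\prod_i h_i(X(t_i))]$ with $f\in\mathcal{D}(A)$, $f,Af,h_i\in C_b(E)$, $h_i$ bounded continuous, and each such functional is continuous on $D_E[0,\infty)$ in the relevant (Skorohod or Jakubowski) topology, at least for $t,r,t_i$ outside an at-most-countable exceptional set, which suffices by a standard density argument; and the initial condition $X(0)=x$ passes to the limit since $x\mapsto x(0)$ is continuous and $x_n\to x$. (In fact this stability is exactly what lies behind Condition \ref{H}(c)/Proposition \ref{comp}, so one may simply invoke that $\cup_{\mu\in\mathcal{K}}\Pi_\mu$ is compact \emph{and} that the map $P\mapsto(\text{initial distribution of }P)$ is continuous, forcing $P\in\Pi_x$.) Then by continuity of $\Lambda$, $\limsup_k u_+(x_{n_k},h)=\lim_k\Lambda(P_{n_k})=\Lambda(P)\le\sup_{Q\in\Pi_x}\Lambda(Q)=u_+(x,h)$, giving upper semicontinuity; in particular $u_+(\cdot,h)$ is Borel measurable, so $\int_E u_+\,d\mu$ makes sense.

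For the integral representation, the inequality $\pi_+(\Pi_\mu,h)\le\int_E u_+(x,h)\,\mu(dx)$ is the easy direction: for any $P\in\Pi_\mu$ one conditions on $X(0)$ to write $P=\int_E P_x\,\mu(dx)$ for a measurable family $P_x\in\Pi_x$ (this decomposition of a solution of a martingale problem according to its initial point is standard; cf. the arguments in Lemmas \ref{rnlem} and \ref{ext}), whence $\Lambda(P)=\int_E\Lambda(P_x)\,\mu(dx)\le\int_E u_+(x,h)\,\mu(dx)$, and taking the supremum over $P\in\Pi_\mu$ finishes it. The reverse inequality $\pi_+(\Pi_\mu,h)\ge\int_E u_+(x,h)\,\mu(dx)$ is the main obstacle and requires a measurable selection: since $x\mapsto\Pi_x$ is a compact-valued, (upper hemi)continuous correspondence and $\Lambda$ is continuous, the argmax set $\Psi(x)=\{P\in\Pi_x:\Lambda(P)=u_+(x,h)\}$ is nonempty, compact, and measurable, so by the Kuratowski--Ryll-Nardzewski measurable selection theorem there is a measurable map $x\mapsto P_x^\ast\in\Psi(x)$; then $P^\ast:=\int_E P_x^\ast\,\mu(dx)$ lies in $\Pi_\mu$ (it solves the martingale problem with initial distribution $\mu$, by the same fibered verification as above, integrating the martingale identities in $x$), and $\Lambda(P^\ast)=\int_E\Lambda(P_x^\ast)\,\mu(dx)=\int_E u_+(x,h)\,\mu(dx)$, so $\pi_+(\Pi_\mu,h)\ge\Lambda(P^\ast)=\int_E u_+\,d\mu$. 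Combining the two inequalities yields \eqref{intrep2}. I expect the delicate points to be the measurability/upper hemicontinuity of $x\mapsto\Pi_x$ needed for the selection theorem, and the verification that the integrated measure $P^\ast$ (and the disintegrated $P_x$) actually solve the martingale problem — both of which are consequences of the characterization in Remark \ref{fdd} together with Condition \ref{H}, but should be spelled out carefully.
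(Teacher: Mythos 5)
Your argument is correct in outline, and the upper semicontinuity part is essentially the paper's own proof (maximizers $P_{n}\in\Pi_{x_n}$, compactness of $\cup_n\Pi_{x_n}\cup\Pi_x$ from Condition \ref{H}(c), continuity of $P\mapsto E^P[\int_0^\infty e^{-\lambda t}h(X(t))dt]$, and continuity of $P\mapsto P\circ X(0)^{-1}$ to identify the limit as an element of $\Pi_x$). For the identity (\ref{intrep2}), however, you take a genuinely different route. You disintegrate an arbitrary $P\in\Pi_\mu$ as $\int_E P_x\,\mu(dx)$ with $P_x\in\Pi_x$ a.e.\ for the inequality $\pi_+\le\int u_+\,d\mu$, and for the reverse inequality you build an optimal $P^\ast=\int_E P^\ast_x\,\mu(dx)$ via a Kuratowski--Ryll-Nardzewski measurable selection from the argmax correspondence. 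The paper instead establishes (quoting Lemmas 4.5.8 and 4.5.10 and Theorem 4.5.11(a) of \cite{EK86}) that $\mu\mapsto\pi_+(\Pi_\mu,h)$ is countably affine, and then obtains both inequalities by discretizing $\mu$: once by point masses $\sum_i\mu(E_i^n)\delta_{x_i^n}$ on a partition of mesh $1/n$ (using upper semicontinuity of $\pi_+$ in $\mu$), and once by the conditional measures $\mu_i^n$ (using reverse Fatou). Your route makes the optimal mixture explicit but imports selection-theorem machinery and requires you to verify (i) weak measurability of the compact-valued argmax correspondence, (ii) that the disintegration of a solution is a.e.\ a solution, and (iii) that a measurable mixture of solutions is a solution; the paper's route trades all of this for the affine property and a purely elementary approximation. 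Both approaches ultimately rest on the same structural facts --- convexity of $\Pi$ and decomposability of solutions by initial condition. One small correction: Lemmas \ref{rnlem} and \ref{ext} condition on positive-probability events (or stopping times with $E[H]>0$), not on points, so they do not by themselves give the a.e.\ statement $P_x\in\Pi_x$; for that you need the standard argument combining regular conditional probabilities with a countable family of martingale relations that determines membership in $\Pi$ (this is Lemma 4.5.9 of \cite{EK86}, which is the reference you should invoke there).
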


\begin{proof}
For $\pi_{+}$, the lemma is a combination of Theorem 4.5.11(a), Lemma 4.5.8, 
Lemma 4.5.9 and Lemma 4.5.10 of \cite{EK86}, but we recall here 
the main steps of the proof for the convenience of the 
reader. Throughout the proof, $h$ will be fixed and 
will be omitted. In addition we will use the notation $\pi_{+}(\Pi_{
\mu},h)=\pi_{+}(\mu )$. 

First of all let us show that, for $\mu_n\rightarrow\mu$, 
\[\limsup_{n\rightarrow \infty}\pi_{+}(\mu_n)\leq\pi_{+}(\mu ).\]
In fact, by the compactness of $\Pi_{\mu}$ there is $P\in\Pi_{\mu}$ that 
achieves the supremum. Moreover, by  
the compactness of $\Pi_{\mu}\cup\cup_n\Pi_{\mu_{n_i}}$, for every convergent 
subsequence $\{\pi_{+}(\mu_{n_i})\}=\left\{E^{P_{n_i}}[\int_0^{\infty}
e^{-\lambda t}h(X(t))dt]\right\}$, we can extract a 
subsequence $\left\{P_{n_{i_j}}\right\}$ that converges to some $
P\in\Pi_{\mu}$. 
Since $\int_0^{\infty}e^{-\lambda t}h(x(t))dt]$ is continuous on $
D_E[0,\infty )$, we then have  
\[\lim_{i\rightarrow\infty}\,\pi_{+}(\mu_{n_i})=\lim_{j\rightarrow\infty}\,E^{P_{n_{i_j}}}[\int_0^{\infty}
e^{-\lambda t}h(X(t))dt]=E^P[\int_0^{\infty}e^{-\lambda t}h(X(t))
dt]\leq\pi_{+}(\mu ).\]

This yields, in particular, the upper semicontinuity (and 
hence the measurability) of $u_{+}(x)=\pi_{+}(\delta_x)$. 

Next, Condition \ref{H} (b) and (c) implies that, for $\mu_1,\mu_
2\in {\cal P}(E)$, 
$0\leq\alpha\leq 1$, 
\[\pi_{+}(\alpha\mu_1+(1-\alpha )\mu_2)=\alpha\pi_{+}(\mu_1)+(1-\alpha 
)\pi_{+}(\mu_2),\]
(Theorem 4.5.11(a), Lemma 4.5.8 and Lemma 4.5.10 of 
\cite{EK86}. We will not recall this part of the 
proof). This yields, for $\{\mu_i\}\subset {\cal P}(E)$, $\alpha_
i\geq 0$, $\sum_i\alpha_i=1$: 
\[\pi_{+}(\sum_i\alpha_i\mu_i)=\sum_1^N\alpha_i\,\pi_{+}(\frac 1{
\sum_1^N\alpha_i}\sum_1^N\alpha_i\mu_i)+\sum_{N+1}^{\infty}\alpha_
i\,\pi_{+}(\frac 1{\sum_{N+1}^{\infty}\alpha_i}\sum_{N+1}^{\infty}
\alpha_i\mu_i),\]
and hence 
\[\Big|\pi_{+}(\sum_i\alpha_i\mu_i)-\sum_i\alpha_i\pi_{+}(\mu_i)\Big
|\leq 2\frac {||h||}{\lambda}\sum_{N+1}^{\infty}\alpha_i,\qquad\forall 
N.\]

Finally, for each $n$, let $\{E^n_i\}$ be a countable collection of disjoint 
subsets of $E$ with diameter less than $\frac 1n$ and such that 
$E=\bigcup_iE^n_i$. In addition let $x^n_i\in E^n_i$ satisfy 
$ $$u_{+}(x^n_i)\geq\sup_{E^n_i}u_{+}(x)-\frac 1n$. Define $u_n(x
)=\sum_iu_{+}(x^n_i)1_{E^n_i}(x)$ 
and $\mu_n=\sum_i\mu (E^n_i)\delta_{x^n_i}.$ 
Then $\{u_n\}$ converges to $u_{+}$ pointwise and boundedly, and 
$\{\mu_n\}$ converges to $\mu$. Therefore 
\[\int_Eu_{+}(x)\mu (dx)=\lim_n\int_Eu_n(x)\mu (dx)=\lim_n\sum_i\pi_{
+}(\delta_{x^n_i})\mu (E^n_i)=\lim_n\,\pi_{+}(\mu_n)\leq\pi_{+}(\mu 
).\]
To prove the opposite inequality, let 
$\mu_i^n(B)=\mu (B\cap E^n_i)/\mu (E^n_i)$, for $\mu (E^n_i)>0$, and 
$u_n(x)=\sum_i\pi_{+}(\mu^n_i)1_{E^n_i}(x)$. For each $x\in E$, for every $
n$ 
there exists a (unique) $i(n)$ such that $x\in E_{i(n)}^n$. Then 
$u_n(x)=\pi_{+}(\mu^n_{i(n)})$ and $\mu^n_{i(n)}\rightarrow\delta_
x$, hence $\limsup_nu_n(x)\leq u_{+}(x).$ 
Therefore 
\[\pi_{+}(\mu )=\pi_{+}(\sum_i\mu (E^n_i)\,\mu_i^n)=\int_Eu_n(x)\mu 
(dx)\leq\int_E\limsup_n\,u_n(x)\,\mu (dx)\leq\int_Eu_{+}(x)\mu (d
x),\]
where the last but one inequality follows from the fact 
that the $u_n$ are uniformly bounded. 

To prove the assertion 
for $\pi_{-}$ use the fact that $\pi_{-}(\Pi_{\mu},h)=-\pi_{+}(\Pi_{
\mu},-h)$.
\end{proof}

\begin{lemma}\label{subsol}Assume 
that Condition \ref{H} holds.  Then $u_{+}$ is a viscosity 
subsolution of (\ref{reso}) and $u_{-}$ is a viscosity 
supersolution of the same equation.  
\end{lemma}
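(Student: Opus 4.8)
The plan is to prove that $u_{+}$ is a viscosity subsolution of $(\ref{reso})$; the statement for $u_{-}$ then follows by the same argument applied to $-h$, using $\pi_{-}(\Pi_{\mu},h)=-\pi_{+}(\Pi_{\mu},-h)$ and $u_{-}(x,h)=-u_{+}(x,-h)$, together with the fact that this identity interchanges upper and lower semicontinuity. So fix $\lambda>0$ and $h\in C_b(E)$. By Lemma \ref{intrep}, $u_{+}=u_{+}(\cdot,h)$ is upper semicontinuous, so the first requirement in Definition \ref{visc}(a) is met. Now suppose $(f,g)\in A$ and $x_0\in E$ satisfy $\sup_x(u_{+}-f)(x)=(u_{+}-f)(x_0)$; I must show $\lambda u_{+}(x_0)-g(x_0)\leq h(x_0)$.

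The key step is a dynamic-programming argument of the type sketched in Remark \ref{do-in-v}, but now applied to the whole family $\Pi_x$ rather than to a fixed solution. First I would establish the \emph{dynamic programming inequality}: for every $P\in\Pi_{x_0}$ and every $t\geq 0$,
\[
u_{+}(x_0)\;\geq\;E^P\!\Big[\int_0^t e^{-\lambda s}h(X(s))\,ds + e^{-\lambda t}u_{+}(X(t))\Big].
\]
This follows from the Markov-type structure encoded in Lemma \ref{rnlem}: conditioning on $\mathcal{F}_t$, the shifted process $X(t+\cdot)$ under the regular conditional distribution is (the distribution of) a solution of the martingale problem started from $X(t)$, so its discounted integral of $h$ is at most $u_{+}(X(t))$ by definition of $u_{+}$; integrating $e^{-\lambda t}$ times this bound and adding the contribution of $[0,t]$ gives the displayed inequality. (More precisely one disintegrates $P$ over $X(t)$ into elements of $\Pi_{X(t)}$, using Condition \ref{H}(b)--(c) to guarantee the relevant measurable selection / regular conditional distribution lands in $\Pi$; this is exactly the content behind Lemma \ref{intrep}, so I can invoke that machinery rather than redo it.)

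Next I combine this with the contact condition. Since $u_{+}-f$ attains its supremum at $x_0$, we have $u_{+}(X(t))\le f(X(t)) + (u_{+}-f)(x_0) = f(X(t)) + u_{+}(x_0)-f(x_0)$ pointwise. Substituting into the dynamic programming inequality and rearranging,
\[
0\;\ge\; E^P\!\Big[\int_0^t e^{-\lambda s}h(X(s))\,ds\Big] + E^P\!\big[e^{-\lambda t}f(X(t))\big] - f(x_0) + (1-E^P[e^{-\lambda t}])\,(u_{+}(x_0)-f(x_0)).
\]
Now apply Lemma \ref{resid} to $(f,g)\in A$ under $P\in\Pi_{x_0}$: $E^P[e^{-\lambda t}f(X(t))] - f(x_0) = -E^P[\int_0^t e^{-\lambda s}(\lambda f(X(s))-g(X(s)))\,ds]$. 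Inserting this, dividing by $t$, and letting $t\downarrow 0$, the dominated convergence theorem together with the continuity of $h$, $f$, $g$ and the right-continuity of $X$ at $0$ (with $X(0)=x_0$) yields $h(x_0) - \lambda f(x_0) + g(x_0) + \lambda(u_{+}(x_0)-f(x_0)) \ge 0$, i.e. $\lambda u_{+}(x_0) - g(x_0) \le h(x_0)$, as desired.

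I expect the main obstacle to be the rigorous justification of the dynamic programming inequality, specifically the measurable disintegration of $P\in\Pi_{x_0}$ over the time-$t$ position into members of $\Pi_{X(t)}$ and the measurability of $u_{+}$ needed to integrate it — but this is precisely what Lemma \ref{intrep} (and the underlying results of \cite{EK86}, Section 4.5) provides, so in the write-up I would phrase the argument so as to quote Lemma \ref{intrep} and Lemma \ref{rnlem} directly. The only other mild technical point is interchanging the $t\downarrow 0$ limit with the expectation, which is routine given the uniform bounds $\|h\|$, $\|f\|$, $\|g\|$ and boundedness of $u_{+}$.
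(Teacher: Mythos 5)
Your overall architecture (conditioning at a small time, the contact condition $u_{+}\le f+(u_{+}-f)(x_0)$, Lemma \ref{resid} applied to $f$, divide by $t$ and let $t\downarrow 0$) is the same as the paper's, and the use of a deterministic time $t$ in place of the paper's stopping time $\tau_\epsilon$ would be fine here. But the central ``dynamic programming inequality'' is stated in the wrong direction, and this is not a typo one can wave away: it breaks the logic. Your own justification (condition on ${\cal F}_t$; the shifted process is a solution started at $X(t)$, so its continuation value is at most $u_{+}(X(t))$) proves
\[
E^P\Big[\int_0^{\infty}e^{-\lambda s}h(X(s))\,ds\Big]\;\le\;E^P\Big[\int_0^{t}e^{-\lambda s}h(X(s))\,ds+e^{-\lambda t}u_{+}(X(t))\Big],
\]
which, for a general $P\in\Pi_{x_0}$, yields no relation at all between $u_{+}(x_0)$ and the right-hand side (both dominate the left-hand side). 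The inequality you wrote, $u_{+}(x_0)\ge E^P[\cdots]$ for \emph{every} $P$, is the super-dynamic-programming direction; it would require pasting near-optimal continuations onto $P$ after time $t$, which is a different (and harder) argument, and in any case it is useless for the subsolution property: substituting the upper bound $u_{+}(X(t))\le f(X(t))+(u_{+}-f)(x_0)$ into the right-hand side of a ``$\ge$'' enlarges that side and destroys the inequality. This is why your subsequent display and the final algebra carry sign errors (e.g.\ the term $+(1-E^P[e^{-\lambda t}])(u_{+}(x_0)-f(x_0))$ should appear with the opposite sign, and your last displayed inequality is not equivalent to $\lambda u_{+}(x_0)-g(x_0)\le h(x_0)$).

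The missing ingredient is the one step you never invoke: by Condition \ref{H}(c), $\Pi_{x_0}$ is compact, so there is an \emph{optimizing} $P\in\Pi_{x_0}$ with $u_{+}(x_0)=E^P[\int_0^{\infty}e^{-\lambda s}h(X(s))\,ds]$. For that $P$ your conditioning argument gives the inequality in the correct direction,
\[
u_{+}(x_0)\;\le\;E^P\Big[\int_0^{t}e^{-\lambda s}h(X(s))\,ds+e^{-\lambda t}u_{+}(X(t))\Big],
\]
and now the contact condition, Lemma \ref{resid}, and the $t\downarrow 0$ limit give $\lambda(u_{+}-f)(x_0)\le h(x_0)-\lambda f(x_0)+g(x_0)$, i.e.\ the subsolution inequality. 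This is exactly what the paper does, except that it works with the exit time $\tau_\epsilon$ and the shifted law $P^{\tau_\epsilon,H_\epsilon}$ of Lemma \ref{rnlem} together with the identity $\pi_{+}(\Pi_{\mu_\epsilon},h)=\mu_\epsilon u_{+}$ from Lemma \ref{intrep}, which neatly avoids any regular-conditional-distribution argument. With the optimizer inserted and the signs repaired, your proof becomes correct and essentially coincides with the paper's.
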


\begin{proof}
Since 
$u_{-}(x,h)=-u_{+}(x,-h)$ it is enough to consider $u_{+}$.  Let 
$(f,g)\in A$.  Suppose $x_0$ is a point such that 
$u_{+}(x_0)-f(x_0)=\sup_x(u_{+}(x)-f(x))$.  Since we can always 
add a constant to $f$, we can assume $u_{+}(x_0)-f(x_0)=0$.  
By compactness (Condition \ref{H}(c)), we have 
\[u_{+}(x_0)=E^P\left[\int_0^{\infty}e^{-\lambda t}h(X(t)dt\right
]\]
for some $P\in\Pi_{x_0}$.

For $\epsilon >0$, define 
\begin{equation}\tau_{\epsilon}=\epsilon\wedge\inf\{t>0:r(X(t),x_0)\geq\epsilon\mbox{\rm \ or }
r(X(t-),x_0)\geq\epsilon \}\label{tau}\end{equation}
and let $H_{\epsilon}=e^{-\lambda\tau_{\epsilon}}$. 
Then, by Lemma \ref{resid},
\begin{eqnarray*}
0&&=u_{+}(x_0)-f(x_0)\\
&&=E^P\left[\int_0^{\infty}e^{-\lambda t}\left(h(X(t))-\lambda f(
X(t))+g(X(t))\right)dt\right]\\
&&=E^P\left[\int_0^{\tau_{\epsilon}}e^{-\lambda t}\left(h(X(t))-\lambda 
f(X(t))+g(X(t))\right)dt\right]\\
&&\qquad\qquad\quad +E^P\left[e^{-\lambda\tau_{\epsilon}}\int_0^{
\infty}e^{-\lambda t}\left(h(X(t+\tau_{\epsilon}))-\lambda f(X(t+
\tau_{\epsilon}))+g(X(t+\tau_{\epsilon}))\right)dt\right]\\
&&=E^P\left[\int_0^{\tau_{\epsilon}}e^{-\lambda t}\left(h(X(t))-\lambda 
f(X(t))+g(X(t))\right)dt\right]\\
&&\qquad\quad\qquad +E^P[H_{\epsilon}]E^{P^{\tau_{\epsilon},H_{\epsilon}}}\left
[\int_0^{\infty}e^{-\lambda t}\left(h(X(t))-\lambda f(X(t))+g(X(t
))\right)dt\right].\\
\end{eqnarray*}
Setting $\mu_{\epsilon}(\cdot )=P^{\tau_{\epsilon},H_{\epsilon}}(
X(0)\in\cdot )$, by Lemma 
\ref{rnlem} and Lemma \ref{resid}, the above chain of 
equalities can be continued as (with the notation 
$(\mbox{\rm \ref{notat-i}})$) 
\[\begin{array}{rcl}
&&\leq E^P\left[\int_0^{\tau_{\epsilon}}e^{-\lambda t}\left(h(X(t
))-\lambda f(X(t))+g(X(t))\right)dt\right]+E^P[H_{\epsilon}](\pi_{
+}(\Pi_{\mu_{\epsilon}},h)-\mu_{\epsilon}f)\end{array}
,\]
and, by Lemma \ref{intrep}, 
\begin{eqnarray*}
&&=E^P\left[\int_0^{\tau_{\epsilon}}e^{-\lambda t}\left(h(X(t))-\lambda 
f(X(t))+g(X(t))\right)dt\right]+E^P[H_{\epsilon}](\mu_{\epsilon}u_{
+}-\mu_{\epsilon}f)\\
&&=E^P\left[\int_0^{\tau_{\epsilon}}e^{-\lambda t}\left(h(X(t))-\lambda 
f(X(t))+g(X(t))\right)dt\right]+E^P\left[e^{-\lambda\tau_{\epsilon}}
(u_{+}(X(\tau_{\epsilon}))-f(X(\tau_{\epsilon}))\right]\\
&&\leq E^P\left[\int_0^{\tau_{\epsilon}}e^{-\lambda t}\left(h(X(t
))-\lambda f(X(t))+g(X(t))\right)dt\right],\end{eqnarray*}
where the last inequality uses the fact that 
$u_{+}-f\leq 0$. Therefore 
\begin{eqnarray*}
0&\leq&\lim_{\epsilon\rightarrow 0}\frac {E^P\left[\int_0^{\tau_{
\epsilon}}e^{-\lambda t}\left(h(X(t))-\lambda f(X(t))+g(X(t))\right
)dt\right]}{E^P[\tau_{\epsilon}]}\\
&=&h(x_0)-\lambda f(x_0)+g(x_0)\\
&=&h(x_0)-\lambda u_{+}(x_0)+g(x_0).\end{eqnarray*}
\end{proof}

\begin{corollary}\label{sol} 
Let $h\in C_b(E)$.  If in addition to Condition \ref{H},
the comparison principle holds for 
equation (\ref{reso}), then 
$u=u_{+}=u_{-}$ is the unique viscosity solution of equation 
(\ref{reso}).  
\end{corollary}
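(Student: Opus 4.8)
The plan is to combine Lemma \ref{subsol} with the comparison principle and the trivial inequality $u_-\le u_+$. First I would note that, since Condition \ref{H}(b) guarantees $\Pi_x\neq\emptyset$ for every $x\in E$, the definitions (\ref{uplus}) and (\ref{uminus}) immediately give $u_-(x)\le u_+(x)$ for all $x$, an infimum over a nonempty set being at most the corresponding supremum. On the other hand, by Lemma \ref{subsol}, $u_+$ is a viscosity subsolution and $u_-$ a viscosity supersolution of (\ref{reso}), so the comparison principle yields $u_+\le u_-$ pointwise. The two inequalities force $u_+=u_-=:u$.

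Next I would verify that $u$ is a viscosity solution of (\ref{reso}) in the sense of Definition \ref{visc}. Because $u$ equals the viscosity subsolution $u_+$ it is upper semicontinuous, and because it equals the viscosity supersolution $u_-$ it is lower semicontinuous; hence $u$ is continuous, and since $|u(x)|\le\lambda^{-1}\|h\|$ directly from (\ref{uplus})--(\ref{uminus}), we have $u\in C_b(E)$. Being at once $u_+$ and $u_-$, the function $u$ is simultaneously a viscosity subsolution and a viscosity supersolution, i.e.\ a viscosity solution.

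For uniqueness, let $v\in C_b(E)$ be any viscosity solution of (\ref{reso}). As a subsolution, $v$ satisfies $v\le u_-=u$ by the comparison principle; as a supersolution, it satisfies $u=u_+\le v$. Hence $v=u$. I do not expect a genuine obstacle: the only point requiring care is that the elementary inequality $u_-\le u_+$ runs opposite to the comparison inequality $u_+\le u_-$, and it is precisely this clash that pins down equality and, with it, both existence and uniqueness of the viscosity solution.
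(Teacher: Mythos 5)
Your proof is correct and is exactly the argument the paper intends: the corollary is stated without proof precisely because it follows from Lemma \ref{subsol} (giving that $u_+$ is a subsolution and $u_-$ a supersolution), the comparison principle (giving $u_+\le u_-$), the trivial inequality $u_-\le u_+$ from Condition \ref{H}(b), and one more application of comparison for uniqueness. Your additional checks that $u\in C_b(E)$ (continuity from the two semicontinuities, boundedness by $\lambda^{-1}\|h\|$) are a welcome bit of care but do not change the route.
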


\begin{theorem}\label{mgpb} 
Assume that Condition \ref{H} holds.  For $\lambda >0$, let $M_{\lambda}$ 
be the set of $h\in C_b(E)$, such that the comparison 
principle holds for (\ref{reso}).  If for each $\lambda >0$, 
$M_{\lambda}$ is separating, then uniqueness holds for the martingale 
problem for $A$ in $D_E[0,\infty )$.  
\end{theorem}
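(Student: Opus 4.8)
The plan is to deduce the hypothesis of Theorem \ref{lapunq} from the comparison principle together with the properties of $u_{+}$ and $u_{-}$ already established, and then invoke that theorem directly. The point is that the comparison principle, for those $h$ where it holds, forces $u_{+}(\cdot,h)$ and $u_{-}(\cdot,h)$ to coincide, which says exactly that the Laplace-transform functional $P\mapsto E^{P}[\int_0^{\infty}e^{-\lambda t}h(X(t))\,dt]$ does not depend on the choice of solution, first for solutions started at a point and then, via the integral representation, for solutions with an arbitrary initial law.

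First I would fix $\lambda>0$ and $h\in M_{\lambda}$. By Lemma \ref{subsol}, $u_{+}(\cdot,h)$ is a viscosity subsolution and $u_{-}(\cdot,h)$ a viscosity supersolution of (\ref{reso}). Since $h\in M_{\lambda}$, the comparison principle applies and gives $u_{+}\leq u_{-}$ pointwise on $E$. On the other hand, Condition \ref{H}(b) guarantees $\Pi_{x}\neq\emptyset$, so trivially $u_{-}(x,h)\leq u_{+}(x,h)$ for every $x\in E$. Hence $u_{+}(\cdot,h)=u_{-}(\cdot,h)=:u$, which is the content of Corollary \ref{sol}; in particular the functional $P\mapsto E^{P}[\int_0^{\infty}e^{-\lambda t}h(X(t))\,dt]$ takes the single value $u(x)$ on all of $\Pi_{x}$.

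Next I would pass to arbitrary initial distributions using the integral representation. By Lemma \ref{intrep}, for every $\mu\in{\cal P}(E)$,
\[
\pi_{+}(\Pi_{\mu},h)=\int_{E}u_{+}(x,h)\,\mu(dx)=\int_{E}u_{-}(x,h)\,\mu(dx)=\pi_{-}(\Pi_{\mu},h),
\]
so $P\mapsto E^{P}[\int_0^{\infty}e^{-\lambda t}h(X(t))\,dt]$ is constant on $\Pi_{\mu}$. Consequently, if $X$ and $Y$ are any two solutions of the martingale problem for $A$ in $D_{E}[0,\infty)$ with the same initial distribution $\mu$, then
\[
E\Big[\int_0^{\infty}e^{-\lambda t}h(X(t))\,dt\Big]=\pi_{+}(\Pi_{\mu},h)=E\Big[\int_0^{\infty}e^{-\lambda t}h(Y(t))\,dt\Big],
\]
which is precisely (\ref{lapid}) for this $h$. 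Since $h\in M_{\lambda}$ was arbitrary and $M_{\lambda}$ is separating, the hypothesis of Theorem \ref{lapunq} holds for every $\lambda>0$, and that theorem then yields that $X$ and $Y$ have the same distribution on $D_{E}[0,\infty)$; in view of Definition \ref{uniq} this is the asserted uniqueness.

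I do not expect a genuine obstacle: the substantive work — that $u_{\pm}$ are semisolutions (Lemma \ref{subsol}), that they admit the integral representation against $\mu$ (Lemma \ref{intrep}), and that equality of Laplace transforms forces equality of one-dimensional and hence finite-dimensional distributions (Theorem \ref{lapunq}, via Theorem \ref{1dfid}) — is already in place. The only point requiring care is the reduction: the comparison principle is stated for a fixed $h$ and only asserts ``subsolution $\leq$ supersolution'', so one must combine it with the a priori inequality $u_{-}\leq u_{+}$ (valid because $\Pi_{x}\neq\emptyset$) to obtain genuine equality, and then be sure that this function-level identity is promoted, through Lemma \ref{intrep}, to an identity of the Laplace-transform functionals for solutions with possibly non-degenerate initial laws, which is exactly the input consumed by Theorem \ref{lapunq}.
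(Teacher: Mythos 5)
Your argument is correct and follows the paper's own proof essentially verbatim: comparison principle plus Lemma \ref{subsol} gives $u_{+}=u_{-}$ (with the a priori inequality $u_{-}\leq u_{+}$ noted explicitly, as in Corollary \ref{sol}), Lemma \ref{intrep} lifts the identity to arbitrary initial distributions, and Theorem \ref{lapunq} concludes. No gaps; your version merely spells out steps the paper leaves implicit.
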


\begin{proof}
If the comparison principle for (\ref{reso}) holds for some 
$h\in C_b(E)$, then by Lemma \ref{subsol}, $u_{+}=u_{-}$.  Then, 
by the definition of $u_{+}$ and $u_{-}$ and Lemma \ref{intrep}, for any two solutions  
$P_1,P_2\in\Pi_{\mu}$, we must have
\[E^{P_1}[\int_0^{\infty}e^{-\lambda t}h(X(t))dt]=E^{P_2}[\int_0^{
\infty}e^{-\lambda t}h(X(t))dt].\]
Consequently, if $M_{\lambda}$ is separating, Theorem \ref{lapunq} 
implies $P_1=P_2$.
\end{proof}

\begin{remark}\label{extunq}
Another way of viewing the role of the comparison 
principle in the issue of uniqueness for the martingale 
problem for $A$ is the following. 

Suppose the comparison principle holds for some $h$ and 
let $u_{+}=u_{-}=u$. By Lemmas \ref{intrep} and \ref{ext}, 
\[u(X(t))-\int_0^t(\lambda u(X(s))-h(X(s)))ds,\]
is a $\left\{{\cal F}_t^X\right\}$-martingale for every $P\in\Pi$. 
Then, even though 
$u_{+}$ and $u_{-}$ are defined in nonlinear ways, the linearity of 
the martingale property ensures  
that $A$ can be extended to the linear span $A^u$ of 
$A\cup \{(u,\lambda u-h)\}$ and every solution of the martingale 
problem for $A$ will be a solution of the martingale 
problem for $A^u$. By applying this procedure to 
all functions $h\in M_{\lambda}$, we obtain an extension $\hat {A}$ of $
A$ such 
that every solution of the martingale 
problem for $A$ will be a solution of the martingale 
problem for $\hat {A}$ and such that ${\cal R}(\lambda -\hat {A})
\supset M_{\lambda}$ and hence is 
separating. Therefore uniqueness follows from Corollary 
\ref{rngunq}.

Notice that, even if the comparison 
principle does not hold, under Condition \ref{H}, by 
Lemma 4.5.18 of \cite{EK86}, for each $\mu\in {\cal P}(E)$, there 
exists $P\in\Pi_{\mu}$ such that under $P$ 
\[u_{+}(X(t))-\int_0^t(\lambda u_{+}(X(s))-h(X(s)))ds\]
is a $\{{\cal F}_t^X\}$-martingale.
\end{remark}\medskip

\begin{remark}
If, for some $h$, there exists $(u,g)\in\hat {A}$ such that $\lambda 
u-g=h$ 
(essentially $u$ is the analog of a stochastic solution as 
defined in \cite{SV72b}), then, by Lemma \ref{resid} and 
Remark \ref{do-in-v}, $u$ is a viscosity solution of $(\mbox{\rm \ref{reso}}
)$.
\end{remark}\medskip

\section{Alternative definitions of viscosity solution} \label{altdef}
Different definitions of viscosity solution may be useful, 
depending on the setting.  Here we discuss two other 
possibilities.  As mentioned in the Introduction, the first, 
which is stated in terms of 
solutions of the martingale problem, is a modification of 
a definition used in 
\cite{EKTZ14}, while the second is a stronger version 
of definitions in \cite{FK06}. 
We show that Lemma \ref{subsol} still holds under these 
alternative definitions and hence all the results of Section 
\ref{mgpunq} carry over. Both definitions are stronger in 
the sense that the inequalities (\ref{subeq}) and 
(\ref{supeq}) required by the previous definition are 
required by these definitions. Consequently, in both 
cases, it should be easier to prove the comparison 
principle.

${\cal T}$ will denote the set of $\{{\cal F}_t^X\}$-stopping 
times.  

\begin{definition}\label{visc2}{\bf (Stopped viscosity (semi)solution)}

Let $A\subset C_b(E)\times C_b(E)$, $\lambda >0$, and $h\in C_b(E
)$.

\begin{itemize}

\item[a)]$u\in B(E)$ is a {\em stopped viscosity subsolution\/} of (\ref{reso})
if and only if $u$ is upper semicontinuous  and 
if $(f,g)\in A$, $x_0\in E$, and there exists 
a strictly positive $\tau_0\in {\cal T}$ such that
\begin{equation}\sup_{P\in\Pi_{x_0},\,\tau\in {\cal T}}\frac {E^P
[e^{-\lambda\tau\wedge\tau_0}(u-f)(X(\tau\wedge\tau_0))]}{E^P[e^{
-\lambda\tau\wedge\tau_0}]}=(u-f)(x_0),\label{subtest2}\end{equation}
 then 
\begin{equation}\lambda u(x_0)-g(x_0)\leq h(x_0).\label{subeq2}\end{equation}

\item[b)]$u\in B(E)$ is a {\em stopped viscosity supersolution\/} of (\ref{reso})
if and only if $u$ is lower semicontinuous  and 
if $(f,g)\in A$,  $x_0\in E$, and there exists 
a strictly positive $\tau_0\in {\cal T}$ such that
\begin{equation}\inf_{P\in\Pi_{x_0},\,\tau\in {\cal T}}\frac {E^P
[e^{-\lambda\tau\wedge\tau_0}(u-f)(X(\tau\wedge\tau_0))]}{E^P[e^{
-\lambda\tau\wedge\tau_0}]}=(u-f)(x_0),\label{suptest2}\end{equation}
then
\begin{equation}\lambda u(x_0)-g(x_0)\geq h(x_0).\label{supeq2}\end{equation}

\end{itemize}

A function $u\in C_b(E)$ is a 
{\em stopped viscosity solution\/} of (\ref{reso}) if it is both a 
subsolution and a supersolution.  

\end{definition}

\begin{remark}
If $(u-f)(x_0)=\sup_x (u-f)(x)$, then (\ref{subtest2}) is satisfied. Consequently,
every stopped sub/supersolution in the sense of Definition \ref{visc2} is a 
sub/supersolution in the sense of Definition \ref{visc}.
\end{remark}\medskip

\begin{remark}
Definition \ref{visc2} requires $(\ref{subeq2})$ 
($(\ref{supeq2})$) to hold only at points $x_0$ for which 
$(\ref{subtest2})$ ($(\ref{suptest2})$) is verified for 
some $\tau_0$. Note that, as in Definition \ref{visc}, such an $x_
0$ might not exist. 

Definition \ref{visc2} essentially requires a local maximum 
principle and is  related to the notion of
characteristic operator as given in \cite{Dyn65}.
\end{remark}\medskip

For Definition \ref{visc2}, we have the following analog 
of Lemma \ref{subsol}.

\begin{lemma}\label{subsol2}Assume 
that Condition \ref{H} holds.  Then 
$u_{+}$ given by (\ref{uplus}) is a stopped viscosity 
subsolution of (\ref{reso}) and $u_{-}$ given by (\ref{uminus})
is a stopped viscosity 
supersolution of the same equation.  
\end{lemma}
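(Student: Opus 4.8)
The plan is to adapt the proof of Lemma \ref{subsol} to the ``stopped'' test in Definition \ref{visc2}. As before, by the symmetry $u_{-}(x,h)=-u_{+}(x,-h)$ it suffices to treat $u_{+}$. Fix $(f,g)\in A$ and suppose $x_0\in E$ and a strictly positive $\tau_0\in\mathcal{T}$ satisfy \eqref{subtest2}; since adding a constant to $f$ changes neither side of \eqref{subtest2}, we may normalize so that $(u_{+}-f)(x_0)=0$, so that \eqref{subtest2} becomes
\[
\sup_{P\in\Pi_{x_0},\,\tau\in\mathcal{T}}\frac{E^P[e^{-\lambda(\tau\wedge\tau_0)}(u_{+}-f)(X(\tau\wedge\tau_0))]}{E^P[e^{-\lambda(\tau\wedge\tau_0)}]}=0.
\]
By compactness (Condition \ref{H}(c)), pick $P\in\Pi_{x_0}$ achieving $u_{+}(x_0)=E^P[\int_0^\infty e^{-\lambda t}h(X(t))\,dt]$.

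The key step is to run the same decomposition as in Lemma \ref{subsol} but now stopping at $\sigma_\epsilon:=\tau_\epsilon\wedge\tau_0$, where $\tau_\epsilon$ is the exit-time-type stopping time \eqref{tau}; note $\sigma_\epsilon$ is strictly positive and $\sigma_\epsilon\to 0$ as $\epsilon\to 0$ (using $\tau_0>0$). Writing $H_\epsilon=e^{-\lambda\sigma_\epsilon}$, Lemma \ref{resid} and the strong Markov/shift argument of Lemmas \ref{rnlem} and \ref{ext} give
\[
0=u_{+}(x_0)-f(x_0)=E^P\!\left[\int_0^{\sigma_\epsilon}e^{-\lambda t}\bigl(h(X(t))-\lambda f(X(t))+g(X(t))\bigr)dt\right]+E^P[H_\epsilon]\bigl(\mu_\epsilon u_{+}-\mu_\epsilon f\bigr),
\]
where $\mu_\epsilon(\cdot)=P^{\sigma_\epsilon,H_\epsilon}(X(0)\in\cdot)$, using Lemma \ref{intrep} to replace $\pi_{+}(\Pi_{\mu_\epsilon},h)$ by $\mu_\epsilon u_{+}$. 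Now the crucial point: the second term equals $E^P[e^{-\lambda\sigma_\epsilon}(u_{+}-f)(X(\sigma_\epsilon))]$, and since $\sigma_\epsilon=\tau\wedge\tau_0$ with $\tau=\tau_\epsilon\in\mathcal{T}$, the normalized form of \eqref{subtest2} forces $E^P[e^{-\lambda\sigma_\epsilon}(u_{+}-f)(X(\sigma_\epsilon))]\le 0$. Hence
\[
0\le E^P\!\left[\int_0^{\sigma_\epsilon}e^{-\lambda t}\bigl(h(X(t))-\lambda f(X(t))+g(X(t))\bigr)dt\right].
\]
Dividing by $E^P[\sigma_\epsilon]>0$ and letting $\epsilon\to 0$, continuity of $h,f,g$ at $x_0$ together with $\sigma_\epsilon\to 0$ and $X(0)=x_0$ yields $0\le h(x_0)-\lambda f(x_0)+g(x_0)=h(x_0)-\lambda u_{+}(x_0)+g(x_0)$, which is \eqref{subeq2}. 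The supersolution statement for $u_{-}$ follows by applying this to $-h$.

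The main obstacle is verifying that the term $E^P[H_\epsilon]\bigl(\mu_\epsilon u_{+}-\mu_\epsilon f\bigr)$ really coincides with $E^P[e^{-\lambda\sigma_\epsilon}(u_{+}-f)(X(\sigma_\epsilon))]$ and that $\sup$ in \eqref{subtest2} is over a class of stopping times that includes our $\sigma_\epsilon=\tau_\epsilon\wedge\tau_0$; this is where one must be careful that $\tau_\epsilon\wedge\tau_0$ is indeed of the form $\tau\wedge\tau_0$ with $\tau\in\mathcal{T}$ (take $\tau=\tau_\epsilon$) and that the definition of $\mathcal{T}$ (stopping times of the canonical filtration $\{\mathcal{F}^X_t\}$) makes $\tau_\epsilon$ admissible. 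One should also check the $\epsilon\to 0$ limit of the quotient exactly as in Lemma \ref{subsol}, noting that the only change is the extra clipping by $\tau_0$, which does not affect the limit since $\tau_\epsilon<\tau_0$ for all small $\epsilon$ on the event $\{\tau_0>0\}$, i.e. $P$-a.s. Everything else is a routine transcription of the proof of Lemma \ref{subsol}.
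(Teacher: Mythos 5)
You are correct, and your argument is essentially the paper's: stop at $\tau_{\epsilon}\wedge\tau_0$, use (\ref{subtest2}) (normalized so that $(u_{+}-f)(x_0)=0$) to discard the term $E^P\left[e^{-\lambda(\tau_{\epsilon}\wedge\tau_0)}(u_{+}-f)(X(\tau_{\epsilon}\wedge\tau_0))\right]$, and pass to the limit exactly as in Lemma \ref{subsol}. The one imprecision is that your displayed decomposition should read ``$\leq$'' rather than ``$=$'', since $E^{P^{\sigma_{\epsilon},H_{\epsilon}}}\left[\int_0^{\infty}e^{-\lambda t}h(X(t))\,dt\right]\leq\pi_{+}(\Pi_{\mu_{\epsilon}},h)=\mu_{\epsilon}u_{+}$ and equality is not guaranteed; as this inequality points in the direction you use, the conclusion is unaffected.
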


\begin{proof}
Let $(f,g)\in A$.  Suppose $x_0$ is a point such that 
$(\ref{subtest2})$ holds for $u_{+}$ for some 
$\tau_0\in {\cal T}$, $\tau_0>0$. 
Since we can always 
add a constant to $f$, we can assume $u_{+}(x_0)-f(x_0)=0$.  
By the same arguments used in the proof of Lemma 
\ref{subsol}, defining $\tau_{\epsilon}$ and $H_{\epsilon}$ in the same way, we 
obtain, for some $P\in\Pi_{x_0}$(independent of $\epsilon )$, 
\begin{eqnarray*}
0&&=u_{+}(x_0)-f(x_0)\\
&&\leq E^P\left[\int_0^{\tau_{\epsilon}\wedge\tau_0}e^{-\lambda t}\left
(h(X(t))-\lambda f(X(t))+g(X(t))\right)dt\right]\\
&&\qquad\qquad\quad +E^P\left[e^{-\lambda\tau_{\epsilon}\wedge\tau_
0}(u_{+}(X(\tau_{\epsilon}\wedge\tau_0))-f(X(\tau_{\epsilon}\wedge
\tau_0))\right]\\
&&\leq E^P\left[\int_0^{\tau_{\epsilon}\wedge\tau_0}e^{-\lambda t}\left
(h(X(t))-\lambda f(X(t))+g(X(t))\right)dt\right],\end{eqnarray*}
where the last inequality uses $(\ref{subtest2})$ 
and the fact that 
$u_{+}(x_0)-f(x_0)=0$.  Then the result follows as in 
Lemma \ref{subsol}.
\end{proof}

The following is essentially Definition 7.1 of \cite{FK06}.

\begin{definition}\label{visc3}{\bf (Sequential viscosity (semi)solution)}

Let $A\subset C_b(E)\times C_b(E)$, $\lambda >0$, and $h\in C_b(E
)$.

\begin{itemize}

\item[a)]$u\in B(E)$ is a {\em sequential viscosity subsolution }
of (\ref{reso})
if and only if $u$ is upper semicontinuous  and 
for each $(f,g)\in A$ and each sequence $y_n\in E$ satisfying
\begin{equation}\lim_{n\rightarrow\infty}(u-f)(y_n)=\sup_x(u-f)(x
),\label{subtest3}\end{equation}
we have
\begin{equation}\limsup_{n\rightarrow\infty}(\lambda u(y_n)-g(y_n
)-h(y_n))\leq 0.\label{subeq3}\end{equation}

\item[b)]$u\in B(E)$ is a {\em sequential viscosity supersolution\/} of (\ref{reso})
 
if and only if $u$ is lower semicontinuous  and 
for each $(f,g)\in A$ and each sequence $y_n\in E$ satisfying
\begin{equation}\lim_{n\rightarrow\infty}(u-f)(y_n)=\inf_x(u-f)(x
),\label{suptest3}\end{equation}
we have
\begin{equation}\liminf_{n\rightarrow\infty}(\lambda u(y_n)-g(y_n
)-h(y_n))\geq 0.\label{supeq3}\end{equation}
\end{itemize}

A function $u\in C_b(E)$ is a 
{\em sequential viscosity solution\/} of (\ref{reso}) if it is both a 
subsolution and a supersolution.  
\end{definition}

\begin{remark}\label{3vs1}
For $E$ compact, every viscosity sub/supersolution is a  
sequential viscosity sub/supersolution.
\end{remark}\medskip

For sequential viscosity semisolutions, we have the following analog 
of Lemma \ref{subsol}.  $C_{b,u}(E)$ denotes the space of 
bounded, uniformly continuous functions on $E$.

\begin{lemma}\label{subsol3}
For $\epsilon >0$, define 
\[\tau_{\epsilon}=\epsilon\wedge\inf\{t>0:r(X(t),X(0))\geq\epsilon\mbox{\rm \ or }
r(X(t-),X(0))\geq\epsilon \}.\]
Assume $A\subset C_{b,u}(E)\times C_{b,u}(E)$,  for each $\epsilon 
>0$, 
$\inf_{P\in\Pi}E^P[\tau_{\epsilon}]>0$, and 
that Condition \ref{H} holds.  Then, for $h\in C_{b,u}(E)$,
$u_{+}$ given by $(\ref{uplus})$ is a sequential viscosity 
subsolution of (\ref{reso}) and $u_{-}$ given by $(\ref{uminus})$
is a sequential viscosity 
supersolution of the same equation.  
\end{lemma}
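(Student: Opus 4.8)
The plan is to mimic the proof of Lemma \ref{subsol}, replacing the single maximizer $x_0$ by a maximizing sequence $y_n$ and carrying the error terms along. By the identity $u_{-}(x,h)=-u_{+}(x,-h)$ it suffices to treat $u_{+}$; and since $y_n\mapsto \lambda u_{+}(y_n)-g(y_n)-h(y_n)$ is what we must control, we may again subtract a constant from $f$ so that $\sup_x(u_{+}-f)(x)=0$, whence $(u_{+}-f)(y_n)\to 0$. Fix $\epsilon>0$. For each $n$, choose (using Condition \ref{H}(c)) a measure $P_n\in\Pi_{y_n}$ achieving $u_{+}(y_n)=E^{P_n}[\int_0^\infty e^{-\lambda t}h(X(t))dt]$, and run exactly the chain of (in)equalities in Lemma \ref{subsol}, with $\tau_\epsilon$ now defined relative to the random starting point $X(0)=y_n$ as in the statement. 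Using Lemma \ref{rnlem}, Lemma \ref{resid} and Lemma \ref{intrep} together with $u_{+}-f\le 0$, this yields
\[
0=(u_{+}-f)(y_n)\le E^{P_n}\Big[\int_0^{\tau_\epsilon}e^{-\lambda t}\big(h(X(t))-\lambda f(X(t))+g(X(t))\big)\,dt\Big].
\]

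Now I would estimate the right-hand side from above by $h(y_n)-\lambda f(y_n)+g(y_n)$ plus an error. Write the integrand as $\big(h(y_n)-\lambda f(y_n)+g(y_n)\big)+\big(h(X(t))-h(y_n)\big)-\lambda\big(f(X(t))-f(y_n)\big)+\big(g(X(t))-g(y_n)\big)$. On $[0,\tau_\epsilon)$ we have $r(X(t),y_n)<\epsilon$, so by uniform continuity of $h,f,g$ (this is where the hypothesis $A\subset C_{b,u}(E)\times C_{b,u}(E)$ and $h\in C_{b,u}(E)$ is used, and why the plain Lemma \ref{subsol} argument does not suffice pointwise), each of the increments is bounded by a modulus $\omega(\epsilon)$ that is independent of $n$. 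Dividing the displayed inequality by $E^{P_n}[\int_0^{\tau_\epsilon}e^{-\lambda t}dt]$, which is bounded below by a strictly positive constant uniform in $n$ (here one uses $\inf_{P\in\Pi}E^P[\tau_\epsilon]>0$ together with $e^{-\lambda t}\ge e^{-\lambda\epsilon}$ on $[0,\tau_\epsilon)$), I get
\[
0\le \big(h(y_n)-\lambda f(y_n)+g(y_n)\big)+C_\epsilon\,\omega(\epsilon),
\]
with $C_\epsilon$ independent of $n$. Since $(u_{+}-f)(y_n)\to 0$, i.e. $\lambda u_{+}(y_n)-\lambda f(y_n)\to 0$, we may replace $\lambda f(y_n)$ by $\lambda u_{+}(y_n)$ up to a vanishing term, so $\limsup_{n\to\infty}\big(\lambda u_{+}(y_n)-g(y_n)-h(y_n)\big)\ge -C_\epsilon\,\omega(\epsilon)$ — but for the subsolution inequality \eqref{subeq3} I want an upper bound, so I should instead arrange the inequality in the direction $\lambda u_{+}(y_n)-g(y_n)-h(y_n)\le C_\epsilon\,\omega(\epsilon)+o(1)$; rereading, the inequality above already has $h(y_n)-\lambda f(y_n)+g(y_n)\ge -C_\epsilon\omega(\epsilon)$, equivalently $\lambda f(y_n)-g(y_n)-h(y_n)\le C_\epsilon\omega(\epsilon)$, and then $\lambda u_{+}(y_n)-g(y_n)-h(y_n)=\big(\lambda f(y_n)-g(y_n)-h(y_n)\big)+\lambda(u_{+}(y_n)-f(y_n))\le C_\epsilon\omega(\epsilon)+\lambda(u_{+}(y_n)-f(y_n))$. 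Taking $\limsup_{n\to\infty}$ and using $(u_{+}-f)(y_n)\to 0$ gives $\limsup_{n\to\infty}\big(\lambda u_{+}(y_n)-g(y_n)-h(y_n)\big)\le C_\epsilon\,\omega(\epsilon)$. Finally let $\epsilon\to 0$; one checks $C_\epsilon\omega(\epsilon)\to 0$ (the factor $C_\epsilon\sim 1/\inf_P E^P[\tau_\epsilon]$ may blow up, but $\omega(\epsilon)\to 0$, so a little care is needed — typically $E^P[\tau_\epsilon]$ is of order $\epsilon^2$ or $\epsilon$ and $\omega$ is chosen accordingly; more robustly, one argues that for each $\epsilon$ the bound $C_\epsilon\omega(\epsilon)$ is a fixed number and then notes the left side does not depend on $\epsilon$, concluding $\limsup_n(\lambda u_{+}(y_n)-g(y_n)-h(y_n))\le \inf_{\epsilon>0}C_\epsilon\omega(\epsilon)$; if this infimum is not obviously $0$ the correct route is to keep $\tau_\epsilon$ but replace the crude bound by Chebyshev-type control of $P\{\tau_\epsilon<\epsilon\}$, exactly as is implicit in \cite{FK06}). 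The main obstacle is precisely this last point: unlike in Lemma \ref{subsol}, where one divides by $E^P[\tau_\epsilon]$ for a single fixed $P$ and lets $\epsilon\to 0$ along that one measure, here the measures $P_n$ vary with $n$, so the normalization and the error estimates must be made uniform in $n$, which is exactly what the two extra hypotheses ($C_{b,u}$-regularity and $\inf_{P\in\Pi}E^P[\tau_\epsilon]>0$) are there to provide; getting the two limits $n\to\infty$ and $\epsilon\to 0$ to cooperate is the delicate step.

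The supersolution statement for $u_{-}$ follows by applying the subsolution result to $-u_{+}(\cdot,-h)$, using $u_{-}(x,h)=-u_{+}(x,-h)$ and noting that $-h\in C_{b,u}(E)$ and that a minimizing sequence for $u_{-}-f$ is a maximizing sequence for $u_{+}(\cdot,-h)-(-f)$, so \eqref{supeq3} is just \eqref{subeq3} with signs reversed.
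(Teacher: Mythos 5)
Your strategy is the paper's: rerun the chain of inequalities from Lemma \ref{subsol} along the maximizing sequence, and use uniform continuity together with $\inf_{P\in\Pi}E^P[\tau_\epsilon]>0$ to make the estimates uniform in $n$. But two bookkeeping slips leave the write-up with a hole that you yourself flag and never close. First, the display ``$0=(u_{+}-f)(y_n)\le E^{P_n}[\cdots]$'' is wrong: after normalizing $\sup_x(u_{+}-f)=0$ you only know $(u_{+}-f)(y_n)\le 0$ and $(u_{+}-f)(y_n)\to 0$, so the correct starting point is
\[
(u_{+}-f)(y_n)\;\le\;E^{P_n}\Big[\int_0^{\tau_\epsilon}e^{-\lambda t}\big(h(X(t))-\lambda f(X(t))+g(X(t))\big)\,dt\Big]
\]
with a genuinely negative left-hand side. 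Second --- and this is the source of your unresolved ``$C_\epsilon\,\omega(\epsilon)$'' worry --- the normalizing constant $1/\inf_{P}E^P[\tau_\epsilon]$ never multiplies the uniform-continuity error. Dividing by $E^{P_n}[\int_0^{\tau_\epsilon}e^{-\lambda t}dt]$ turns the right-hand side into a weighted average of $(h-\lambda f+g)(X(t))$ over $\{t<\tau_\epsilon\}$, where $r(X(t),y_n)<\epsilon$, so it equals $(h-\lambda f+g)(y_n)$ up to $\omega(\epsilon)$, with $\omega$ the (single, $n$-independent) modulus of $h-\lambda f+g$; there is no $C_\epsilon$ here. The constant $c_\epsilon:=e^{-\lambda\epsilon}\inf_{P\in\Pi}E^P[\tau_\epsilon]>0$ is needed only on the \emph{left}, to see that $(u_{+}-f)(y_n)\big/E^{P_n}[\int_0^{\tau_\epsilon}e^{-\lambda t}dt]\ge (u_{+}-f)(y_n)/c_\epsilon\to 0$ as $n\to\infty$ for each fixed $\epsilon$.

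With that correction your order of limits goes through with no delicacy: for each fixed $\epsilon$, letting $n\to\infty$ gives $0\le\liminf_n\big(h(y_n)-\lambda f(y_n)+g(y_n)\big)+\omega(\epsilon)$, then $\epsilon\to 0$ gives $\liminf_n(h-\lambda f+g)(y_n)\ge 0$, and $(u_{+}-f)(y_n)\to 0$ lets you trade $\lambda f(y_n)$ for $\lambda u_{+}(y_n)$; no Chebyshev-type control of $P\{\tau_\epsilon<\epsilon\}$ is needed. The paper resolves the same interleaving by the other route: it keeps the quotient $(u_{+}-f)(y_n)/E^{P_n}[\tau_{\epsilon_n}]$ and chooses a diagonal sequence $\epsilon_n\to 0$ slowly enough that this quotient vanishes, after which uniform continuity makes the right-hand side asymptotic to $h(y_n)-\lambda f(y_n)+g(y_n)$. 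Either interleaving is fine once the error term is attributed correctly. Your reduction of the supersolution statement to the subsolution one via $u_{-}(x,h)=-u_{+}(x,-h)$ is correct.
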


\begin{proof}
 Let $(f,g)\in A$.  Suppose $\{y_n\}$ is a sequence such that 
$(\ref{subtest3})$ holds for $u_{+}$. 
Since we can always 
add a constant to $f$, we can assume $\sup_x(u_{+}-f)(x)=0$.  
Let $H_{\epsilon}=e^{-\lambda\tau_{\epsilon}}$. Then, by the same arguments as in 
Lemma \ref{subsol}, we have, for some 
$P_n\in\Pi_{y_n}$(independent of $\epsilon )$, 
\begin{eqnarray*}
(u_{+}-f)(y_n)\leq E^{P_n}\left[\int_0^{\tau_{\epsilon}}e^{-\lambda 
t}\left(h(X(t))-\lambda f(X(t))+g(X(t))\right)dt\right],\end{eqnarray*}
where we have used the fact that 
$\sup_x(u_{+}-f)(x)=0$. Therefore 
\[\frac {(u_{+}-f)(y_n)}{E^{P_n}[\tau_{\epsilon}]}\leq\frac {E^{P_
n}\left[\int_0^{\tau_{\epsilon}}e^{-\lambda t}\left(h(X(t))-\lambda 
f(X(t))+g(X(t))\right)dt\right]}{E^{P_n}[\tau_{\epsilon}]}.\]
Replacing $\epsilon$ by $\epsilon_n$ going to zero sufficiently slowly so 
that the left side converges to zero, the uniform 
continuity of $f$, $g$, and $h$ implies the right side is 
asymptotic to $h(y_n)-\lambda f(y_n)+g(y_n)$ giving
\begin{eqnarray*}
0&\leq&\liminf_{n\rightarrow\infty}(h(y_n)-\lambda f(y_n)+g(y_n))\\
&=&\liminf(h(y_n)-\lambda u_{+}(y_n)+g(y_n).\end{eqnarray*}
\end{proof}

The following theorem is essentially Lemma 7.4 of 
\cite{FK06}.  It gives the intuitively natural result that 
if $h\in\overline {{\cal R}(\lambda -A)}$ (where the closure is taken under uniform 
convergence), 
then the comparison principle 
holds for sequential viscosity semisolutions of $\lambda u-Au=h$. 

If $E$ is compact, the same results hold for viscosity 
semisolutions, by Remark \ref{3vs1}. 

\begin{theorem}\label{converse}
Suppose $h\in C_b(E)$ and there 
exist $(f_n,g_n)\in A$ satisfying $\sup_x|\lambda f_n(x)-g_n(x)-h
|\rightarrow 0$.
Then the comparison principle holds for sequential 
viscosity semisolutions of $(\ref{reso})$. 
\end{theorem}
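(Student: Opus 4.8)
The plan is to exploit the near-solutions $(f_n,g_n)$ directly as test functions, so that no ``doubling of variables'' is needed. Write $h_n:=\lambda f_n-g_n$ and $\delta_n:=\|h_n-h\|\to 0$, so that each $(f_n,g_n)\in A$ is an \emph{exact} strong solution of the perturbed resolvent equation $\lambda f_n-g_n=h_n$. The heuristic is that such exact solutions must lie between every subsolution and every supersolution of the limiting equation $(\ref{reso})$, up to an error controlled by $\delta_n$; letting $n\to\infty$ will give the comparison principle.

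First I would fix a sequential viscosity subsolution $u$ and a sequential viscosity supersolution $v$ of $(\ref{reso})$, and prove $u\le v$ pointwise. Since $u\in B(E)$ and $f_n\in C_b(E)$, the function $u-f_n$ is bounded, so there is a sequence $y_k\in E$ (depending on $n$) with $(u-f_n)(y_k)\to\sup_x(u-f_n)(x)$. Applying Definition \ref{visc3}(a) with $(f,g)=(f_n,g_n)$ along this sequence and using the algebraic identity
\[
\lambda u(y)-g_n(y)-h(y)=\lambda(u-f_n)(y)+(h_n-h)(y),
\]
I would pass to the $\limsup$ in $k$: the first term converges to $\lambda\sup_x(u-f_n)(x)$ and the second stays in $[-\delta_n,\delta_n]$, so $\lambda\sup_x(u-f_n)(x)-\delta_n\le 0$, i.e. $\sup_x(u-f_n)(x)\le\delta_n/\lambda$. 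Symmetrically, picking $z_k\in E$ with $(v-f_n)(z_k)\to\inf_x(v-f_n)(x)$ and invoking Definition \ref{visc3}(b) with the $\liminf$ yields $\inf_x(v-f_n)(x)\ge-\delta_n/\lambda$.

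Finally, for every $x\in E$,
\[
(u-v)(x)=(u-f_n)(x)-(v-f_n)(x)\le\sup_y(u-f_n)(y)-\inf_y(v-f_n)(y)\le\frac{2\delta_n}{\lambda},
\]
and letting $n\to\infty$ gives $u(x)\le v(x)$, which is exactly the comparison principle of Definition \ref{compar}.

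I do not expect a serious obstacle here: the only points needing care are the existence of the maximizing and minimizing sequences (immediate from boundedness of $u$, $v$ and $f_n$), and the bookkeeping with $\limsup$ and $\liminf$ of sums, where one must keep track of which summand converges and which is merely bounded by $\delta_n$. The substantive observation — and the reason the usual hard part of a comparison proof disappears — is that the single family $\{f_n\}$ can be tested simultaneously against both $u$ and $v$, so uniform convergence $\lambda f_n-g_n\to h$ alone forces $u\le v$.
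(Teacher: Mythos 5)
Your proof is correct and follows essentially the same route as the paper's: both arguments use the near-solutions $(f_n,g_n)$ themselves as test pairs, deduce $\sup_x(u-f_n)(x)\le \delta_n/\lambda$ from the subsolution property and $\inf_x(v-f_n)(x)\ge -\delta_n/\lambda$ from the supersolution property, and subtract. The only cosmetic difference is that the paper extracts a single approximate maximizer $y_n$ satisfying both near-optimality conditions, whereas you keep the whole maximizing sequence and pass to the $\limsup$ via the algebraic identity; the content is identical.
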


\begin{proof}
Suppose $\underline u$ is a sequential viscosity subsolution.
Set $h_n=\lambda f_n-g_n$.  
For $\epsilon_n>0$, $\epsilon_n\rightarrow 0$, there exist $y_n\in 
E$ 
satisfying 
$\underline u(y_n)-f_n(y_n)\geq\sup_x(\underline u(x)-f_n(x))-\epsilon_
n/\lambda$ and 
$\lambda\underline u(y_n)-g_n(y_n)-h(y_n)\leq\epsilon_n$.  
Then
\begin{eqnarray*}
\sup_x(\lambda\underline u(x)-\lambda f_n(x))&\leq&\lambda\underline 
u(y_n)-\lambda f_n(y_n)+\epsilon_n\\
&\leq&h(y_n)+g_n(y_n)-\lambda f_n(y_n)+2\epsilon_n\\
&=&h(y_n)-h_n(y_n)+2\epsilon_n\\
&\rightarrow&0.\end{eqnarray*}
Similarly, if $\overline u$ is a supersolution of $\lambda u-Au=h$, 
\[\liminf_{n\rightarrow\infty}\inf_x(\overline u(x)-f_n(x))\geq 0
,\]
and it follows that $\underline u\leq\overline u$.
\end{proof}

\section{Martingale problems with boundary 
conditions} \label{cmgpunq}

The study of stochastic processes that are constrained 
to some set $\bar {E}_0$ and must satisfy some boundary 
condition on $\partial E_0$, described by one or more boundary 
operators $B_1,\ldots ,B_m$, is typically carried out by 
incorporating the boundary condition in the definition of 
the domain ${\cal D}(A)$ (see Remark \ref{0bdry} below).  
However, this approach restricts the problems that can 
be dealt with to fairly regular ones, so we follow the 
formulation of a constrained martingale problem given in 
\cite{Kur90}.  (See also \cite{Kur91,KS01}). 

Let $E_0\subseteq E$ be an open set and let 
\[\partial E_0=\cup_{k=1}^mE_k,\]
for disjoint, nonempty Borel sets $E_1,\ldots ,E_m$.  Let 
$A\subseteq C_b(\bar {E}_0)\times C_b(\bar {E}_0)$, $B_k\subseteq 
C_b(\bar {E}_0)\times C_b(\bar {E}_0)$, $k=1,...,m$, be 
linear operators with a common domain ${\cal D}$ such that 
$(1,0)\in A$, $(1,0)\in B_k$, $k=1,...,m$.  For simplicity we will 
assume that $ $$E$ is compact (hence the subscript $b$ will be 
dropped) and that $A,B_1,\ldots ,B_m$ are single-valued.

\begin{definition}\label{cmgp} 
A stochastic process $X$ with sample paths in $D_{\bar {E}_0}[0,\infty 
)$ is 
a solution of the {\em constrained martingale problem\/} for 
$(A,E_0;B_1,E_1;\ldots ;B_m,E_m)$ provided there exist a filtration 
$\{{\cal F}_t\}$ and continuous, nondecreasing processes $\gamma_
1,\ldots ,\gamma_m$ 
such that $X$, $\gamma_1,\ldots ,\gamma_m$ are $\{{\cal F}_t\}$-adapted,
\[\gamma_k(t)=\int_0^t{\bf 1}_{\bar {E}_k}(X(s-))d\gamma_k(s),\]
and for each $f\in {\cal D}$,
\begin{equation}M_f(t)=f(X(t))-f(X(0))-\int_0^tAf(X(s))ds-\sum_{k
=1}^m\int_0^tB_kf(X(s-))d\gamma_k(s)\label{cmg}\end{equation}
is a $\{{\cal F}_t\}$-martingale. 
\end{definition}

\begin{remark}\label{adap}
$\gamma_1,\ldots ,\gamma_m$ will be called {\em local times\/} since $
\gamma_k$ increases
only when $X$ is in $\bar {E}_k$.  Without loss of 
generality, we can assume that the $\gamma_k$ are $\{{\cal F}^X_t
\}$-adapted. 
(Replace $\gamma_k$ by its dual, predictable projection on $\{{\cal F}_
t^X\}$.)
Definition \ref{cmgp}\ does not require
that the $\gamma_k$ be uniquely determined 
by the distribution of $X$, but if $\gamma_k^1$ and $\gamma_k^2$, $
k=1,\ldots ,m$, are 
continuous and satisfy 
the martingale requirement with the same filtration, 
we must have
\[\sum_{k=1}^m\int_0^tB_kf(X(s-))d\gamma_k^1(s)-\sum_{k=1}^m\int_
0^tB_kf(X(s-))d\gamma_k^2(s)=0,\]
since this expression will be a continuous martingale with 
finite variation paths.
\end{remark}\medskip

\begin{remark}
The main example of a constrained martingale 
problem in the sense of the above definition is the 
constrained martingale problem that describes a reflected 
diffusion process.  In this case, $A$ is a second order elliptic 
operator and the $B_k$ are first order 
differential operators. Although there is a vast 
literature on this topic, there are still relevant cases of 
reflected diffusions that have not been uniquely characterized 
as solutions of martingale problems or stochastic 
differential equations. In Section \ref{degref}, the results 
of this section are used in one of these cases. More 
general constrained diffusions where the $B_k$ are second 
order elliptic operators, for instance diffusions with sticky 
reflection, also satisfy Definition \ref{cmgp}.

Definition \ref{cmgp}\ is a special case of a more general 
definition of constrained martingale problem given in 
\cite{KS01}.  This broader definition allows for more 
general boundary behavior, such as models considered in 
\cite{CK06}.  
\end{remark}\medskip

Many results for solutions of martingale problems carry 
over to solutions of constrained martingale problems. In 
particular Lemma \ref{resid} still holds. 
In addition the following lemma holds. 

\begin{lemma}\label{cres} 
Let $X$ be a stochastic process with sample paths in 
$D_{\bar {E}_0}[0,\infty )$, $\gamma_1,\ldots ,\gamma_m$ be continuous, nondecreasing processes  
such that $X$, $\gamma_1,\ldots ,\gamma_m$ are $\{{\cal F}_t\}$-adapted. Then for 
$f\in {\cal D}$ such that $(\ref{cmg})$ is a $\{{\cal F}_t\}$-martingale and $
\lambda >0$, 
\begin{eqnarray*}
M_f^{\lambda}(t)&=&e^{-\lambda t}f(X(t))-f(X(0))+\int_0^te^{-\lambda 
s}(\lambda f(X(s))-Af(X(s)))ds\\
&&\qquad\qquad\qquad -\sum_{k=1}^m\int_0^te^{-\lambda s}B_kf(X(s-
))d\gamma_k(s)\end{eqnarray*}
is a $\{{\cal F}_t\}$-martingale. 
\end{lemma}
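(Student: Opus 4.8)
The plan is to apply the same integration-by-parts argument used for Lemma \ref{resid}, now in the presence of the local-time terms. Since $f\in{\cal D}$, both $f$ and $Af$ lie in $C(\bar E_0)$, and since $X$ has sample paths in $D_{\bar E_0}[0,\infty)$, the process $M_f$ in $(\ref{cmg})$ is cadlag; moreover $f(X(t))$ is a cadlag semimartingale with $f(X(t))=f(X(0))+\int_0^tAf(X(s))\,ds+\sum_k\int_0^tB_kf(X(s-))\,d\gamma_k(s)+M_f(t)$. The deterministic multiplier $e^{-\lambda t}$ is $C^1$ and of finite variation. First I would apply the product rule (integration by parts for semimartingales) to $e^{-\lambda t}f(X(t))$. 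Because $e^{-\lambda t}$ is continuous and of finite variation, there is no quadratic covariation term, so
\[
e^{-\lambda t}f(X(t)) = f(X(0)) + \int_0^t e^{-\lambda s}\,d\big(f(X(s))\big) - \lambda\int_0^t e^{-\lambda s}f(X(s))\,ds .
\]

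Next I would substitute the semimartingale decomposition of $f(X(s))$ into the stochastic integral $\int_0^t e^{-\lambda s}\,d(f(X(s)))$, splitting it as $\int_0^t e^{-\lambda s}Af(X(s))\,ds + \sum_{k=1}^m\int_0^t e^{-\lambda s}B_kf(X(s-))\,d\gamma_k(s) + \int_0^t e^{-\lambda s}\,dM_f(s)$. Rearranging the resulting identity gives exactly
\[
M_f^{\lambda}(t) = \int_0^t e^{-\lambda s}\,dM_f(s),
\]
so it remains only to observe that the right-hand side is a $\{{\cal F}_t\}$-martingale. This follows because $M_f$ is a cadlag $\{{\cal F}_t\}$-martingale and $e^{-\lambda s}$ is a bounded, deterministic (hence predictable) integrand; as in the proof of Lemma \ref{resid}, the cadlag property of all processes involved means we need not assume the filtration satisfies the usual conditions. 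One should also note $M_f$ has integrable supremum on bounded intervals since $f$, $Af$ are bounded and, for fixed $T$, $\sum_k\gamma_k(T)\in L^1$ (indeed $E[\sum_k\int_0^T B_kf(X(s-))d\gamma_k(s)]$ is finite because $(\ref{cmg})$ is a true martingale with $f,Af$ bounded), which legitimizes the stochastic-integral manipulations.

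The argument is essentially routine, so there is no serious obstacle; the only point requiring a little care is the justification of the product rule and the interchange of integration in the presence of the finite-variation local-time terms $d\gamma_k$. This is handled by standard semimartingale calculus once one records that the $\gamma_k$ are continuous and nondecreasing (so $X$ and $f(X)$ are genuine cadlag semimartingales with respect to $\{{\cal F}_t\}$) and that all integrands are bounded and predictable (or left-continuous). For the converse direction, if one wishes to state it, the same computation run backwards — applying the product rule to $f(X(t))=e^{\lambda t}\big(e^{-\lambda t}f(X(t))\big)$ — recovers $(\ref{cmg})$ from $M_f^\lambda$, exactly as in Lemma \ref{resid}.
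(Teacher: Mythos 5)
Your argument is correct and coincides with the paper's own proof: both apply the integration-by-parts/It\^o formula to $e^{-\lambda t}f(X(t))$, substitute the semimartingale decomposition of $f(X(t))$ from $(\ref{cmg})$, and identify $M_f^{\lambda}$ with the martingale $\int_0^t e^{-\lambda s}\,dM_f(s)$. The extra remarks on integrability and the absence of a quadratic covariation term are fine but not needed beyond what the paper records.
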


\begin{proof}
Since ${\cal D}\subset C(E)$, $M_f$ is cadlag, we can apply It\^o's formula 
to 
$e^{-\lambda t}f(X(t))$ and obtain 
\begin{eqnarray*}
e^{-\lambda t}f(X(t))-f(X(0))&=&\int_0^t(-f(X(s))\lambda e^{-\lambda 
s}+e^{-\lambda s}Af(X(s)))ds\\
&&\qquad\qquad +\sum_{k=1}^m\int_0^te^{-\lambda s}B_kf(X(s-))d\gamma_
k(s)+\int_0^te^{-\lambda s}dM_f(s).\end{eqnarray*}
\end{proof}

Lemma \ref{rnlem} is replaced by Lemma 
\ref{ctrans} below.

\begin{lemma}\label{ctrans}

\begin{itemize}

\item[a)]The set of distributions of 
solutions of the constrained martingale problem for 
$(A,E_0;B_1,E_1;\ldots ;B_m,E_m)$ is convex.

\item[b)]Let $X$, $\gamma_1,\ldots ,\gamma_m$ satisfy Definition 
\ref{cmgp}.  Let $\tau\geq 0$ be a bounded $\{{\cal F}_t\}$-stopping time and 
$H\geq 0$ be a ${\cal F}_{\tau}$-measurable random variable such that 
$0<E[H]<\infty$. Then the measure $P^{\tau ,H}\in {\cal P}(D_E[0,
\infty )$ defined by 
\begin{equation}P^{\tau ,H}(C)=\frac {E[H\mbox{\rm \{\bf 1}_
C(X(\tau +\cdot ))]}{E[H]},\quad C\in {\cal B}(D_{\overline {E_0}}
[0,\infty )),\label{cphdist}\end{equation}
is the distribution of 
a solution of the constrained martingale problem for 
$(A,E_0;B_1,E_1;$ $\ldots ;B_m,E_m)$.

\end{itemize}
\end{lemma}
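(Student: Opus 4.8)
The plan is to treat the two parts separately, mirroring the standard convexity argument for part (a) and the proof of Lemma \ref{rnlem} for part (b), while carrying the local times $\gamma_1,\dots,\gamma_m$ along through the constructions.

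For part (a), suppose $P_1,P_2$ are distributions of solutions of the constrained martingale problem, realised on probability spaces $(\Omega^i,\mathcal F^i,\mathbb P^i)$ with filtrations $\{\mathcal F^i_t\}$, processes $X^i$, and local times $\gamma^i_1,\dots,\gamma^i_m$ satisfying Definition \ref{cmgp}. I would form the disjoint‑union probability space $\Omega=\Omega^1\sqcup\Omega^2$, with $\mathcal F=\{C:C\cap\Omega^i\in\mathcal F^i\}$, $\mathcal F_t=\{C:C\cap\Omega^i\in\mathcal F^i_t\}$, and $\mathbb P(C)=\alpha\mathbb P^1(C\cap\Omega^1)+(1-\alpha)\mathbb P^2(C\cap\Omega^2)$, and let $X,\gamma_k$ restrict to $X^i,\gamma^i_k$ on $\Omega^i$. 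Then $X,\gamma_1,\dots,\gamma_m$ are $\{\mathcal F_t\}$‑adapted, the support identity $\gamma_k(t)=\int_0^t\mathbf 1_{\bar E_k}(X(s-))\,d\gamma_k(s)$ holds path by path, and for $f\in\mathcal D$ the process $M_f$ of (\ref{cmg}) is an $\{\mathcal F_t\}$‑martingale under $\mathbb P$, since for $C\in\mathcal F_s$ one has $E^{\mathbb P}[M_f(t)\mathbf 1_C]=\alpha E^{\mathbb P^1}[M_f(t)\mathbf 1_{C\cap\Omega^1}]+(1-\alpha)E^{\mathbb P^2}[M_f(t)\mathbf 1_{C\cap\Omega^2}]$ and each term is unchanged when $t$ is replaced by $s$ by the martingale property on $\Omega^i$; $\mathbb P$‑integrability of $M_f(t)$ follows likewise. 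Hence $(X,\gamma_1,\dots,\gamma_m)$ is a solution and its law is $\alpha P_1+(1-\alpha)P_2$.

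For part (b), on the space $(\Omega,\mathcal F,\mathbb P)$ carrying the solution $(X,\gamma_1,\dots,\gamma_m)$ with filtration $\{\mathcal F_t\}$, I would set $X^\tau(t)=X(\tau+t)$, $\gamma_k^\tau(t)=\gamma_k(\tau+t)-\gamma_k(\tau)$, take the shifted filtration $\mathcal G_t=\mathcal F_{\tau+t}$, and define $\mathbb P^H$ on $(\Omega,\mathcal F)$ by $\mathbb P^H(C)=E^{\mathbb P}[H\mathbf 1_C]/E^{\mathbb P}[H]$, so that $P^{\tau,H}$ is the law of $X^\tau$ on $D_{\bar E_0}[0,\infty)$ under $\mathbb P^H$. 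The processes $X^\tau,\gamma^\tau_1,\dots,\gamma^\tau_m$ are $\{\mathcal G_t\}$‑adapted, and $\gamma^\tau_k(t)=\int_0^t\mathbf 1_{\bar E_k}(X^\tau(s-))\,d\gamma^\tau_k(s)$ because the corresponding identity for $\gamma_k$ holds and the shift only discards the behaviour on $[0,\tau]$. Changing variables in the time integral ($\int_0^tAf(X^\tau(s))\,ds=\int_\tau^{\tau+t}Af(X(u))\,du$) and in each local‑time integral ($\int_0^tB_kf(X^\tau(s-))\,d\gamma^\tau_k(s)=\int_\tau^{\tau+t}B_kf(X(u-))\,d\gamma_k(u)$, using continuity of $\gamma_k$) gives $M_f^\tau(t)=M_f(\tau+t)-M_f(\tau)$. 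Since $\tau$ is bounded, say $\tau\le T$, the martingale $M_f$ on the bounded interval $[0,T+t]$ is closed by its value at $T+t$, so optional sampling at the bounded stopping times $\tau+s\le\tau+t$ shows $\{M_f(\tau+t)-M_f(\tau)\}_{t\ge0}$ is a $\{\mathcal G_t\}$‑martingale under $\mathbb P$; here boundedness of $\tau$ is essential (this is why the lemma assumes it), whereas in Lemma \ref{rnlem} the increments of $M_f$ are bounded and boundedness of $\tau$ is not needed.

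It then remains to transfer the $\{\mathcal G_t\}$‑martingale property from $\mathbb P$ to $\mathbb P^H$. Because $H$ is $\mathcal F_\tau$‑measurable, the Radon--Nikodym density $H/E^{\mathbb P}[H]$ is $\mathcal G_0$‑measurable, hence $\mathcal G_s$‑measurable for every $s$, so for $C\in\mathcal G_s$ one has $E^{\mathbb P^H}[(M_f^\tau(t)-M_f^\tau(s))\mathbf 1_C]=E^{\mathbb P}[H]^{-1}E^{\mathbb P}[H\mathbf 1_C(M_f(\tau+t)-M_f(\tau+s))]$. I would show this vanishes by approximating the nonnegative, $\mathcal G_s$‑measurable, $\mathbb P$‑integrable random variable $H\mathbf 1_C$ from below by bounded $\mathcal G_s$‑measurable random variables, applying $E^{\mathbb P}[M_f(\tau+t)-M_f(\tau+s)\mid\mathcal G_s]=0$ to each approximant (the tower property being legitimate since the approximant is bounded), and passing to the limit; running the same computation against product test functions $\prod_if_i(X^\tau(t_i))$, $f_i\in C(\bar E_0)$, also yields the $\mathbb P^H$‑integrability of $M_f^\tau$. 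I expect this last step to be the main obstacle: unlike in the unconstrained case, the local‑time term in $M_f$ need not be bounded on a bounded interval, so $H(M_f(\tau+t)-M_f(\tau+s))$ is not a priori integrable, and both the interchange of limit and expectation and the $\mathbb P^H$‑integrability of $M_f^\tau$ must be argued carefully, exploiting that $H$ is $\mathcal F_\tau$‑measurable together with optional stopping at the bounded times $\tau+t_i$. Once this is in place, $(X^\tau,\gamma^\tau_1,\dots,\gamma^\tau_m)$ satisfies Definition \ref{cmgp} under $\mathbb P^H$ with the filtration $\{\mathcal G_t\}$, and $P^{\tau,H}$ is the distribution of this solution.
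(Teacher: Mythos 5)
Your argument is essentially the paper's: part (a) is the standard disjoint-union convexity construction (which the paper dismisses as immediate), and part (b) is exactly the paper's shift-by-$\tau$, reweight-by-$H$, optional-sampling argument, with the local-time integrals carried through the change of variables just as you describe. The integrability caveat you single out as the main obstacle for unbounded $H$ is real but is equally unaddressed by the paper's one-line appeal to the optional sampling theorem, and it is harmless in every application the paper makes of the lemma, where $H=e^{-\lambda\tau_{\epsilon}}\leq 1$ is bounded and $M_f(\tau+t)$ is integrable by optional sampling at the bounded time $\tau+t$, so the tower-property computation you outline closes without any truncation or limit interchange.
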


\begin{proof}
Part (a) is immediate.  For Part (b), let $(\Omega ,{\cal F},\mathbb{
P})$ 
be the probability space on which $X$, $\gamma_1,\ldots ,\gamma_m$ are 
defined, and define $\mathbb{P}^H$ by
\[\mathbb{P}^H(C)=\frac {E^{\mathbb{P}}[H{\bf 1}_C]}{E^{\mathbb{P}}
[H]},\quad C\in {\cal F}.\]
Define  $X^{\tau}$ and $\gamma_k^{\tau}$ by $X^{\tau}(t)=X(\tau +
t)$ and 
$\gamma_k^{\tau}(t)=\gamma_k(\tau +t)-\gamma_k(\tau )$. 
$X^{\tau}$ and the $\gamma_k^{\tau}$ are adapted to the filtration $
\{{\cal F}_{\tau +t}\}$ and 
for  $0\leq t_1<\cdots <t_n<t_{n+1}$ and  $f_1,\cdots ,f_n\in B(E
)$,
\begin{eqnarray*}
&&E^{\mathbb{P}^H}\Big[\Big\{f(X^{\tau}(t_{n+1}))-f(X^{\tau}(t_n)
)-\int_{t_n}^{t_{n+1}}Af(X^{\tau}(s))ds-\sum_{k=1}^m\int_{t_n}^{t_{
n+1}}B_kf(X^{\tau}(s-))d\gamma^{\tau}_k(s)\Big\}\\
&&\qquad\qquad\qquad\qquad\qquad\qquad\qquad\qquad\Pi_{i=1}^nf_i(
X^{\tau}(t_i))\Big]\\
&&\quad =\frac 1{E^{\mathbb{P}}[H]}E^{\mathbb{P}}\Big[H\Big\{f(X(
\tau +t_{n+1}))-f(X(\tau +t_n))\\
&&\qquad\qquad\qquad\qquad -\int_{\tau +t_n}^{\tau +t_{n+1}}Af(X(
s))ds-\sum_{k=1}^m\int_{\tau +t_n}^{\tau +t_{n+1}}B_kf(X(s-))d\gamma_
k(s)\Big\}\\
&&\qquad\qquad\qquad\qquad\qquad\qquad\qquad\qquad\Pi_{i=1}^nf_i(
X(\tau +t_i))\Big]\\
&&\quad =0\end{eqnarray*}
by the optional sampling theorem.
Therefore, under $\mathbb{P}^H$, $X^{\tau}$ is a solution of the constrained 
martingale problem with local times $\gamma^{\tau}_1,\cdots ,\gamma^{
\tau}_m$. 
$P^{\tau ,H}$, given by (\ref{cphdist}),
is the distribution of $X^{\tau}$ on $D_{\overline {E_0}}[0,\infty 
)$.
\end{proof}

As in Section \ref{mgpunq}, let $\Pi$ denote the set of 
distributions of solutions of the constrained martingale 
problem and  
$\Pi_{\mu}$ denote the set of 
distributions of solutions with initial condition $\mu$. 
In the rest of this 
section $X$ is the canonical process on $D_E[0,\infty )$ 
and $\gamma_1,\ldots ,\gamma_m$ are a set of $\{{\cal F}^X_t\}$-adapted local times (see 
Remark \ref{adap}). 
We assume that the following conditions hold. 
See Section \ref{suffsect} below for 
settings in which these conditions are valid. Recall that 
we are assuming $E$ is compact.

\begin{condition}\label{Hc}\hfill

\begin{itemize}

\item[a)] ${\cal D}$ is dense in $C(\bar {E}_0)$ in the topology of 
uniform convergence.

\item[b)] For each $\mu\in {\cal P}(\bar {E}_0)$, $\Pi_{\mu}\neq\emptyset$ (see Proposition 
\ref{suffHc}). 

\item[c)] $\Pi$ is compact 
(see Proposition \ref{suffHc}). 

\item[d)] For each $P\in\Pi$ and $\lambda >0$, there exist $\gamma_
1,\ldots ,\gamma_m$ 
satisfying the requirements of Definition \ref{cmgp} 
such that 
 $E^P\Big[\int_0^{\infty}e^{-\lambda t}d\gamma_k(t)\Big]<\infty$, $
k=1,\cdots ,m$ (see Proposition \ref{suffHc}).
\end{itemize}
\end{condition}

\begin{remark}
For $P\in\Pi$, Condition \ref{Hc}(d) and Lemma \ref{cres} give 
\begin{equation}\mu f=E[\int_0^{\infty}e^{-\lambda s}(\lambda f(X
(s))-Af(X(s)))ds-\sum_{k=1}^m\int_0^{\infty}e^{-\lambda s}B_kf(X(
s-))d\gamma_k(s)].\label{cresid}\end{equation}
\end{remark}\medskip

\begin{remark}
We can take the topology on $D_E[0,\infty )$ to be either the 
Skorohod topology or the Jakubowski topology (see 
Remark \ref{Jak}).
\end{remark}\medskip

The definitions of $u_{+}$, $u_{-}$, $\pi_{+}$ and $\pi_{-}$ 
are still given by 
(\ref{uplus}), (\ref{uminus}), (\ref{piplus}) and 
(\ref{piminus}). With Condition \ref{Hc} replacing 
Condition \ref{H}, the proof of Lemma \ref{intrep} carries 
over (Lemma \ref{ctrans} above guarantees that 
Lemmas 4.5.8 and 4.5.10 in \cite{EK86} can be applied).

\begin{lemma}\label{csubsol}Assume 
Condition \ref{Hc} holds.  Then $u_{+}$ is a viscosity 
subsolution of $(\ref{creso})$ and $u_{-}$ is a viscosity 
supersolution of the same equation.  
\end{lemma}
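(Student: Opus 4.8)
The plan is to mimic the proof of Lemma~\ref{subsol}, now carrying along the local-time terms and checking, at a boundary point $x_0\in\partial E_0$, that the \emph{relaxed} boundary inequality \eqref{bndry} (resp.\ its supersolution counterpart) holds. As in Lemma~\ref{subsol}, since $u_-(x,h)=-u_+(x,-h)$ and the boundary operators are linear (so $-B_k$ behaves correctly under sign change), it suffices to treat $u_+$. Upper semicontinuity of $u_+$ and the representation $\pi_+(\Pi_\mu,h)=\int u_+\,d\mu$ come from the version of Lemma~\ref{intrep} that holds under Condition~\ref{Hc}, which is already in place. So fix $f\in\mathcal D$, and a point $x_0\in\bar E_0$ with $\sup_x(u_+-f)(x)=(u_+-f)(x_0)$; normalize $f$ so that $(u_+-f)(x_0)=0$, and pick, using compactness (Condition~\ref{Hc}(c)), some $P\in\Pi_{x_0}$ attaining $u_+(x_0)=E^P[\int_0^\infty e^{-\lambda t}h(X(t))\,dt]$, with local times $\gamma_1,\dots,\gamma_m$ as in Condition~\ref{Hc}(d).

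First I would handle the interior case $x_0\in E_0$. Define $\tau_\epsilon$ exactly as in \eqref{tau} and set $H_\epsilon=e^{-\lambda\tau_\epsilon}$. Because $E_0$ is open, for $\epsilon$ small the stopped path stays in $E_0$, so $X(s-)\notin\bar E_k$ for $s\le\tau_\epsilon$ and every $\gamma_k$ is constant on $[0,\tau_\epsilon]$; hence the local-time terms drop out on that interval and the computation of Lemma~\ref{subsol} goes through verbatim, using Lemma~\ref{cres} in place of Lemma~\ref{resid}, Lemma~\ref{ctrans}(b) in place of Lemma~\ref{rnlem}, and the $\mathcal D$-version of Lemma~\ref{intrep}. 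Dividing by $E^P[\tau_\epsilon]$ and letting $\epsilon\to0$ gives $\lambda u_+(x_0)-Af(x_0)\le h(x_0)$, which is \eqref{interior}.

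The substantive case is $x_0\in\partial E_0$, where I only need to prove the \emph{disjunction} \eqref{bndry}: if $-B_kf(x_0)\le0$ for some $k$ with $x_0\in\bar E_k$ we are already done, so assume $-B_kf(x_0)>0$ for every such $k$ (and note $B_kf$ is continuous). With $\tau_\epsilon,H_\epsilon$ as above, the same chain of (in)equalities from Lemma~\ref{subsol} now produces the extra term $-\sum_k E^P[\int_0^{\tau_\epsilon}e^{-\lambda s}B_kf(X(s-))\,d\gamma_k(s)]$; since $\gamma_k$ increases only when $X(s-)\in\bar E_k$, for $s\le\tau_\epsilon$ we have $X(s-)$ within $\epsilon$ of $x_0$, so by continuity $B_kf(X(s-))\ge B_kf(x_0)-o(1)>0$ for $\epsilon$ small, making this term $\le0$. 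Thus we still get
\[
0 \;\le\; E^P\!\left[\int_0^{\tau_\epsilon}e^{-\lambda s}\bigl(h(X(s))-\lambda f(X(s))+Af(X(s))\bigr)ds\right].
\]
Dividing by $E^P[\tau_\epsilon]$ and letting $\epsilon\to0$ yields $0\le h(x_0)-\lambda f(x_0)+Af(x_0)=h(x_0)-\lambda u_+(x_0)+Af(x_0)$, i.e.\ the interior inequality holds at $x_0$, so the first term in the minimum in \eqref{bndry} is $\le0$ and \eqref{bndry} holds. The supersolution statement for $u_-$ follows by the $-h$ symmetry.

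The one technical point requiring care — and the place I'd expect the argument to need the most attention — is the step $\tfrac1{E^P[\tau_\epsilon]}E^P[\int_0^{\tau_\epsilon}e^{-\lambda s}(\cdots)ds]\to h(x_0)-\lambda f(x_0)+Af(x_0)$ as $\epsilon\to0$: one needs $E^P[\tau_\epsilon]>0$ for small $\epsilon$ (true since $\tau_\epsilon>0$ a.s.\ and $\le\epsilon$, and $P$ is fixed) together with the continuity of $h,f,Af$ on the compact $E$ and right-continuity of $X$ to get the $L^1$-type convergence of the integrand's average; this is exactly the limit already used in Lemma~\ref{subsol}, so it transfers without change. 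The only genuinely new ingredient over Lemma~\ref{subsol} is the sign bookkeeping for the $B_kf$ terms near the boundary, which is why the \emph{relaxed} form of the boundary condition in \eqref{bndry} is precisely what the proof delivers.
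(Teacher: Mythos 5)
Your overall strategy is the paper's: the identity $0=(u_+-f)(x_0)$, the splitting at $\tau_\epsilon$ via Lemma \ref{ctrans}(b) and Lemma \ref{intrep}, and the passage to the limit after normalizing, with Lemma \ref{cres} supplying the local-time terms. The one place where your write-up goes wrong is precisely the step you flag as ``the only genuinely new ingredient,'' the sign bookkeeping at the boundary. Starting from Lemma \ref{cres} one gets
\[
0\;\le\;E^P\Big[\int_0^{\tau_\epsilon}e^{-\lambda s}\big(h-\lambda f+Af\big)(X(s))\,ds\Big]\;+\;E^P\Big[\sum_{k}\int_0^{\tau_\epsilon}e^{-\lambda s}B_kf(X(s-))\,d\gamma_k(s)\Big],
\]
i.e.\ the extra term carries $+B_kf$, not $-B_kf$ as you wrote. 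Moreover, your case assumption $-B_kf(x_0)>0$ means $B_kf(x_0)<0$, which directly contradicts your continuity estimate ``$B_kf(X(s-))\ge B_kf(x_0)-o(1)>0$.'' The two slips happen to point at the correct conclusion, but as written the justification is self-contradictory. The repaired version is: under the assumption $-B_kf(x_0)>0$ for every $k$ with $x_0\in\bar E_k$ (the terms with $x_0\notin\bar E_k$ dropping out for small $\epsilon$ since $\bar E_k$ is closed), continuity gives $B_kf(X(s-))\le B_kf(x_0)+o(1)<0$ on the support of $d\gamma_k$ restricted to $[0,\tau_\epsilon]$, so the $+\sum_k\int B_kf\,d\gamma_k$ term is $\le 0$ and the interior inequality follows as you describe. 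With that correction the proof is complete.

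For comparison, the paper avoids the case split entirely: it divides the single inequality above by
$E^P\big[\lambda^{-1}(1-e^{-\lambda\tau_\epsilon})+\sum_k\int_0^{\tau_\epsilon}e^{-\lambda t}\,d\gamma_k(t)\big]$,
so that the right-hand side becomes a convex combination of quantities converging to $h(x_0)-\lambda f(x_0)+Af(x_0)$ and to $B_kf(x_0)$, and the nonnegativity of the left-hand side yields
$0\le (h(x_0)-\lambda f(x_0)+Af(x_0))\vee\max_{k:x_0\in\bar E_k}B_kf(x_0)$
in one stroke, covering the interior case ($x_0\in E_0$, where the local-time part of the normalizer vanishes) and the boundary case simultaneously. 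Your case analysis is an acceptable, slightly more verbose alternative once the signs are fixed; your treatment of the supersolution via $u_-(x,h)=-u_+(x,-h)$ and of the interior case (local times constant before exiting $E_0$) is correct.
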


\begin{proof}
The proof is similar to the proof of Lemma \ref{subsol}, 
so we will only sketch the argument.
For $f\in {\cal D}$, let $x_0$ satisfy
$\sup_{x\in\bar {E}_0}(u_{+}-f)(x)=u_{+}(x_0)-f(x_0)$. By adding a constant to 
$f$ if necessary, we 
can assume that $u_{+}(x_0)-f(x_0)=0$.

With $\tau_{\epsilon}$ as in the proof of
Lemma \ref{subsol}, by Lemmas \ref{cres}, \ref{ctrans} 
and \ref{intrep}, and the compactness of $\Pi_{x_0}$ (Condition 
\ref{Hc}(c)), for some $P\in\Pi_{x_0}$ (independent of $\epsilon$) we have 
\begin{eqnarray*}
0\leq E^P\bigg[\int_0^{\tau_{\epsilon}}e^{-\lambda t}\left(h(X(t)
)-\lambda f(X(t))+Af(X(t))\right)dt\\
+\sum_{k=1}^m\int_0^{\tau_{\epsilon}}e^{-\lambda t}B_kf(X(t-))d\gamma_
k(t)\bigg],\end{eqnarray*}
Dividing the 
expectations by $E^P[\lambda^{-1}(1-e^{-\lambda\tau_{\epsilon}})+
\sum_{k=1}^m\int_0^{\tau_{\epsilon}}e^{-\lambda t}d\gamma_k(t)]$ and 
letting $\epsilon\rightarrow 0$, we must have 
\[0\leq (h(x_0)-\lambda f(x_0)+Af(x_0))\vee\max_{k:x_0\in\bar {E}_
k}B_kf(x_0),\]
which, since $f(x_0)=u_{+}(x_0)$, implies (\ref{bndry}), if 
$x_0\in\partial E_0$, and (\ref{interior}), if $x_0\in E_0$.
\end{proof}

\begin{corollary}\label{csol} 
Let $h\in C(\bar {E}_0)$.  If, in addition to Condition \ref{Hc},
the comparison principle holds for 
equation (\ref{creso}), then 
$u=u_{+}=u_{-}$ is the unique viscosity solution of equation 
(\ref{creso}).  
\end{corollary}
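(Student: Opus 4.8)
The plan is to mirror the proof of Corollary~\ref{sol}, substituting Lemma~\ref{csubsol} for Lemma~\ref{subsol} and the boundary-value equation~(\ref{creso}) for~(\ref{reso}). First I would invoke Lemma~\ref{csubsol}: under Condition~\ref{Hc}, the function $u_{+}$ defined by~(\ref{uplus}) is a viscosity subsolution of~(\ref{creso}) and the function $u_{-}$ defined by~(\ref{uminus}) is a viscosity supersolution of~(\ref{creso}). Since the comparison principle for~(\ref{creso}) is assumed, every subsolution is pointwise dominated by every supersolution, so in particular $u_{+}\le u_{-}$ on $\bar{E}_0$.

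Next I would note that the reverse inequality is immediate from the definitions. By Condition~\ref{Hc}(b) applied to $\mu=\delta_x$ we have $\Pi_x\neq\emptyset$, so
\[
u_{-}(x)=\inf_{P\in\Pi_x}E^P\Big[\int_0^{\infty}e^{-\lambda t}h(X(t))\,dt\Big]\le\sup_{P\in\Pi_x}E^P\Big[\int_0^{\infty}e^{-\lambda t}h(X(t))\,dt\Big]=u_{+}(x)
\]
for every $x\in\bar{E}_0$. Combining the two inequalities gives $u_{+}=u_{-}=:u$. Because $u_{+}$ is upper semicontinuous and $u_{-}$ is lower semicontinuous (the proof of Lemma~\ref{intrep} carries over with Condition~\ref{Hc} in place of Condition~\ref{H}), their common value $u$ is both upper and lower semicontinuous, hence continuous; and since $E$ is compact and $h\in C(\bar{E}_0)$, $u$ is bounded. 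Thus $u\in C(\bar{E}_0)$, and, equaling both the subsolution $u_{+}$ and the supersolution $u_{-}$, it is a viscosity solution of~(\ref{creso}).

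For uniqueness, let $v\in C(\bar{E}_0)$ be any viscosity solution of~(\ref{creso}). Then $v$ is simultaneously a subsolution and a supersolution, so the comparison principle gives $v\le u_{-}=u$ (comparing the subsolution $v$ with the supersolution $u_{-}$) and $u=u_{+}\le v$ (comparing the subsolution $u_{+}$ with the supersolution $v$); hence $v=u$.

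I do not anticipate a serious obstacle: the argument is a routine application of the comparison principle once Lemma~\ref{csubsol} is in hand. The only points needing a little care are the use of Condition~\ref{Hc}(b) to ensure $\Pi_x\neq\emptyset$, which makes the inequality $u_{-}\le u_{+}$ meaningful, and the observation that coincidence of the upper and lower semicontinuous functions $u_{+}$ and $u_{-}$ forces continuity, so that $u$ lies in $C(\bar{E}_0)$ as the statement requires.
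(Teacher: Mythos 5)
Your proof is correct and is exactly the routine argument the authors intend (the paper states both Corollary \ref{sol} and Corollary \ref{csol} without proof, as immediate consequences of Lemma \ref{csubsol}, the comparison principle, and the trivial inequality $u_-\le u_+$ on the nonempty sets $\Pi_x$). The additional observations you make --- that Condition \ref{Hc}(b) is what makes $u_-\le u_+$ meaningful, and that equality of the upper semicontinuous $u_+$ with the lower semicontinuous $u_-$ forces continuity --- are precisely the points left implicit in the paper.
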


The following theorem is the analog of Theorem \ref{mgpb}.

\begin{theorem}\label{cmgpb} 
Assume Condition \ref{Hc} holds.  For $\lambda >0$, let $M_{\lambda}$ 
be the set of $h\in C(\bar {E}_0)$ such that the comparison 
principle holds for (\ref{creso}).  If for every $\lambda >0$, 
$M_{\lambda}$ is separating, then the distribution of the 
solution $X$ of the 
constrained martingale problem for $(A,E_0;B_1,E_1;\ldots ;B_m,E_
m)$ 
is uniquely determined.  
\end{theorem}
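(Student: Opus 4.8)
The plan is to mimic the argument of Theorem~\ref{mgpb}, replacing the role of Lemma~\ref{subsol} by Lemma~\ref{csubsol} and the role of Lemma~\ref{resid}/\ref{ext} by their constrained analogs. First I would fix $\lambda>0$ and $h\in M_\lambda$. By Lemma~\ref{csubsol}, $u_+$ is a viscosity subsolution and $u_-$ a viscosity supersolution of \eqref{creso}; since $h\in M_\lambda$, the comparison principle gives $u_+\le u_-$, and the reverse inequality $u_-\le u_+$ is immediate from the definitions \eqref{uplus}--\eqref{uminus}. Hence $u:=u_+=u_-$, and by the constrained analog of Lemma~\ref{intrep} (valid under Condition~\ref{Hc}, as noted just before Lemma~\ref{csubsol}) we get, for any initial distribution $\mu$ and any two $P_1,P_2\in\Pi_\mu$,
\[
E^{P_1}\Big[\int_0^\infty e^{-\lambda t}h(X(t))\,dt\Big]
=\mu u
=E^{P_2}\Big[\int_0^\infty e^{-\lambda t}h(X(t))\,dt\Big].
\]

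Next I would invoke the extension idea of Remark~\ref{extunq}. The equality above, holding for every $\mu$ and every solution, is precisely the hypothesis \eqref{mgcond} of Lemma~\ref{ext} (whose proof, via Lemma~\ref{ctrans}, carries over to constrained martingale problems), so
\[
u(X(t))-\int_0^t\big(\lambda u(X(s))-h(X(s))\big)\,ds
\]
is an $\{\mathcal F_t^X\}$-martingale for every $P\in\Pi$. Doing this for every $h\in M_\lambda$ and every $\lambda>0$, and using linearity of the martingale property, I would define $\hat A$ as the linear span of $A$ together with all pairs $(u,\lambda u-h)$, $h\in M_\lambda$, $\lambda>0$. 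Here is where the constrained case genuinely differs from Theorem~\ref{mgpb}: the new generator $\hat A$ is an \emph{ordinary} (unconstrained) operator on $C(\bar E_0)$, and by construction every solution $X$ of the constrained martingale problem is a solution of the \emph{plain} martingale problem for $\hat A$ in $D_{\bar E_0}[0,\infty)$ (the boundary terms have disappeared because $u$ satisfies the boundary conditions in the viscosity sense and the local-time integrals drop out of \eqref{cresid} for the extended pairs). Since $\mathcal R(\lambda-\hat A)\supseteq M_\lambda$ is separating for each $\lambda>0$, Corollary~\ref{rngunq} applied to $\hat A$ yields that any two cadlag solutions of the martingale problem for $(\hat A,\mu)$ have the same distribution on $D_{\bar E_0}[0,\infty)$; in particular any two solutions of the constrained martingale problem with initial distribution $\mu$ have the same distribution, proving the theorem.

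I expect the main obstacle to be the bookkeeping in the extension step: one must check carefully that every solution of the \emph{constrained} problem really is a solution of the \emph{ordinary} martingale problem for $\hat A$ (i.e.\ that the local-time contributions genuinely vanish for each extended pair $(u,\lambda u-h)$), and that the sample paths lie in $D_{\bar E_0}[0,\infty)$ so that Corollary~\ref{rngunq} applies with $\bar E_0$ in place of $E$ — this is why the extension, merely convenient in the unconstrained case, is truly \emph{needed} here, as flagged in the introduction. The measurability/upper semicontinuity of $u_+$ and the passage from $\mu u$ back to the Laplace-transform identity are routine once Lemma~\ref{intrep} is in hand.
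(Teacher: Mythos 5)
Your overall strategy coincides with the paper's: show $u_{+}=u_{-}=:u$ via Lemma~\ref{csubsol} and the comparison principle, deduce from Lemma~\ref{intrep} and Lemma~\ref{ext} (whose proof carries over to the constrained setting via Lemma~\ref{ctrans}) that $u(X(t))-\int_0^t(\lambda u(X(s))-h(X(s)))\,ds$ is an $\{{\cal F}_t^X\}$-martingale for every $P\in\Pi$, collect these pairs into an operator $\hat{A}$ with ${\cal R}(\lambda -\hat {A})\supseteq M_{\lambda}$ separating, and conclude by Corollary~\ref{rngunq}. All of that is sound and is exactly how the paper proceeds.

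The gap is in your definition of $\hat{A}$ as ``the linear span of $A$ together with all pairs $(u,\lambda u-h)$.'' If $\hat{A}$ contains $(f,Af)$ for every $f\in {\cal D}$, then a solution of the \emph{constrained} martingale problem is in general \emph{not} a solution of the ordinary martingale problem for $\hat{A}$: by (\ref{cmg}), $f(X(t))-f(X(0))-\int_0^tAf(X(s))\,ds$ differs from a martingale by $\sum_{k=1}^m\int_0^tB_kf(X(s-))\,d\gamma_k(s)$, and this local-time term does not vanish unless $B_kf=0$ on $\bar{E}_k$ for all $k$. You do flag the need to check that ``the local-time contributions genuinely vanish,'' but you locate the danger in the extended pairs $(u,\lambda u-h)$ --- those are harmless, since Lemma~\ref{ext} delivers the plain martingale property for them with no boundary term at all; the problem is the \emph{original} pairs. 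The fix is what the paper does (and what Remark~\ref{0bdry} records): define $\hat{A}$ as the collection of \emph{all} pairs $(f,g)\in B(\bar{E}_0)\times B(\bar{E}_0)$ such that $f(X(t))-\int_0^tg(X(s))\,ds$ is an $\{{\cal F}_t^X\}$-martingale for every $P\in\Pi$. This $\hat{A}$ is automatically an extension of $A$ restricted to ${\cal D}_0=\{f\in {\cal D}:\,B_kf=0\mbox{ on }\bar{E}_k,\ k=1,\ldots ,m\}$ rather than of $A$ on all of ${\cal D}$, it contains every pair $(u,\lambda u-h)$ with $h\in M_{\lambda}$, and $\Pi_{\mu}\subseteq\hat{\Pi}_{\mu}$ holds by construction, so Corollary~\ref{rngunq} applies. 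Equivalently, simply omit $A$ restricted to ${\cal D}\setminus {\cal D}_0$ from your span; you never need it, since the pairs $(u,\lambda u-h)$ alone already make ${\cal R}(\lambda -\hat {A})\supseteq M_{\lambda}$ separating for each $\lambda>0$.
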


\begin{proof}\ 
The proof of this result is in the spirit of Remark 
\ref{extunq}. 
Let $\hat {A}$ be the collection of $(f,g)\in B(\bar {E}_0)\times 
B(\bar {E}_0)$ such that 
$f(X(t))-\int_0^tg(X(s))ds$ is a $\left\{{\cal F}^X_t\right\}$-martingale for all $
P\in\Pi$. 
Denote by $\hat{\Pi}$ the set of the distributions of solutions of 
the martingale problem for $\hat {A}$, and by $\hat{\Pi}_{\mu}$ the set of 
solutions with initial distribution $\mu$. Then, by construction, 
for each $\mu\in {\cal P}(\bar {E}_0)$, $\Pi_{\mu}\subseteq\hat{\Pi}_{
\mu}$.  By the comparison principle, 
Lemmas \ref{csubsol},  
\ref{intrep} and \ref{ext}, for each $h\in M_{\lambda}$ and 
$u=u_{+}=u_{-}$ given by (\ref{uplus}) (or equivalently, 
(\ref{uminus})), 
$(u,\lambda u-h)$ belongs to $\hat {A}$, or equivalently the pair $
(u,h)$ 
belongs to $\lambda -\hat {A}$.  Consequently ${\cal R}(\lambda -
\hat {A})\supseteq M_{\lambda}$ is separating 
and the thesis follows from Corollary \ref{rngunq}.  
\end{proof}

\begin{remark}\label{0bdry}
Differently from Remark \ref{extunq}, the operator $\hat {A}$ is not 
an extension of $A$ 
as an operator on the domain ${\cal D}$, but it is an extension of $
A$ 
restricted to the domain 
${\cal D}_0=\{f\in {\cal D}:\,B_kf(x)=0\,\,\,\forall x\in\bar {E}_
k,\,k=1,\cdots ,m\}$. 
The distribution of the solution $X$ of the 
constrained martingale problem for 
$(A,E_0;B_1,E_1;\ldots ;B_m,E_m,\mu )$ 
is uniquely determined even though the same might not 
hold for the solution of the
martingale problem for $(A\Big|_{{\cal D}_0},\mu )$. 
\end{remark}\medskip

\subsection{Sufficient conditions for the validity of 
Condition \ref{Hc}}\label{suffsect}

In what follows, we assume that $E-E_0=\cup_{k=1}^m\tilde {E}_k$, where 
the $\tilde {E}_k$ are disjoint Borel sets satisfying $\tilde {E}_
k\supset E_k$, 
$k=1,\ldots ,m$.

\begin{proposition}\label{suffHc} 
Assume Condition \ref{Hc}(a)  and that the following 
hold:  

\begin{itemize}

\item[i)] There exist linear operators 
$\tilde {A},\tilde {B}_1,\cdots ,\tilde {B}_m:\tilde {{\cal D}}\subseteq 
C(E)\rightarrow C(E)$ with $\tilde {{\cal D}}$ dense in $C(E)$, $
(1,0)\in\tilde {A}$, 
$(1,0)\in\tilde {B}_k$, $k=1,...,m$, that are extensions of $A,B_
1,\ldots ,B_m$ 
in the sense that for every $f\in {\cal D}$ there exists $\tilde {
f}\in\tilde {{\cal D}}$ such 
that $f=\left.\tilde f\right|_{\bar {E}_0}$, $Af=\left.\tilde A\tilde 
f\right|_{\bar {E}_0}$, and $B_kf=\left.\tilde B_k\tilde f\right|_{
\bar {E}_0}$, 
$k=1,...,m$, and such that the martingale problem for 
each of $\tilde {A},\tilde {B}_1,\cdots ,\tilde {B}_m$ with initial condition $
\delta_x$ has a 
solution for every $x\in E$.  

\item[ii)] If $E\neq\bar {E}_0$, there exists $\varphi\in\tilde {{\cal D}}$ such that 
$\varphi =0$ on $\bar {E}_0$, $\varphi >0$ on $E-\bar {E}_0$ and $
\tilde {A}\varphi =0$ on $\bar {E}_0$, $\tilde {B}_k\varphi\leq 0$ on 
$\overline {\tilde {E}_k}$, $k=1,...,m$. 

\item[iii)] There exists $\left\{\varphi_n\right\}$, $\varphi_n\in 
{\cal D}$, such that 
$\sup_{n,x}|\varphi_n(x)|<\infty$ and $B_k\varphi_n(x)\geq n$ on $
\bar {E}_k$ for all $k=1,...,m$.

\end{itemize}

Then b) c) and d) in Condition \ref{Hc} are verified. 

\end{proposition}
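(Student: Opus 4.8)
\emph{Overview of the plan.} I would prove (d) first: it is essentially immediate from hypothesis (iii) and Lemma \ref{cres}, and it simultaneously produces a bound on the discounted local times that is \emph{uniform} over $\Pi$. Then I would obtain (b) and (c) together by the ``patchwork'' construction for constrained martingale problems --- concatenate a process run according to the interior operator $\tilde A$ with processes run according to the boundary operators $\tilde B_k$, refine the concatenation, and pass to a weak limit. Hypothesis (i) supplies the building blocks on the compact space $E$; hypothesis (ii) keeps the approximating paths near $\bar E_0$; hypothesis (iii) prevents the boundary terms from exploding, and that is what underlies both existence and compactness.

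\emph{Proof of (d).} Fix $P\in\Pi$, a realization $(X,\gamma_1,\dots,\gamma_m)$ of a solution with law $P$, and $\lambda>0$. Set $C=\sup_{n,x}|\varphi_n(x)|<\infty$ and $\psi=\varphi_1+C$; since $1\in{\cal D}$ and $(1,0)\in A$, $(1,0)\in B_k$, we have $\psi\in{\cal D}$, $\psi\ge0$, $A\psi=A\varphi_1$, and $B_k\psi=B_k\varphi_1\ge1$ on $\bar E_k$ (hypothesis (iii) with $n=1$). First, $M_\psi$ is an integrable martingale and every term in $(\ref{cmg})$ with $f=\psi$ except the local-time term is bounded, so $E^P[\gamma_k(t)]<\infty$ for every $t$. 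Now apply Lemma \ref{cres} with $f=\psi$ and use $E^P[M_\psi^\lambda(t)]=E^P[M_\psi^\lambda(0)]=0$ to get
\[\sum_{k=1}^m E^P\!\Big[\int_0^t e^{-\lambda s}B_k\psi(X(s-))\,d\gamma_k(s)\Big]=E^P\big[e^{-\lambda t}\psi(X(t))-\psi(X(0))\big]+E^P\!\Big[\int_0^t e^{-\lambda s}(\lambda\psi-A\psi)(X(s))\,ds\Big].\]
Because each $\gamma_k$ increases only on $\{X(s-)\in\bar E_k\}$, where $B_k\psi\ge1$, the left side is at least $\sum_k E^P[\int_0^t e^{-\lambda s}\,d\gamma_k(s)]$, while the right side is at most $\|\psi\|+\lambda^{-1}(\lambda\|\psi\|+\|A\psi\|)$; letting $t\to\infty$ gives $\sum_k E^P[\int_0^\infty e^{-\lambda s}\,d\gamma_k(s)]\le 2\|\psi\|+\lambda^{-1}\|A\psi\|=:K_\lambda<\infty$, which is (d). Since $K_\lambda$ depends only on $\varphi_1$ and $\lambda$, this bound holds uniformly over $P\in\Pi$.

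\emph{Proof of (b) and (c).} By hypothesis (i) the martingale problems on the compact space $E$ for $\tilde A$ and for each $\tilde B_k$ are solvable from every point, and since $\tilde{\cal D}$ is dense in $C(E)$ the associated sets of solution laws on $D_E[0,\infty)$ are nonempty, compact, and depend measurably on the initial law (Chapter 4 of \cite{EK86}). Out of these I would build, for each $n$, an approximate constrained solution $(Y^{(n)},\gamma^{(n)}_1,\dots,\gamma^{(n)}_m)$ that runs the $\tilde A$-dynamics while the path is in $\bar E_0$ and inserts a short ``pushing'' phase generated by $\tilde B_k$ --- scaled so that the pushing is recorded by $\gamma^{(n)}_k$ --- whenever the path enters $\tilde E_k$, refining the switching mesh as $n\to\infty$; this is the patchwork construction of \cite{Kur90,Kur91} (cf.\ also \cite{KS01}). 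Hypothesis (ii) enters here: since $\varphi\ge0$, $\varphi=0$ on $\bar E_0$, $\tilde A\varphi=0$ on $\bar E_0$, and $\tilde B_k\varphi\le0$ on $\overline{\tilde E_k}$, the functional $\varphi(Y^{(n)}(\cdot))$ is controlled from above along the scheme, which forces the approximating paths toward $\bar E_0$ and makes excursions outside $\bar E_0$ transient, so that $Y^{(n)}$ is $\bar E_0$-valued in the limit. Hypothesis (iii), through exactly the estimate of part (d) applied to the $Y^{(n)}$, gives $\sup_n\sum_k E[\int_0^\infty e^{-\lambda s}\,d\gamma^{(n)}_k(s)]<\infty$, and this together with compactness of $E$ yields tightness of $\{(Y^{(n)},\gamma^{(n)}_1,\dots,\gamma^{(n)}_m)\}$. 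A weak limit point $(X,\gamma_1,\dots,\gamma_m)$ then has $X$ valued in $\bar E_0$, and --- passing to the limit in $(\ref{cmg})$, where the drift and local-time integrals converge because $Af$ and $B_kf$ are continuous and the $\gamma^{(n)}_k$ converge in $C[0,\infty)$, and the martingale property survives such limits --- it solves the constrained martingale problem for the given $\mu$; this is (b). Running the same tightness/limit argument on an arbitrary sequence drawn from $\Pi$ (now the uniform bound $K_\lambda$ of (d) is available outright) shows $\Pi$ is closed, and with the tightness of $\Pi$ --- again from $E$ compact plus the uniform local-time bound --- this gives compactness of $\Pi$, i.e.\ (c).

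\emph{Where the difficulty lies.} Part (d) is routine. The substance is the existence construction behind (b), and hence the tightness used in (c). The two delicate points are: making the concatenation/time change precise so that the limiting $\gamma_k$ are genuinely continuous, nondecreasing, and supported on $\bar E_k$; and checking that the weak limit is a bona fide solution of the constrained martingale problem --- that is, correctly identifying in the limit the absolutely continuous part (driven by $\tilde A|_{\bar E_0}=A$) and the singular boundary parts (driven by the $B_k$ against $d\gamma_k$) --- which is precisely where hypotheses (i)--(ii) and the established constrained-martingale-problem machinery are needed.
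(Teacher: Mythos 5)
Your proof of (d) is essentially the paper's: apply Lemma \ref{cres} to $\varphi_1$ (shifted by a constant so as to be nonnegative, which is harmless since $(1,0)\in A$ and $(1,0)\in B_k$), use $B_k\varphi_1\geq 1$ on $\bar E_k$ together with the fact that $\gamma_k$ increases only there, and bound the remaining terms by $\|\varphi_1\|$ and $\|A\varphi_1\|$. Your overall strategy for (b) and (c) is also the paper's: pass to the patchwork martingale problem of \cite{Kur90} (existence of which the paper gets from dissipativity of $\tilde A,\tilde B_k$ --- Lemma \ref{diss} plus Lemma 1.1 of \cite{Kur90} --- rather than by redoing the concatenation, but that is the same construction), use (ii) to show every patchwork solution started in $\bar E_0$ stays in $\bar E_0$, and use (iii) to convert back to the constrained problem.

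The genuine gap is in your compactness argument for (c) (and in the limit passage at the end of (b)). You claim tightness of $\{(X^n,\gamma_1^n,\dots,\gamma_m^n)\}$ and convergence of the $\gamma_k^n$ \emph{in} $C[0,\infty)$ from ``$E$ compact plus the uniform discounted local-time bound.'' That bound controls $E[\gamma_k^n(T)]$ but gives no modulus of continuity: the $\gamma_k^n$ are only monotone, a Helly-type limit can have atoms (local time piling up at a single instant), and then the limit is not a continuous local time and the limiting process is not a solution of the constrained martingale problem. Likewise, the standard tightness criteria for $X^n$ in $D_{\bar E_0}[0,\infty)$ (Theorems 3.9.1 and 3.9.4 of \cite{EK86}) require conditional control of the increments of $f(X^n)$ over short time intervals, and the boundary terms $\int B_kf\,d\gamma_k^n$ provide no such control. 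This is exactly why the paper does all the compactness at the level of the time-changed processes $(Y^n,\eta_0^n,\dots,\eta_m^n)$, where $\eta_0^n(t)+\eta_1^n(t)+\dots+\eta_m^n(t)=t$ so each $\eta_j^n$ is $1$-Lipschitz uniformly in $n$; relative compactness there is genuine, the limiting $\eta_j$ are automatically continuous, and hypothesis (iii) (via Lemmas 1.8 and 1.9 of \cite{Kur90}) is then used to show the limiting $\eta_0$ is strictly increasing and diverges, so that the time change can be inverted to produce a cadlag limit with continuous local times. You flag this as ``where the difficulty lies,'' but the mechanism you sketch does not close it; the detour through the patchwork time scale is not an optional refinement but the step that makes (c) (and the weak-limit identification in (b)) work.
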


\begin{proof}
{\bf Condition \ref{Hc}(b)}.  We will obtain a solution of the 
constrained martingale problem for 
$(A,E_0;B_1,E_1;\ldots ;B_m,E_m)$ by constructing a solution of the 
constrained martingale problem for 
$(\tilde {A},E_0;\tilde {B}_1,\tilde {E}_1;\ldots ;\tilde {B}_m,\tilde {
E}_m)$ and showing that any such 
solution that starts in $\bar {E}_0$ stays in $\bar {E}_0$ for all times.  
Following \cite{Kur90}, we will construct a solution of 
the constrained martingale problem for 
$(\tilde {A},E_0;\tilde {B}_1,\tilde {E}_1;\ldots ;\tilde {B}_m,\tilde {
E}_m)$ from a solution of the 
corresponding patchwork martingale problem. 

$\tilde {A},\tilde {B}_1,\cdots ,\tilde {B}_m$ are dissipative operators by i) and Lemma 
\ref{diss}. 
Then, by Lemma 1.1 in \cite{Kur90}, for each initial distribution 
on $E$, there exists a solution of the 
patchwork martingale problem for 
$(\tilde {A},E_0;\tilde {B}_1,\tilde {E}_1;\ldots ;\tilde {B}_m,\tilde {
E}_m)$. In addition, if $E\neq\bar {E}_0$, by ii) and the same 
argument used in the proof of Lemma 1.4 in 
\cite{Kur90}, for every solution $Y$ of 
the patchwork martingale problem for 
$(\tilde {A},E_0;\tilde {B}_1,\tilde {E}_1;\ldots ;\tilde {B}_m,\tilde {
E}_m)$ with initial distribution 
concentrated on $\bar {E}_0$,  $Y(t)\in\bar {E}_0$ for all $t\geq 
0$. 
Therefore $Y$ is also a solution of 
the patchwork martingale problem for 
$(A,E_0;B_1,E_1;\ldots ;B_m,E_m)$.
By iii) and Lemma 1.8, Lemma 1.9, 
Proposition 2.2, and Proposition 2.3 in \cite{Kur90}, 
from $Y$, a solution $X$ of the constrained 
martingale problem for $(A,E_0;B_1,E_1;\ldots ;B_m,E_m)$
can be constructed.

\vspace{.1in}

\noindent {\bf Condition \ref{Hc}(c)}.  If $X$ is a solution of the 
constrained martingale problem for 
$(A,E_0;$ $B_1,E_1;\ldots ;B_m,E_m)$ 
and $\gamma_1,\ldots ,\gamma_m$ are associated local 
times, then $\eta_0(t)=\inf\{s:\,s+\,\gamma_1(s)+\ldots +\gamma_m
(s)>t\}$ is strictly 
increasing and diverging to infinity as $t$ goes to infinity, 
with probability one, and $Y=X\circ\eta_0$ is a solution of the 
patchwork martingale problem for 
$(A,E_0;B_1,E_1;\ldots ;B_m,E_m)$, $\eta_0,\eta_1=\gamma_1\circ\eta_
0,\ldots ,\eta_m=\gamma_m\circ\eta_0$ are 
associated increasing processes (see the proof of 
Corollary 2.5 of \cite{Kur90}).  Let $\left\{(X^n,\gamma_1^n,\ldots 
,\gamma_m^n)\right\}$ be a 
sequence of solutions of the constrained martingale 
problem for $(A,E_0;B_1,E_1;\ldots ;B_m,E_m)$ with initial 
conditions $\left\{\mu^n\right\}$, $\mu^n\in {\cal P}(E)$, with associated local times.  
Since ${\cal P}(E)$ is compact, we may assume, without loss of 
generality, that $\left\{\mu^n\right\}$ converges to $\mu$.  Let 
$\left\{(Y^n,\eta_0^n,\eta_1^n,\ldots ,\eta_m^n)\right\}$ be the sequence of the corresponding 
solutions of the patchwork martingale problem and 
associated increasing processes.  Then by the density of ${\cal D}$ 
and Theorems 3.9.1 and 3.9.4 of \cite{EK86},
$\left\{(Y^n,\eta_0^n,\eta_1^n,\ldots ,\eta_m^n)\right\}$ is relatively compact under the 
Skorohod topology on 
$D_{E\times {\bf R}^{m+1}}[0,\infty )$. 

Let $\left\{(Y^{n_k},\eta_0^{n_k},\eta_1^{n_k},\ldots ,\eta_m^{n_
k})\right\}$ be a subsequence converging 
to a limit $(Y,\eta_0,\eta_1,\ldots ,\eta_m)$.  Then $Y$ is a solution of the 
patchwork martingale problem for $(A,E_0;B_1,E_1;\ldots ;B_m,E_m)$ 
with initial condition $\mu$ and $\eta_0,\eta_1,\ldots ,\eta_m$ are associated 
increasing processes.  By iii) and Lemma 1.8 and Lemma 1.9 
in \cite{Kur90}, $\eta_0$ is strictly increasing and diverging to 
infinity as $t$ goes to infinity, with probability one.  It 
follows that $\left\{(\eta_0^{n_k})^{-1}\right\}$ converges to $(
\eta_0)^{-1}$ and hence 
$\left\{(X^{n_k},\gamma_1^{n_k},\ldots ,\gamma_m^{n_k})\right\}=\left
\{(Y^{n_k}\circ (\eta_0^{n_k})^{-1},\eta_1^{n_k}\circ (\eta_0^{n_
k})^{-1},\ldots ,\eta_m^{n_k}\circ (\eta_0^{n_k})^{-1})\right\}$ 
converges to $(Y\circ (\eta_0)^{-1},\eta_1\circ (\eta_0)^{-1},\ldots 
,\eta_m\circ (\eta_0)^{-1})$ and 
$Y\circ (\eta_0)^{-1}$ with associated local times 
$\eta_1\circ (\eta_0)^{-1},\ldots ,\eta_m\circ (\eta_0)^{-1}$ is a solution of the constrained 
martingale problem for $(A,E_0;B_1,E_1;\ldots ;B_m,E_m)$ with 
initial condition $\mu$.  

\vspace{.1in}

\noindent {\bf Condition \ref{Hc}(d)}.  Let $\varphi_1$ be the function of 
iii) for $n=1$.  By Lemma \ref{cres} and iii) we have 
\[\begin{array}{l}E\Big[\sum_{k=1}^m\int_0^te^{-\lambda s}d\gamma_k(s)\Big]\\
\qquad\leq e^{-\lambda t}E[\varphi_1(X(t))]-E[\varphi_1(X(0))]+\int_0^te^{-\lambda 
s}E[\lambda\varphi_1(X(s))-A\varphi_1(X(s))]ds.
\end{array}\]
\end{proof}

\section{Examples}\label{examples} 

Several examples of application of the results of the 
previous sections can be given by exploiting comparison 
principles proved in the literature. Here we will discuss 
in detail two examples. 

The first example is a class of diffusion processes 
reflecting in a domain $D\subseteq {\bf R}^d$ according to an oblique direction 
of reflection which may become tangential. 
This case is not covered by the existing literature on reflecting 
diffusions, which assumes that the direction of reflection is 
uniformly bounded away from the tangent hyperplane. 

The second example is a large class of jump diffusion processes 
with jump component of 
unbounded variation and possibly degenerate diffusion 
matrix. In this case uniqueness results are already 
available in the literature (see e.g. \cite{IW89}, 
\cite{Gra92}, \cite{KP96}) but we believe it is 
still a good benchmark to show how our method works.

\subsection{Diffusions with degenerate oblique 
direction of reflection}\label{degref}

Let $D\subseteq {\bf R}^d$, $d\geq 2$, be a bounded domain with $
C^3$ boundary, 
i.e. 
\[\begin{array}{cc}
D=\{x\in {\bf R}^d:\,\psi (x)>0\},\quad\partial D=\{x\in {\bf R}^
d:\,\psi (x)=0\},\\
|\nabla\psi (x)|\,>0,\quad\mbox{\rm for }x\in\partial D,\end{array}
\]
for some function $\psi\in C^3({\bf R}^d)$, where $\nabla$ denotes the 
gradient, viewed as a row vector. Let 
$l:\bar {D}\rightarrow {\bf R}^d$ be a vector field in $C^2(\bar {
D})$ such that 
\begin{equation}|l(x)|>0\mbox{\rm \ and }\langle l(x),\nu (x)\rangle
\geq 0,\quad\forall x\in\partial D,\label{drdir1}\end{equation}
$\nu$ being the unit inward normal vector field, and let  
\begin{equation}\partial_0D=\{x\in\partial D:\,\langle l(x),\nu (
x)\rangle =0\}.\label{drdir2}\end{equation}
We assume that $\partial_0D$ has dimension $d-2$. More precisely, 
for $d\geq 3$, we assume that $\partial_0D$ has a finite number of connected 
components, each of the form 
\begin{equation}\{x\in\partial D:\,\psi (x)=0,\tilde{\psi }(x)=0\}
,\label{drdir3}\end{equation}
where $\psi$ is the function above and $\tilde{\psi}$ is another function 
in $C^2({\bf R}^d)$ such that the level set $\{x\in\partial D:\,\tilde{
\psi }(x)=0\}$ 
is bounded and $|\nabla\tilde{\psi }(x)|\,>0$ on it. 
For $d=2$, we assume that $\partial_0D$ consists of a finite 
number of points.  
In addition, we assume that $l(x)$ is never tangential to $\partial_
0D$.  

Our goal is to prove uniqueness of the reflecting 
diffusion process with generator of the form 
\begin{equation}Af(x)=\frac 12tr\left(D^2f(x)\sigma (x)\sigma (x)^
T\right)+\nabla f(x)b(x),\label{drop}\end{equation}
where $\sigma$ and $b$ are Lipschitz continuous 
functions on $\bar {D}$, and direction of reflection $l$.  
We will characterize this reflecting diffusion process as 
the unique solution of the constrained martingale 
problem for $(A,D;B,\partial D)$, where $A$ is given by 
(\ref{drop}), 
\begin{equation}Bf(x)=\nabla f(x)l(x),\label{drbop}\end{equation}
and the common domain of $A$ and $B$ is ${\cal D}=C^2(\overline D
)$. Our 
tools will be the results of Section \ref{cmgpunq} and the 
comparison principle proved  by
\cite{PK05}.

\begin{proposition}
Condition \ref{Hc} is verified. 
\end{proposition}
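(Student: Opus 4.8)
The plan is to verify the three nontrivial parts of Condition \ref{Hc} (items (b), (c), (d)) by checking the hypotheses of Proposition \ref{suffHc}; Condition \ref{Hc}(a) is immediate since ${\cal D}=C^2(\overline D)$ is dense in $C(\overline D)=C(\overline{E}_0)$ in the uniform topology (so here $\overline E_0=\overline D$ and $E=\overline D$, i.e.\ we are in the case $E=\overline E_0$, which makes hypothesis (ii) of Proposition \ref{suffHc} vacuous). Thus everything reduces to producing the extension operators of hypothesis (i) and the sequence of functions of hypothesis (iii).

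For hypothesis (i), since $E=\overline D$ there is only one boundary piece ($m=1$, $E_1=\partial D$), so I need a single extension pair $(\tilde A,\tilde B)$ on a dense domain $\tilde{\cal D}\subset C(\overline D)$, but this is trivial: take $\tilde{\cal D}={\cal D}=C^2(\overline D)$, $\tilde A=A$, $\tilde B=B$. What must be checked is that the (ordinary, unconstrained) martingale problems for $A$ and for $B$ each have a solution starting from every $\delta_x$, $x\in\overline D$. For $A$ given by \eqref{drop} with $\sigma,b$ Lipschitz on $\overline D$, a solution of the martingale problem for $(A,\delta_x)$ is obtained by extending $\sigma,b$ to Lipschitz functions on ${\bf R}^d$ and solving the corresponding SDE (weak existence suffices, and is classical under Lipschitz — indeed even continuous — coefficients); one then restricts, or rather notes that the martingale problem is posed on functions in $C^2(\overline D)$ and the ${\bf R}^d$-valued diffusion provides the required martingales for such $f$ after composing with a $C^2$ extension. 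For $B$, $Bf(x)=\nabla f(x)\,l(x)$ is a first-order operator with $l\in C^2(\overline D)$ Lipschitz, so the martingale problem for $(B,\delta_x)$ is solved by the deterministic flow of the ODE $\dot y=l(y)$ started at $x$ (again after extending $l$); this is the ``patchwork'' dynamics on the boundary. So hypothesis (i) holds.

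For hypothesis (iii), I need $\{\varphi_n\}\subset C^2(\overline D)$, uniformly bounded, with $B\varphi_n=\nabla\varphi_n\cdot l\geq n$ on $\partial D$. The natural candidate is built from the defining function $\psi$: on $\partial D$ one has $\nabla\psi(x)=|\nabla\psi(x)|\,\nu(x)$, hence $B\psi(x)=|\nabla\psi(x)|\langle\nu(x),l(x)\rangle$, which vanishes exactly on $\partial_0 D$, so $\psi$ itself will not do. The point is to flatten $\psi$ near $\partial D$: take $\varphi_n=\theta_n(\psi)$ for a suitable increasing $C^2$ function $\theta_n:{\bf R}\to{\bf R}$ with $\theta_n$ uniformly bounded but $\theta_n'(0)$ large; then $B\varphi_n(x)=\theta_n'(\psi(x))\,|\nabla\psi(x)|\langle\nu(x),l(x)\rangle$, which on $\partial D$ equals $\theta_n'(0)|\nabla\psi(x)|\langle\nu(x),l(x)\rangle$ and again vanishes on $\partial_0D$ — so scaling alone is not enough, and \textbf{this is the main obstacle}. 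The fix is to add a correction that is positive in the $l$-direction precisely where $\langle\nu,l\rangle$ degenerates: because $l$ is assumed never tangential to $\partial_0 D$ and $\partial_0 D$ is a codimension-$2$ submanifold cut out by $\{\psi=0,\tilde\psi=0\}$, the function $\langle l(x),\nabla\tilde\psi(x)^T\rangle$ does not vanish on $\partial_0 D$ (transversality of $l$ to the level set of $\tilde\psi$ within $\partial D$), and after possibly replacing $\tilde\psi$ by $-\tilde\psi$ on each component it is strictly positive there. One then sets, on a neighborhood of $\partial D$ and cut off smoothly,
\[
\varphi_n = \theta_n(\psi) + c\,\chi(\psi,\tilde\psi)\,\tilde\psi\,\psi\,\big(\text{mollified}\big),
\]
or more cleanly uses $\varphi_n = n\,\eta(\psi) + n\,\rho(\psi)\,\tilde\psi$ with $\eta,\rho$ bounded, $\eta'(0)=1$, $\rho(0)>0$, $\rho$ supported near $0$; a short computation gives $B\varphi_n = n(|\nabla\psi|\langle\nu,l\rangle + \rho(0)\langle l,\nabla\tilde\psi^T\rangle) + O(\text{terms vanishing as }\psi,\tilde\psi\to 0)$ on $\partial D$, and since the two leading terms cannot vanish simultaneously (the first is $>0$ off $\partial_0 D$, the second is $>0$ near $\partial_0 D$), by compactness $B\varphi_n\geq n$ on $\partial D$ for $\rho(0)$ chosen once and for all. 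The uniform boundedness of $\{\varphi_n\}$ fails for this ``$n$ times'' normalization — instead one keeps $\varphi_n$ bounded by composing with a bounded saturating function: replace $\psi$ by $\psi$ rescaled so that only a thin collar matters, i.e.\ use $\varphi_n=\beta_n(\psi,\tilde\psi)$ where $\beta_n$ is bounded uniformly in $n$ but has gradient of order $n$ concentrated in an $O(1/n)$-collar of $\partial D$, arranged so the same nondegeneracy of the pair $(\langle\nu,l\rangle,\langle l,\nabla\tilde\psi^T\rangle)$ forces $B\varphi_n\geq n$ on $\partial D$. With hypotheses (i), (ii) (vacuous), and (iii) of Proposition \ref{suffHc} in hand, that proposition yields Condition \ref{Hc}(b), (c), (d), and together with (a) this proves the proposition. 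I expect the delicate bookkeeping to be entirely in constructing $\varphi_n$ — balancing ``bounded in $n$'' against ``$B\varphi_n\geq n$'' — and in invoking the transversality hypothesis on $l$ and $\partial_0 D$ to control the region where $\langle\nu,l\rangle$ is small.
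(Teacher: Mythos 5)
There is a genuine gap in your verification of hypothesis (i) of Proposition \ref{suffHc}, and it comes from your choice of ambient space. You set $E=\bar E_0=\bar D$, which makes hypothesis (ii) vacuous and lets you take $\tilde A=A$, $\tilde B=B$. But Proposition \ref{suffHc}(i) requires the \emph{unconstrained} martingale problem for $\tilde A$ to have a solution for every $\delta_x$, $x\in E$, and in this framework a solution is a process with sample paths in $D_E[0,\infty)$, i.e.\ an $E$-valued process. With $E=\bar D$ this fails: a nondegenerate diffusion with generator $A$ started at an interior point of $D$ exits $\bar D$, and the ${\bf R}^d$-valued diffusion you invoke is not a $\bar D$-valued solution (stopping it at $\partial D$ destroys the martingale property, since $Af(X(s))$ does not vanish after the stopping time). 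The whole patchwork construction behind Proposition \ref{suffHc} needs room for the $A$-dynamics to leave $\bar E_0$ before the boundary operators push it back. The paper therefore takes $E=\overline{U(\bar D)}$, a closed $r$-neighborhood of $\bar D$, and extends $\sigma$, $b$, $l$ by composing with the normal projection and multiplying by a cutoff $(1-\chi(d(x,\partial D)))$ that vanishes well before the outer boundary of $U(\bar D)$; this keeps the extended processes inside the compact set $E$ and makes (i) hold. The price is that $E\neq\bar E_0$, so hypothesis (ii) is \emph{not} vacuous and must be verified — the paper does so with $\varphi(x)=\exp\{-1/d(x,\partial D)\}$ off $\bar D$, which guarantees that patchwork solutions started in $\bar D$ never leave $\bar D$. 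Your proposal omits both of these steps, and without them Proposition \ref{suffHc} cannot be applied.

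Your discussion of hypothesis (iii) is in the right spirit and matches the paper's idea — a bounded saturating function of $\psi$ with slope of order $n$ in an $O(1/n)$-collar, corrected by a term in $\tilde\psi$ that exploits $\nabla\tilde\psi\,l>0$ near $\partial_0D$ (after a sign choice) — but it is not actually carried out: your first candidate is, as you note, unbounded in $n$, and the final one is only described qualitatively. The paper's explicit choice is $\varphi_n=\chi_n(C_n\psi)+\tilde C_n\chi_n(\tilde\psi)$ with $|\chi_n|\le 1$, $\chi_n'(c)=n$ for $|c|\le 1/(2n)$, and constants $C_n,\tilde C_n$ calibrated so that on the strip $\partial_0^nD=\{x\in\partial D:|\tilde\psi(x)|<1/(2n)\}$ the $\tilde\psi$-term supplies the bound $n$ while off that strip the $\psi$-term does (using $\inf_{\partial D-\partial_0^nD}\nabla\psi\,l>0$), with the cross term controlled by the choice of $C_n$; the disconnected case of $\partial_0D$ is handled componentwise. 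You would need to supply these quantitative choices, but the real missing idea in your proposal is the enlargement of the state space.
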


\begin{proof}
Condition \ref{Hc}a) is obviously verified. Therefore we 
only need to prove that the assumptions of Proposition 
\ref{suffHc} are satisfied. 
Let $0<r<1$ be small enough that for $d(x,\partial D)<\frac 43r$  
the normal projection of $x$ on $\partial D$, $\pi_{\nu}(x)$, is well 
defined and $|\nabla\psi (x)|\,>0$. Set $U(\bar {D})=\left\{x:\,d(x,\bar 
D)<r\right\}$. 
Let $\chi (c)$ be a nondecreasing function in $C^{\infty}({\bf R}
)$ such that  
$0\leq\chi (c)\leq 1$, $\chi (c)=1$ for $c\geq\frac {2r}3$, $\chi 
(c)=0$ for $c\leq\frac r3$. 
We can extend $l$ to a Lipschitz continuous vector field on $\overline {
U(\bar {D})}$ 
by setting, for $x\in$$\overline {U(\bar {D})}-\bar {D}$, 
\[\begin{array}{cc}
l(x)=(1-\chi (d(x,\partial D)))\,l(\pi_{\nu}(x)).\end{array}
\]
We can also extend $\sigma$ and $b$ to Lipschitz continuous 
functions on $\overline {U(\bar {D})}$ by setting, for $x\in$$\overline {
U(\bar {D})}-\bar {D}$, 
\[\begin{array}{cc}
\sigma (x)=(1-\chi (d(x,\partial D)))\,\sigma (\pi_{\nu}(x)),\\
b(x)=(1-\chi (d(x,\partial D)))\,b(\pi_{\nu}(x)).\end{array}
\]
Clearly, both the martingale problem for $A$, with domain 
$C^2(\overline {U(\bar {D})})$, and the martingale problem for $B$, with the 
same domain, have a solution for every initial condition 
$\delta_x$, $x\in\overline {U(\bar {D})}$. Since every $f\in C^2(\bar 
D)$ 
can be extended to a function $\tilde {f}\in C^2(\overline {U(\bar {
D})})$ and 
\[Af=\left.\left(A\tilde f\right)\right|_{\bar {D}},\quad Bf=\left
.\left(B\tilde f\right)\right|_{\bar {D}},\]
Condition (i) in Proposition \ref{suffHc} is verified. 

Next, consider the function $\varphi$ defined as 
\[\varphi (x)\left\{\begin{array}{ll}
=0,&\mbox{\rm for }x\in\bar {D},\\
=\exp\{\frac {-1}{d(x,\partial D)}\},&\mbox{\rm for }x\in\overline {
U(\bar {D})}-\bar {D},\end{array}
\right.\]
where $U(\bar {D})$ is as above. Since $\partial D$ is of class $
C^3$, 
$\varphi\in C^2(\overline {U(\bar {D})})$. Moreover 
\[\nabla\varphi (x)=-\,|\nabla\varphi (x)|\,\nu (\pi_{\nu}(x)),\quad\mbox{\rm for }
x\in\overline {U(\bar {D})}-\bar {D}.\]
Therefore $\varphi$ satisfies Condition (ii) in Proposition \ref{suffHc}.

Finally, in order to verify iii) of Proposition 
\ref{suffHc}, we just need to modify slightly the proof of 
Lemma 3.1 in \cite{PK05}. Suppose first that $\partial_0D$ is 
connected. Let $\tilde{\psi}$ be the function in $(\ref{drdir3})$. Since 
$l(x)$ is never tangent to $\partial_0D$, it must hold $\nabla\tilde{
\psi }(x)l(x)\neq 0$ 
for each $x\in\partial_0D$, 
and hence, possibly replacing $\psi$ by $-\psi$, we can assume 
that 
\begin{equation}\tilde{\psi }(x)=0,\quad\nabla\tilde{\psi }(x)l(x
)>0,\quad\quad\forall\,x\in\partial_0D.\label{skD}\end{equation}
Let $U(\partial_0D)$ be a neighborhood of $\partial_0D$ such that 
$\inf_{\overline {U(\partial_0D)}}\nabla\tilde{\psi }(x)l(x)>0$, and for each $
n\in {\bf N}$, set 
\[\begin{array}{c}
\partial_0^nD=\Big\{x\in\partial D\cap\overline {U(\partial_0D)}:\,\,
|\tilde{\psi }(x)|<\frac 1{2n}\Big\},\\
\tilde {C}_n=\frac 1{\inf_{\partial_0^nD}\nabla\tilde{\psi }(x)l(x)},\\
C_n=\frac {\tilde {C}_n\sup_{\partial D}|\nabla\tilde{\psi }(x)l(x)|+1}{\inf_{
\partial D-\partial_0^nD}\nabla\psi (x)l(x)}.\end{array}
\]
Let $\chi_n$ be a function in $C^{\infty}({\bf R})$ such that $\chi_
n(c)=nc$ 
for $|c|\leq\frac 1{2n}$, $\chi_n(c)=-1$ for $c\leq -\frac 1n$, 
$\chi_n(c)=1$ for $c\geq\frac 1n$, $0\leq\chi_n'(c)\leq n$ for every $
c\in {\bf R}$, and define 
\[\varphi_n(x)=\chi_n(C_n\psi (x))+\tilde {C}_n\chi_n(\tilde{\psi }
(x)).\]
Then $|\varphi_n(x)|$ is bounded by $1+\frac 1{\inf_{\partial_0D}
\nabla\tilde{\psi }(x)l(x)}$ and we 
have, for $x\in\partial_0^nD$, 
\[\nabla\varphi_n(x)l(x)=n\Big[C_n\nabla\psi (x)l(x)+\tilde {C}_n\nabla\tilde{\psi }
(x)l(x)\Big]\geq n,\]
and for $x\in\partial D-\partial_0^nD$, 
\[\nabla\varphi_n(x)l(x)=nC_n\nabla\psi (x)l(x)+\tilde {C}_n\chi_n'(\tilde{
\psi }(x))\nabla\tilde{\psi }(x)l(x)\geq n.\]

If $\partial_0D$ is not connected, there is a function $\tilde{\psi}^
k$ 
satisfying $(\ref{skD})$ for each connected component 
$\partial_0^kD$. Let $U^k(\partial_0^kD)$ be neighborhoods such that 
$\inf_{\overline {U^k(\partial_0^kD)}}\nabla\tilde{\psi}^k(x)l(x)>0$. We can assume, without loss of generality, that 
$\overline {U^k(\partial_0^kD)}\subseteq V^k(\partial_0^kD)$, where $\overline {
V^k(\partial_0^kD)}$ are 
pairwise disjoint and $\tilde{\psi}^k$ vanishes outside $V^k(\partial_
0^kD)$. 
Then, defining $\partial_0^{k,n}D$ and $\tilde {C}_n^k$ as above,  
\[C_n^k=\frac {\tilde {C}_n^k\sup_{\partial D}|\nabla\tilde{\psi}^k(x)
l(x)|+1}{\inf_{\partial D-\cup_k\partial_0^{k,n}D}\nabla\psi (x)l(x)},\]
and $\varphi_n^k$ as above, $\varphi_n(x)=\sum_k\varphi_n^k(x)$ verifies 
iii) of Proposition \ref{suffHc}.
\end{proof}. 

Theorem 2.6 of \cite{PK05} gives the comparison 
principle for a class of linear and nonlinear equations 
that includes, in particular, the partial differential 
equation with boundary conditions 
\begin{equation}\begin{array}{ll}
\lambda u(x)-Au(x)=h(x),&\mbox{\rm in }D,\\
-Bu(x)=0,&\mbox{\rm on }\partial D,\end{array}
\label{drpde}\end{equation}
where $h$ is a Lipschitz continuous function, and $A$, $B$ are 
given by (\ref{drop}), (\ref{drbop}) and verify, in 
addition to the the assumptions formulated at the 
beginning of this section, the following local condition on 
$\partial_0D$.  

\begin{condition}\label{PK}\hfill

For every $x_0\in\partial_0D$, let $\phi$ be a $C^2$ diffeomorphism from 
the closure of a 
suitable neighborhood $V$ of the origin into the closure of a suitable 
neighborhood of $x_0$, $U(x_0)$, such that $\phi (0)=x_0$ and the 
$d\mbox{\rm th}$ column of $J\phi (z)$, $J_d\phi (z)$, satisfies 
\begin{equation}J_d\phi (z)=-l(\phi (z)),\quad\forall z\in\phi^{-
1}\left(\overline {\partial D\cap U(x_0)}\right).\label{drchc}\end{equation}
Let $\tilde {A}$, 
\[\tilde {A}f(z)=\frac 12tr\left(D^2f(z)\tilde\sigma (z)\tilde\sigma^
T(z)\right)+\nabla f(z)\tilde {b}(z),\]
be the operator such that 
\[\tilde {A}(f\circ\phi )(z)=Af(\phi (z)),\quad\forall z\in\phi^{
-1}\left(\overline {D\cap U(x_0)}\right).\]
Assume 
\begin{itemize}

\item[a)] $\tilde {b}^i$, $i=1,...,d-1$, is a function of the first $
d-1$ 
coordinates $(z^1,...,z^{d-1})$ only, and $\tilde {b}_d$ is a function of $
z_d$ 
only.  

\item[b)] $\tilde{\sigma}^{ij}$, $i=1,...,d-1$, $j=1,...,d$ is a function of the 
first $d-1$ coordinates $(z^1,...,z^{d-1})$ only.  
\end{itemize}
\end{condition}

\begin{remark}\label{drnorm}
For every $x_0\in\partial_0D$, some coordinate of $l(x_0)$, say the 
$d\mbox{\rm th}$ coordinate, must be nonzero. Then in 
(\ref{drchc}) we can choose $U(x_0)$ such that in $\overline {
U(x_0)}$ 
$l^d(x)\neq 0$ and we can replace $l(x)$ by $l(x)/|l^d(x)|$, 
since this normalization does not change the boundary 
condition of (\ref{drpde}) in $\overline {D\cap U(x_0)}$ (i.e. any 
viscosity sub/supersolution of (\ref{drpde}) in $\overline {D\cap 
U(x_0)}$ 
is a viscosity sub/supersolution of (\ref{drpde}) in $\overline {
D\cap U(x_0)}$ with 
the normalized vector field and conversely).  

Moreover, since (\ref{drchc}) must be verified only in 
$\phi^{-1}\left(\overline {\partial D\cap U(x_0)}\right)$, in the construction of $
\phi$ we can use any 
$C^2$ vector field $\bar {l}$ that agrees with $l$, or the above 
normalization of $l,$ on $\overline {\partial D\cap U(x_0)}$.  
\end{remark}\medskip

Therefore, whenever Condition \ref{PK} is satisfied 
Theorem \ref{cmgpb} applies and there exists one and only 
one diffusion process reflecting in $D$ according to the 
degenerate oblique direction of reflection $l$. 

The following is a concrete example 
where Condition \ref{PK} is satisfied.

\begin{example}
{\rm Let 
\[D=B_1(0)\subseteq {\bf R}^2.\]
and suppose the direction of reflection $l$ satisfies 
(\ref{drdir1}) with the strict inequality at 
every $x\in\partial D$ except at $x_0=(1,0)$, where 
\[l(1,0)=\left[\begin{array}{c}
0\\
-1\end{array}
\right].\]
Of course, in a neighborhood of $x_0=(1,0)$ we can always 
assume that $l$ depends only on the second coordinate $x_2$. 
In addition, by Remark \ref{drnorm}, we can suppose   
\[l_2(x)=-1.\]
Consider
\[\sigma (x)=\sigma =\left[\begin{array}{cc}
1&0\\
0&0\end{array}
\right].\]
and a drift $b$ that, in a neighborhood of $x_0=(1,0)$, 
depends only on the second coordinate, i.e. 
\[b(x)=b(x_2).\]
Assume that, in a neighborhood of $x_0=(1,0)$, the 
direction of reflection $l$ is parallel to $b$. 
Then we can find a nonlinear change of coordinates 
$\phi$ such that Condition \ref{PK} is verified, namely  
\[\begin{array}{c}
\phi_2(z)=z_2\\
\phi_1(z)=-\int_0^{z_2}l_1(\zeta_2)d\zeta_2+z_1+1,\end{array}
\]
which yields  
\[\tilde{\sigma }(z)=\sigma ,\quad\tilde {b}_1(z)=0,\quad\tilde {
b}_2(z)=b_2(z_2).\]

}
\end{example}

\subsection{Jump diffusions with degenerate diffusion 
matrix}\label{jd}.

Consider the operator 
\[Af=Lf+Jf\]
\begin{equation}Lf(x)=\frac 12tr\left(a(x)D^2f(x)\right)+\nabla f
(x)b(x)\label{JD-op}\end{equation}
\[Jf(x)=\int_{{\mathbb R}^{d'}-\{0\}}\left[f(x+\eta (x,z))-f(x)-\nabla 
f(x)\eta (x,z)I_{|z|<1}\right]m(dz),\]
where $\nabla is$ viewed as a row vector.
\noindent
Assume: 

\begin{condition}\label{JD-ex}\hfill 

\begin{itemize}

\item[a)] $a=\sigma\sigma^T$, $\sigma$ and $b$ are continuous.

\item[b)] $\eta (\cdot ,z)$ is continuous for every $z$, $\eta (x,
\cdot )$ is 
Borel measurable for every $x$, 

\noindent
$\sup_{|z|<1}|\eta (x,z)|<+\infty$ for 
every $x$ and 
\[|\eta (x,z)|I_{|z|<1}\leq\rho (z)(1+|x|)I_{|z|<1},\]
for some positive, measurable function $\rho$ such that 
$\lim_{|z|\rightarrow 0}\rho (z)=0$. 

\item[c)] $m$ is a Borel measure such that 
\[\int_{{\mathbb R}^{d'}-\{0\}}\left[\rho (z)^2I_{|z|<1}+I_{|z|\geq 
1}\right]m(dz)<+\infty .\]
\end{itemize}

\end{condition}

\noindent
Then, with ${\cal D}(A)=\{f+c:f\in C^2_c({\mathbb R}^d),c\in {\mathbb R}
\}$, 
$A\subset C_b(E)\times C_b(E)$ and $A$ satisfies Condition \ref{H}.

A comparison principle for bounded subsolutions and 
supersolutions of the equation $(\ref{reso})$ 
when $A$ is given by $(\ref{JD-op})$ is proven in 
\cite{JK06}, as a special case of a more general result, 
under the following assumptions:

\begin{condition}\label{JD-comp}\hfill 

\begin{itemize}
\item[a)] $\sigma$ and $b$ are Lipschitz continuous.

\item[b)]
\[|\eta (x,z)-\eta (y,z)|I_{|z|<1}\leq\rho (z)|x-y|I_{|z|<1}.\]
\item[c)] $h$ is uniformly continuous.
\end{itemize}

\end{condition}

Then, under the above assumptions, our result of Theorem 
\ref{mgpb} applies and uniqueness of the solution of the 
martingale problem for $A$ is granted.







\end{document}